\documentclass[12pt]{article}
\usepackage{amsthm, amsmath, amssymb, enumerate}
\usepackage{fullpage}
\usepackage{stfloats}
\usepackage[colorlinks=true]{hyperref}
\usepackage{xcolor}
\usepackage{tikz}
\usetikzlibrary{positioning}
\usepackage[all]{xy}
\usepackage{esint}

\begin{document}
\title{Modular Heights of Unitary Shimura Varieties II: Arithmetic Generating Series of Divisors}
\author{Ziqi Guo}
\maketitle

\theoremstyle{plain}
\newtheorem{thm}{Theorem}[section]
\newtheorem{theorem}[thm]{Theorem}
\newtheorem{cor}[thm]{Corollary}
\newtheorem{corollary}[thm]{Corollary}
\newtheorem{lem}[thm]{Lemma}
\newtheorem{lemma}[thm]{Lemma}
\newtheorem{pro}[thm]{Proposition}
\newtheorem{proposition}[thm]{Proposition}
\newtheorem{prop}[thm]{Proposition}
\newtheorem{definition}[thm]{Definition}
\newtheorem{assumption}[thm]{Assumption}
\def\avint{\mathop{\,\rlap{-}\!\!\int}\nolimits}

\theoremstyle{remark}
\newtheorem{remark}[thm]{Remark}
\newtheorem{example}[thm]{Example}
\newtheorem{remarks}[thm]{Remarks}
\newtheorem{problem}[thm]{Problem}
\newtheorem{exercise}[thm]{Exercise}
\newtheorem{situation}[thm]{Situation}
\newtheorem{acknowledgment}[thm]{Acknowledgment}

\numberwithin{equation}{subsection}

\newcommand{\ZZ}{\mathbb{Z}}
\newcommand{\CC}{\mathbb{C}}
\newcommand{\QQ}{\mathbb{Q}}
\newcommand{\RR}{\mathbb{R}}
\newcommand{\HH}{\mathcal{H}}     

\newcommand{\ad}{\mathrm{ad}}            
\newcommand{\NT}{\mathrm{NT}}
\newcommand{\nonsplit}{\mathrm{nonsplit}}
\newcommand{\Pet}{\mathrm{Pet}}
\newcommand{\Fal}{\mathrm{Fal}}
\newcommand{\Af}{\mathbb{A}_f}

\newcommand{\cs}{{\mathrm{cs}}}

\newcommand{\XU}{X_U}
\newcommand{\Fn}{F_v}
\newcommand{\LU}{L_U}
\newcommand{\LL}{\overline{\mathcal{L}}}
\newcommand{\OF}{\mathcal{O}_F}
\renewcommand{\OE}{\mathcal{O}_E}
\newcommand{\XXU}{\mathcal{X}_U}
\newcommand{\OA}{\underline{\Omega}_\mathcal{A}}
\newcommand{\OU}{\Omega_{\mathcal{X}_U/\mathbb{Z}[\frac{1}{n}]}}
\newcommand{\WA}{\underline{\omega}_\mathcal{A}}
\newcommand{\WU}{\omega_{\mathcal{X}_U/\mathbb{Z}[\frac{1}{n}]}}
\newcommand{\HHom}{\mathcal{H}\mathrm{om}}

\newcommand{\pair}[1]{\langle {#1} \rangle}
\newcommand{\wpair}[1]{\left\{{#1}\right\}}
\newcommand{\wh}{\widehat}
\newcommand{\wt}{\widetilde}

\newcommand\Spf{\mathrm{Spf}}

\newcommand{\lra}{{\longrightarrow}}

\newcommand{\matrixx}[4]
{\left( \begin{array}{cc}
  #1 &  #2  \\
  #3 &  #4  \\
 \end{array}\right)}        


\newcommand{\BA}{{\mathbb {A}}}
\newcommand{\BB}{{\mathbb {B}}}
\newcommand{\BC}{{\mathbb {C}}}
\newcommand{\BD}{{\mathbb {D}}}
\newcommand{\BE}{{\mathbb {E}}}
\newcommand{\BF}{{\mathbb {F}}}
\newcommand{\BG}{{\mathbb {G}}}
\newcommand{\BH}{{\mathbb {H}}}
\newcommand{\BI}{{\mathbb {I}}}
\newcommand{\BJ}{{\mathbb {J}}}
\newcommand{\BK}{{\mathbb {K}}}
\newcommand{\BL}{{\mathbb {L}}}
\newcommand{\BM}{{\mathbb {M}}}
\newcommand{\BN}{{\mathbb {N}}}
\newcommand{\BO}{{\mathbb {O}}}
\newcommand{\BP}{{\mathbb {P}}}
\newcommand{\BQ}{{\mathbb {Q}}}
\newcommand{\BR}{{\mathbb {R}}}
\newcommand{\BS}{{\mathbb {S}}}
\newcommand{\BT}{{\mathbb {T}}}
\newcommand{\BU}{{\mathbb {U}}}
\newcommand{\BV}{{\mathbb {V}}}
\newcommand{\BW}{{\mathbb {W}}}
\newcommand{\BX}{{\mathbb {X}}}
\newcommand{\BY}{{\mathbb {Y}}}
\newcommand{\BZ}{{\mathbb {Z}}}

\newcommand{\CA}{{\mathcal {A}}}
\newcommand{\CB}{{\mathcal {B}}}
\newcommand{\CD}{{\mathcal{D}}}
\newcommand{\CE}{{\mathcal {E}}}
\newcommand{\CF}{{\mathcal {F}}}
\newcommand{\CG}{{\mathcal {G}}}
\newcommand{\CH}{{\mathcal {H}}}
\newcommand{\CI}{{\mathcal {I}}}
\newcommand{\CJ}{{\mathcal {J}}}
\newcommand{\CK}{{\mathcal {K}}}
\newcommand{\CL}{{\mathcal {L}}}
\newcommand{\CM}{{\mathcal {M}}}
\newcommand{\CN}{{\mathcal {N}}}
\newcommand{\CO}{{\mathcal {O}}}
\newcommand{\CP}{{\mathcal {P}}}
\newcommand{\CQ}{{\mathcal {Q}}}
\newcommand{\CR }{{\mathcal {R}}}
\newcommand{\CS}{{\mathcal {S}}}
\newcommand{\CT}{{\mathcal {T}}}
\newcommand{\CU}{{\mathcal {U}}}
\newcommand{\CV}{{\mathcal {V}}}
\newcommand{\CW}{{\mathcal {W}}}
\newcommand{\CX}{{\mathcal {X}}}
\newcommand{\CY}{{\mathcal {Y}}}
\newcommand{\CZ}{{\mathcal {Z}}}

\newcommand{\ab}{{\mathrm{ab}}}
\newcommand{\Ad}{{\mathrm{Ad}}}
\newcommand{\an}{{\mathrm{an}}}
\newcommand{\Aut}{{\mathrm{Aut}}}

\newcommand{\Br}{{\mathrm{Br}}}
\newcommand{\bs}{\backslash}
\newcommand{\bbs}{\|\cdot\|}

\newcommand{\Ch}{{\mathrm{Ch}}}
\newcommand{\cod}{{\mathrm{cod}}}
\newcommand{\cont}{{\mathrm{cont}}}
\newcommand{\cl}{{\mathrm{cl}}}
\newcommand{\criso}{{\mathrm{criso}}}
\newcommand{\de}{{\mathrm{d}}}
\newcommand{\dR}{{\mathrm{dR}}}
\newcommand{\df}{\mathrm{det}^*}
\newcommand{\disc}{{\mathrm{disc}}}
\newcommand{\Div}{{\mathrm{Div}}}
\renewcommand{\div}{{\mathrm{div}}}
\newcommand{\Dh}{\widehat{\mathcal{D}}}
\newcommand{\Ei}{\mathrm{Ei}}
\newcommand{\Eis}{{\mathrm{Eis}}}
\newcommand{\End}{{\mathrm{End}}}
\newcommand{\Fil}{\mathrm{Fil}}
\newcommand{\Frob}{{\mathrm{Frob}}}

\newcommand{\Gal}{{\mathrm{Gal}}}
\newcommand{\GL}{{\mathrm{GL}}}
\newcommand{\GO}{{\mathrm{GO}}}
\newcommand{\GSO}{{\mathrm{GSO}}}
\newcommand{\GSp}{{\mathrm{GSp}}}
\newcommand{\GSpin}{{\mathrm{GSpin}}}
\newcommand{\GU}{{\mathrm{GU}}}
\newcommand{\BGU}{{\mathbb{GU}}}

\newcommand{\Has}{\mathrm{hasse}}
\newcommand{\Hom}{{\mathrm{Hom}}}
\newcommand{\Hol}{{\mathrm{Hol}}}
\newcommand{\HC}{{\mathrm{HC}}}
\newcommand{\id}{\mathrm{id}}
\newcommand{\Img}{{\mathrm{Im}}}
\newcommand{\Ind}{{\mathrm{Ind}}}
\newcommand{\ine}{\mathrm{ine}}
\newcommand{\inv}{{\mathrm{inv}}}
\newcommand{\Isom}{{\mathrm{Isom}}}

\newcommand{\Jac}{{\mathrm{Jac}}}
\newcommand{\JL}{{\mathrm{JL}}}

\newcommand{\Ker}{{\mathrm{Ker}}}
\newcommand{\KS}{{\mathrm{KS}}}

\newcommand{\Lie}{{\mathrm{Lie}}}
\renewcommand{\mod}{\mathrm{mod}}
\newcommand{\mm}{\mathfrak{m}}
\newcommand{\new}{{\mathrm{new}}}
\newcommand{\Nm}{\mathrm{Nm}}
\newcommand{\NS}{{\mathrm{NS}}}

\newcommand{\ord}{{\mathrm{ord}}}
\newcommand{\ol}{\overline}
\newcommand{\otf}{\otimes^*}
\newcommand{\rank}{{\mathrm{rank}}}

\newcommand{\PGL}{{\mathrm{PGL}}}
\newcommand{\PSL}{{\mathrm{PSL}}}
\newcommand{\Pic}{\mathrm{Pic}}
\newcommand{\Prep}{\mathrm{Prep}}
\newcommand{\Proj}{\mathrm{Proj}}
\renewcommand{\Pr}{\mathcal{P}r}
\newcommand{\Picc}{\mathcal{P}ic}

\newcommand{\ram}{\mathrm{ram}}
\renewcommand{\Re}{{\mathrm{Re}}}
\newcommand{\Res}{{\mathrm{Res}}}
\newcommand{\red}{{\mathrm{red}}}
\newcommand{\reg}{{\mathrm{reg}}}
\newcommand{\sm}{{\mathrm{sm}}}
\newcommand{\sing}{{\mathrm{sing}}}
\newcommand{\SL}{\mathrm{SL}}
\newcommand{\SLL}{\widetilde{\mathrm{SL}}}
\newcommand{\SO}{\mathrm{SO}}
\newcommand{\Sp}{\mathrm{Sp}}
\newcommand{\spl}{\mathrm{spl}}
\newcommand{\Sym}{{\mathrm{Sym}}}
\newcommand{\Spec}{\mathrm{Spec}}
\renewcommand{\ss}{\mathrm{ss}}
\newcommand{\tor}{{\mathrm{tor}}}
\newcommand{\tr}{{\mathrm{tr}}}

\newcommand{\ur}{{\mathrm{ur}}}
\newcommand{\U}{\mathrm{U}}
\newcommand{\UU}{\mathrm{U}(1,1)}
\newcommand{\vol}{{\mathrm{vol}}}

\newcommand{\ds}{\displaystyle}

\begin{abstract}
   This is the second of a series of three papers, in which we prove a formula expressing the modular height of a unitary Shimura variety over a CM number field in terms of the logarithmic derivative of the Hecke L-function associated with the CM extension. The main idea of our proof is to compare the holomorphic projection of the derivative of a certain mixed Eisenstein-theta series and the arithmetic degree of a generating series of divisors on unitary Shimura varieties.

   In this paper, we define the arithmetic generating series of divisors on unitary Shimura varieties, compute the corresponding arithmetic intersection numbers, and derive the modular height formula for unitary Shimura curves as well as the height formula for a CM point on them.
\end{abstract}

\tableofcontents

\section{Introduction}\label{introduction}
The goal of this series of three papers (\cite{Guo1}, \cite{Guo2} and the current one) is to prove a formula expressing the modular height of a unitary Shimura variety over a CM field in terms of the logarithmic derivative of the Hecke L-function associated with the CM extension. Our work can be viewed as an extension of X. Yuan's work \cite{Yuan1}, which is based on the work Yuan–Zhang–Zhang \cite{YZZ2} on the Gross–Zagier formula, and the work Yuan–Zhang \cite{YZ1} on the averaged Colmez conjecture. All these works are in turn inspired by the pioneering work Gross–Zagier \cite{GZ} and some philosophies of Kudla’s program \cite{Kud1,Kud2,Kud3,Kud4}. This series of works all aim to calculate the arithmetic invariants of Shimura varieties using special values of L-functions.

In our work, we will study the generating series of divisors on unitary Shimura varieties and their arithmetic versions, comparing them with the derivative of mixed theta-Eisenstein series. Through a series of specific and intricate calculations, we will provide a precise formula for the modular height. For a complete introduction to our work, we refer to the introduction to the third paper \cite{Guo2} in this series.

This is the second paper in this series. Its goal is to provide ingredients on the ``arithmetic side" needed for our proof. More precisely, we begin with the relevant unitary, unitary-similitude, and RSZ Shimura data and with the RSZ integral models used for arithmetic geometry, and then define the generating series of divisors on them together with their arithmetic counterparts. By taking suitable arithmetic intersection numbers, we obtain an automorphic form, which we call the height series. We will complete a portion of the computations in the height series and compare them with the conclusions in \cite{Guo1}, thereby obtaining a partial arithmetic Siegel–Weil formula.

At the same time, we will also use the exceptional correspondence between Shimura curves to derive the modular height formula for unitary Shimura curves, as well as the height formula for a CM point on them. These two formulas will serve as the inductive foundation for our proof of the modular height formula in general dimensions.

To save space and avoid repetition, most of the notations and terminologies used in this series of papers can be found in \cite[Sec 2.1, 4.1]{Guo1}, and will not be defined in this paper and \cite{Guo2}.

\subsection{Modular heights of CM points}\label{point introduction}

Throughout this series of papers, we always fix a totally real field $F$ of degree $d=[F:\QQ]$ with a fixed infinite place $\iota$. Let $E/F$ be a fixed CM extension, i.e., a totally imaginary quadratic extension of $F$. Under a fixed embedding from $\RR$ to $\CC$, we can also view $\iota$ as a place of $E$. Let $\BV$ be a totally positive definite \textit{incoherent Hermitian space} over $\BA_E$ of dimension $n+1$ with $n>0$, i.e., there does not exist any Hermitian space $V$ over $E$ such that $V\otimes_E \BA_E=\BV$. We also define $V$ to be the \textit{nearby coherent Hermitian space} with respect to $\iota$, i.e., $V$ has signature $(n,1)$ at $\iota$ and $(n+1,0)$ at all other archimedean places.

Let $G=\Res_{F/\QQ}\U(V)$ be a reductive group over $\QQ$, where $\U(V)$ is the unitary group of the Hermitian space $V$. The Hermitian symmetric domain $D$ is defined as follows:
\begin{equation*}
    D=\{z\in\BP(V_{\iota,\CC})\big|q(z)<0\}.
\end{equation*}
Here $q(z)$ is the Hermitian norm of the vector $z$. It is connected and carries a $\U(V_\CC)$-invariant complex structure.

Choose a certain Hermitian lattice $\Lambda\subset V$, which is self-dual at each place of $E$ unramified in $E/F$, and is $\varpi_{E_v}$-modular or almost $\varpi_{E_v}$-modular at each place of $E$ ramified in $E/F$. These definitions are introduced in \cite[Section 2.1, Def 2.2]{Guo1}. Let $U$ be an open compact subgroup of $G(\wh{\QQ})$. We require $U$ to be \textit{maximal} at each finite place $v$, i.e, $U_v\subset\U(V(E_v))$ is the stabilizer of $\Lambda(\mathcal{O}_{E_v})$. Then we can define a \textit{unitary Shimura variety} $X_U$ over the reflex field $E$, whose $(\CC,\iota)$-points are given by
\begin{equation*}
    X_U(\CC)=G(\QQ)\backslash D\times G(\wh{\QQ})/U.
\end{equation*}
It is smooth over $E$ of dimension $n$.

Under the above complex uniformization, let $L_D$ be the tautological line bundle on $D$, i.e., for each $z\in D$, the fiber $L_{D,z}$ is isomorphic to $\CC z$. This line bundle carries a natural Hermitian metric $h_{L_D}$ such that
\begin{equation*}
    h_{L_D}(s_z)=-q(s_z),
\end{equation*}
for $s_z\in L_{D,z}\cong \CC z$, $z\in D$, which is equivariant under the action of $\U(V_\CC)$. Let $\LU$ be the descent of $L_D\times 1_{G(\wh{\QQ})/U}$, this is an ample line bundle on $\XU$ with $\QQ$-coefficients. We sometimes call $L_U$ the \textit{Hodge bundle} of $X_U$.

For arithmetic purposes we do not choose a separate smooth integral model of the non-PEL variety $X_U$.  We pull $X_U$ back to the common RSZ reflex field $\wt E$, work on the PEL-type RSZ integral model and its relative local descriptions, and divide all degrees by the degree of the auxiliary toric factor and by $[\wt E:E]$.  The resulting normalized arithmetic Hodge class and normalized intersections are denoted by $\LL_U$ and by the same symbols $\mathcal X_U$, $\mathcal P$, and $\mathcal Z_t$ used below.  This is a normalization convention, not an assertion that $X_U$ itself possesses a global smooth PEL model over $\mathcal O_E$.

In order to define a CM point, consider an orthogonal decomposition
\begin{equation*}
    \BV=\BW\oplus W^\perp(\BA_E),
\end{equation*}
where $\BW$ is an incoherent totally positive definite Hermitian subspace of dimension 1, such that the Hermitian determinants of $\BV$ and $\BW$ are the same. Then the orthogonal complement $W^\perp$ is coherent.

Denote $U_\BW=U\cap\U(\BW_{\Af})$. There is a natural morphism $X_{\BW,U_{\BW}}\rightarrow X_U$, where $X_{\BW,U_{\BW}}$ is the zero-dimensional Shimura variety attached to $\BW$.  We denote by $P_{\BW,U}\in\Ch_0(X_U)_\QQ$ its degree-one normalization.  Its arithmetic realization is the normalized CM cycle on the RSZ model induced by the corresponding morphism of RSZ Shimura data; at a finite place it is represented by the associated CM cycle in the relative Rapoport--Zink uniformization.  We denote this normalized arithmetic cycle by $\mathcal P_{\BW,U}$.  Under the complex uniformization, its generic fiber is represented by

\begin{equation*}
    P_\CC=[(e),1]\in G(\QQ)\backslash D\times G(\wh{\QQ})/U.
\end{equation*}
Here $(e)$ represents the negative line given by $e=\prod_v e_v$, and $e$ is the generator of $\BW$. For convenience, we usually use $\mathcal{P}$ for $\mathcal{P}_{\BW,U}$.

In this paper, we will prove the following formula.
\begin{theorem}\label{Modular height of CM point}
    Denote by $\eta$ the quadratic character associated with $E/F$, and let $L_f(\cdot,\eta)$ denote the product of all finite local Hecke $L$-factors, consistently with \cite[Section~2.1]{Guo1}. We have
    \begin{equation*}
        \LL\cdot\mathcal{P}=-\frac{L'_f(0,\eta)}{L_f(0,\eta)}+\frac{1}{2}\log\frac{1}{d_{E/F}}.
    \end{equation*}
    Here $d_{E/F}$ is the norm of the relative discriminant of $E/F$.
\end{theorem}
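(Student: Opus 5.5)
The plan is to reduce the height of the CM point to the Faltings height of a CM abelian variety, and then to evaluate that via the Colmez-type formula for abelian varieties with CM by $E$ of a near-ordinary CM type, which is known through the averaged Colmez conjecture of Yuan--Zhang \cite{YZ1}.

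\textbf{Step 1 (moduli interpretation).} On the RSZ integral model, the point $\mathcal P$ corresponds to a triple $(A_0,A,\iota,\lambda)$. Here $A_0$ is the auxiliary CM abelian variety attached to the toric factor. The abelian variety $A$ splits up to isogeny according to $\BV=\BW\oplus W^\perp$:
\begin{itemize}
\item a piece with CM by $E$ coming from $\BW$;
\item pieces coming from $W^\perp$, which is coherent and positive definite at every place, and hence of signature $(n+1,0)$ at $\iota$ after the sign change.
\end{itemize}
Since the point is the negative line $(e)$, the Hodge bundle fiber $\LL|_{\mathcal P}$ is identified with $\Lie$ (or its dual) of the $\BW$-isotypic part of $A$, twisted by $\Lie(A_0)$. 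The metric $-q$ matches the Hodge metric up to a factor $4\pi$ or $2\pi$. This gives
\[
\LL\cdot\mathcal P = h_{\Fal}(\text{CM type }\Phi_\iota) - h_{\Fal}(\text{CM type }\Phi)+c,
\]
with the following ingredients:
\begin{itemize}
\item $\Phi_\iota$ is the CM type of $E$ differing from a fixed type $\Phi$ exactly at $\iota$;
\item $c$ is an explicit constant coming from the archimedean metric normalization and from the local lattice structure.
\end{itemize}
The normalization by $[\wt E:E]$ and by the toric degree removes the contribution of $A_0$.

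\textbf{Step 2 (finite places).} At each finite place I will compute the discrepancy between $\Lie$ of the Néron model and the lattice $\Lambda$. At split and inert places the lattice is self-dual, so there is no contribution. At ramified places, a $\varpi_{E_v}$-modular or almost modular lattice introduces an index. I expect the total of these indices to produce the term $\frac12\log d_{E/F}^{-1}$, up to the sign convention.

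\textbf{Step 3 (Colmez difference).} The difference of Faltings heights of two CM types differing at one place is given by Colmez's formula, which here is proven via \cite{YZ1}. In the form of Yuan's unitary-curve formula, it reads
\[
h(\Phi_\iota)-h(\Phi) = -\frac{L_f'(0,\eta)}{L_f(0,\eta)} + \text{(discriminant terms)}.
\]
Combining this with Step 2 should leave exactly $\frac12\log(1/d_{E/F})$.

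\textbf{Alternative route.} One could also use the exceptional isomorphism $\U(1,1)\sim$ quaternionic curves. This reduces to Yuan's result on Shimura curves over $F$, pulling the CM point back to a CM point of the quaternionic Shimura curve, and gives an independent check of the constant.

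\textbf{Main obstacle.} I expect the hardest part to be the bookkeeping of constants:
\begin{itemize}
\item matching the metric normalization of $\LL$ with the Faltings metric, i.e. tracking powers of $2\pi$ and the sign conventions of the archimedean contribution;
\item matching the local contributions at ramified places, which depend on modular versus almost modular lattices, with the discriminant term in Colmez's formula.
\end{itemize}
My first approach will be to test the formula on the case $F=\QQ$, where $\mathcal P$ is a Heegner-type point on a modular or Shimura curve and everything is classical (Chowla--Selberg). I will then extend place by place, since all corrections are local.
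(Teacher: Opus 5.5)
\textbf{Step 1 is wrong, and Step 3 then relies on an input that does not exist.} The Hodge line is $\wt{\mathcal L}^{-1}=\Omega_{\mathcal A}=\HHom(\Fil(\mathcal A_0),\omega_{\mathcal A})$. The polarization identifies $\Fil(\mathcal A_0)^\vee$ with $\Lie(\mathcal A_0)$, componentwise and with conjugated action. So at the CM point, $\Omega_{\mathcal A}$ is a \emph{tensor product} of two one-dimensional pieces: the $\iota$-piece of $\Lie(A_0)$ and the $\bar\iota$-piece of $\Lie(A_{\BW})$, where $A_{\BW}$ has the nearby CM type $\Phi_\iota$. Consequently $\LL\cdot\mathcal P$ is, up to metric constants, the \emph{sum} of two partial Faltings heights $h(\Phi,\iota)+h(\Phi_\iota,\bar\iota)$ in Colmez's sense. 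It is not a difference, and it is not an expression in full Faltings heights: the other $[F:\QQ]-1$ components of $\Lie$ never enter. The $A_0$-factor is also not removed by the toric normalization, which only divides by $\delta_Z[\wt E:E]$. Your own test case exposes the problem. For $F=\QQ$ one has $\Phi_\iota=\bar\Phi$, so $h_{\Fal}(\Phi_\iota)-h_{\Fal}(\Phi)=0$, and your formula would give a constant with no $L'(0,\eta)/L(0,\eta)$ term. The right-hand side of the theorem, however, is exactly $2h_{\Fal}$ of a CM elliptic curve, by Chowla--Selberg in Colmez's normalization.

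For Step 3, the individual Colmez conjecture is open in general. \cite{YZ1} proves only the averaged formula and, along the way, an identity for the nearby-pair \emph{sum} $h(\Phi_1,\tau_1)+h(\Phi_2,\tau_2)$. By the $F=\QQ$ case, no formula $h(\Phi_\iota)-h(\Phi)=-L_f'(0,\eta)/L_f(0,\eta)+\cdots$ with only discriminant corrections can hold.

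A repaired version of your route (sum of partial heights plus Yuan--Zhang's nearby-pair identity) is conceivable. It would still need an exact comparison of the tautological metric $-q$ with the Faltings metric, and a genuine computation at ramified places to replace the guess in Step 2. Moreover, in \cite{YZ1} that identity is itself obtained from their CM-point height formula on the quaternionic curve.

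The paper instead takes what you list as the \emph{alternative route}, which avoids all of this. Proposition~\ref{restriction of Hodge bundle on curve} gives $\LL\cdot\mathcal P=\LL_1\cdot\mathcal P$ on the RSZ curve $\mathcal C_U$. The PEL chain $\wt{\mathcal C}_U\to\mathcal C'_{U'}\to\mathcal C''_{U''}\leftarrow\mathcal C_{U_B}$ identifies this with the pairing of $\tfrac12\LL_{U_B}$ against the quaternionic CM point; the factor $\tfrac12$ cancels the degree-$2$ passage from $F$ to $E$. Then \cite[Theorem 1.7]{YZ1} gives $-L_f'(0,\eta)/L_f(0,\eta)+\tfrac12\log(d_\BB/d_{E/F})$, and the paper takes $d_\BB=1$ by Remark~\ref{split remark of quaternion}. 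In particular the term $\tfrac12\log(1/d_{E/F})$ is read off from Yuan--Zhang rather than assembled from local lattice indices.
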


In fact, calculating the modular height of special CM cycles on Shimura varieties is constructively significant for many problems, since its always encapsulates rich arithmetic information. One of its most well-known applications is in the proof of the averaged Colmez conjecture. In \cite{YZ1}, Yuan and Zhang convert the Faltings height into the modular height of CM points on the quaternionic Shimura curve. Note that the average Colmez conjecture was proved independently by different method by Andreatta-Goren-Howard-Madapusi-Pera \cite{AGHMP} in 2015, and their approach is to compute the height of a certain CM point on an orthogonal Shimura variety of signature $(n,2)$ over $\QQ$. Furthermore, even for the full Colmez conjecture, it can be transformed into computing the height of a CM point on some Shimura variety, and we refer to \cite{Zh3} for this approach.

For other applications, the modular height of special CM cycles also appears in the final conclusions of \cite{Qiu}, where he proved a modular arithmetic generating series of divisors on unitary Shimura varieties. Using our Theorem \ref{Modular height of CM point} and the computation of the constant $c_3$ from the previous paper \cite[Theorem 3.4]{Guo1} in this series, we have explicitly computed every term \cite[Thm 1.1.1]{Qiu}.

\subsection{Modular heights of unitary Shimura curves}\label{curve introduction}
Keep all the notations as above and assume $n=1$, so that the Shimura variety is a unitary Shimura curve. For this subsection only, we relax item~(3) of Assumption~\ref{Assumption}: at finitely many places inert in $E/F$ we also allow the rank-two Hermitian lattice to be almost self-dual.  All other standing assumptions remain in force.  We denote by $\Sigma_f$ the finite set of these inert almost-self-dual places.

The \textit{modular height} of $X_U$ with respect to $\LL_U$ is defined to be
\begin{equation*}
    h_{\LL_U}(X_U)=\frac{\wh{\deg}(\LL_U)}{\deg(L_U)}.
\end{equation*}

Here $\deg(L_U)$ is the degree of $L_U$ on the generic fibre and the numerator is the
normalized arithmetic self-intersection of the RSZ Hodge line, divided by the toric and
auxiliary reflex-field degrees as in Section~\ref{Integral model}.  Thus
$\wh{\deg}(\LL_U)$ is shorthand for this normalized RSZ arithmetic degree, not for an
intersection on a separately chosen integral model of the non-PEL variety $X_U$.

We stress one normalization that will be relevant in Section~\ref{Unitary Shimura curve}.
Yuan defines the quaternionic modular height by
\[
 h_{\LL_B}^{\mathrm{Yuan}}(C_{U_B})
 =\frac{\widehat{\deg}_F(\LL_B^2)}{2\deg(L_B)}
\]
(\cite[Section~1.1]{Yuan1}), whereas our displayed definition has no additional factor
$1/2$ in the denominator.  Moreover, after passing from $F$ to the CM field $E$, the
quaternionic line corresponds to $2L$ on the common generic fibre.  Consequently
\[
 \widehat{\deg}_E(\LL^2)
 =2\cdot\frac14\,\widehat{\deg}_F(\LL_B^2)
 =\frac12\,\widehat{\deg}_F(\LL_B^2),
 \qquad
 \deg(L)=\frac12\deg(L_B),
\]
after the auxiliary toric and reflex-field factors have been removed.  Hence
\begin{equation}\label{comparison of curve height normalizations}
 h_{\LL}(C_U)=2\,h_{\LL_B}^{\mathrm{Yuan}}(C_{U_B}).
\end{equation}
For the height of a point there is only one power of the Hodge line; the factor $1/2$
from the line and the degree-$2$ arithmetic base change cancel, so the CM-point height
is unchanged.  These two elementary scaling facts account for the normalizations in
Theorems~\ref{Modular height of CM point} and~\ref{Modular height of curve}.

In this paper, we will prove the following formula on modular height of unitary Shimura curves.
\begin{theorem}\label{Modular height of curve}
    Denote by $\zeta_F$ the Dedekind zeta function of $F$. We have
    \begin{equation*}
    h_{\LL}(X)=2\frac{\zeta'_F(2)}{\zeta_F(2)}+\sum_{v\in\Sigma_f}\frac{3N_v-1}{2(N_v-1)}\log N_v-\Big(2\gamma+2\log2\pi-1\Big)[F:\QQ]
    +2\log|d_F|.
    \end{equation*}
    Here $N_v$ is the numerical norm of $v$, $\gamma$ is the Euler constant and $d_F$ is the discriminant of $F/\QQ$.
\end{theorem}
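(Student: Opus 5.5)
The plan is to deduce Theorem~\ref{Modular height of curve} from Yuan's modular height formula for quaternionic Shimura curves \cite{Yuan1}. The bridge is the exceptional correspondence between the unitary Shimura curve $X_U$ and a quaternionic Shimura curve $C_{U_B}$, together with the normalization comparison \eqref{comparison of curve height normalizations}.

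\emph{Step 1: the exceptional isomorphism.} For $n=1$, $V$ is a Hermitian plane over $E$. The accidental isomorphism $\mathrm{SU}(V)\cong B^1$, with $B$ the quaternion algebra over $F$ determined by the Hasse invariants of $V$, identifies the geometric connected components of $X_U$ (after the RSZ base change to $\wt E$) with those of the Shimura curve $C_{U_B}$ attached to $B$. Here $B$ is ramified at all archimedean places except $\iota$, and at the finite places determined by the lattice data. I would check the following. Self-dual inert places give maximal $U_B$ at split places of $B$. Places ramified in $E/F$ with ($\varpi$-modular or almost modular) lattices give maximal orders. The almost self-dual inert places $v\in\Sigma_f$ correspond to places where $U_{B,v}$ is an Iwahori (Eichler of level $v$) subgroup, or where $B$ is ramified, depending on the local Hermitian invariant.

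\emph{Step 2: compare the integral models and metrized Hodge bundles.} On the common generic fibre the quaternionic Hodge bundle $L_B$ pulls back to $2L$. At each finite place, using the relative Rapoport--Zink uniformization for the RSZ model and the Drinfeld/Cerednik or Deligne--Rapoport description on the quaternionic side, I would identify the normalized RSZ model with the base change of Yuan's integral model, and check that the metrized lines match. This identification would be carried out up to vertical divisors supported over $\Sigma_f$ and the places ramified in $E/F$. The main obstacle is here, at the almost self-dual places $v\in\Sigma_f$. There the two models may differ by a blow-up, or the Hodge lines may differ by a multiple of a vertical component of the special fibre. The correction contributes exactly the term $\frac{3N_v-1}{2(N_v-1)}\log N_v$, which should be compared with Yuan's Eichler-level correction. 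To pin this down I would compute $\LL\cdot\LL$ locally in the Rapoport--Zink space (a union of $\BP^1$'s indexed by vertex lattices) and compare it with Yuan's formula. An alternative is to compute the correction via Theorem~\ref{Modular height of CM point}: evaluate the heights of a CM point under both metrizations, so that the discrepancy is read off from the difference.

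\emph{Step 3: assemble.} Insert Yuan's formula
\[
h^{\mathrm{Yuan}}_{\LL_B}=\frac{\zeta_F'(2)}{\zeta_F(2)}+\sum_{v}\tfrac{3N_v-1}{4(N_v-1)}\log N_v-\Big(\gamma+\log 2\pi-\tfrac12\Big)[F:\QQ]+\log|d_F|.
\]
The sum runs over places of maximal-order ramification; any such contributions must be matched and removed against the Step~2 vertical corrections so that only $\Sigma_f$ survives. Then multiply by $2$ using \eqref{comparison of curve height normalizations}. The archimedean constants then become $-(2\gamma+2\log 2\pi-1)[F:\QQ]$, and the discriminant term becomes $2\log|d_F|$, as in the statement. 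The remaining verification is bookkeeping. One must check that the toric factor and the degree $[\wt E:E]$ cancel in the ratio $\wh{\deg}(\LL_U)/\deg(L_U)$. One must also check that the places ramified in $E/F$ give no contribution, which should follow from the Hodge bundles agreeing exactly there.
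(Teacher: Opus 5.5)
Your overall strategy is the paper's: transfer Yuan's quaternionic formula through the exceptional correspondence, then double it via \eqref{comparison of curve height normalizations}. However, your treatment of $\Sigma_f$ has a genuine gap. You leave open whether an almost self-dual inert place gives an Eichler level in a split $B_v$ or a ramified $B_v$. You then attribute the $\Sigma_f$-term to a vertical discrepancy between integral models (Step~2), and in Step~3 you propose to ``match and remove'' Yuan's ramification sum against that discrepancy.

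The missing input is Proposition~\ref{relation of lattice and order nonsplit case}, together with a parity check that settles your dichotomy.
\begin{itemize}
\item At an inert place, an almost self-dual lattice $\Lambda_{1,v}$ has an orthogonal basis with $v(q(e_1))=0$ and $v(q(e_2))=1$. So its determinant has odd valuation and $\epsilon(\BV_{1,v})=-1$.
\item By Proposition~\ref{isomorphism between quaternion and unitary space}, with $(-1,D_v)_v=1$ at an inert place, $\BB_v$ is therefore a division algebra. The lattice is exactly the set of vectors of integral norm, i.e.\ the unique maximal order $\{x: v(\mathrm{Nrd}\,x)\ge 0\}$. The Eichler alternative never occurs.
\item At places ramified in $E/F$, the $\varpi_{E_v}$-modular lattice forces $\BB_v$ to be split, and its stabilizer is that of the maximal order (Proposition~\ref{relation of lattice and order}).
\item At all remaining finite places, $\BB_v$ is split with maximal order.
\end{itemize}
Hence $U_B$ is maximal everywhere, and the finite ramification set of $\BB$ is exactly $\Sigma_f$.

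Consequently Yuan's maximal-level theorem applies verbatim. The $\Sigma_f$-term in the statement is simply Yuan's own ramification term $\sum_{v\in\Sigma_f}\frac{3N_v-1}{4(N_v-1)}\log N_v$, doubled. There is nothing to remove in Step~3, and no extra vertical correction should be computed in Step~2. Adding a correction of the size you predict on top of Yuan's ramification term would count the $\Sigma_f$-contribution twice. Under your Eichler reading, Yuan's theorem, which is stated for maximal $U_B$, would not apply at all.

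The paper also does not compare the two models place by place starting from $\mathrm{SU}(V)\cong B^1$ on geometric components; that identification by itself gives no map of integral models. Instead, the Hodge lines are matched through the chain of PEL curves $\wt{\mathcal C}_U\to\mathcal C'_{U'}\to\mathcal C''_{U''}\leftarrow\mathcal C_{U_B}$ of \cite[Section~5]{YZ1}, with $\frac12\LL_B\leftrightarrow\LL'\leftrightarrow\LL''$, combined with the projection formula for the normalized pairing.
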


When $\Sigma_f=\emptyset$, this formula is exactly the one-dimensional specialization of the main theorem proved in the third paper of this series \cite{Guo2}.  The present curve theorem is slightly more general, because here we also allow almost self-dual lattices at inert places; their contribution is precisely the displayed $\Sigma_f$-term.  The formula is the doubled version of \cite[Theorem~1.1]{Yuan1}, exactly as prescribed by the normalization identity \eqref{comparison of curve height normalizations}; it should not be read as a different formula for Yuan's quaternionic height. For a complete introduction to modular height, we refer to the introduction to the main theorem in \cite{Guo2}. This compatibility is not a coincidence. In fact, the core idea for proving both this theorem and Theorem \ref{Modular height of CM point} is to use morphisms between Shimura curves to transfer the relevant conclusions on quaternionic Shimura curves to unitary Shimura curves.

\subsection{Main theorem of generating series of divisors}\label{generating series introduction}
In our proof of the modular height formula, the generating series formed by the special divisors on the unitary Shimura variety and its arithmetic version are very crucial. It is the core component of the height series on the arithmetic side. In fact, a common idea shared between our proof and the proof of \cite{BH} of arithmetic volume formula is the use of generating series of divisors to do induction on the dimension of Shimura varieties. From a more general perspective, generating series are important tools for studying special cycles on Shimura varieties. Here, we give a brief introduction.

Keep all the notations as above. For any Schwartz function $\Phi\in\mathcal{S}(\BV)$ invariant under $U$, we have a generating series on the unitary Shimura variety $X_U$:
\begin{equation*}
    Z_\Phi(\tau)=[L^\vee]+\sum_{t\in F_+}Z_t q^t.
\end{equation*}
Here $q=(e^{2\pi i \tau_1},\cdots,e^{2\pi i \tau_d})$ with $d=[F:\QQ]$ and $\tau=(\tau_k)^d_{k=1}\in\mathcal{H}^d$, and $Z_t$ is the weighted special divisor
\begin{equation*}
    Z_t=\sum_{x\in U\backslash V_f,\langle x,x\rangle=t}\Phi(x)Z(x)_U
\end{equation*}
with $Z(x)_U$ the Kudla special divisor on $X_U$ associated with $x$. This is a Hilbert modular form of weight $\mathfrak{m}$. Here $\mathfrak{m}=(\mathfrak{m}_v)_{v|\infty}$, where $\mathfrak{m}_v$ is a pair of integers that is defined in \cite[Sec 3.2]{Guo1}. Note that we can also write such generating function in terms of
\begin{equation*}
    Z(g,\Phi)=r(g)\Phi(0)[L^\vee]+\sum_{t\in F^+}\sum_{y\in U\backslash V_f,\langle y,y\rangle=t}r(g)\Phi(y)Z(y)_U
\end{equation*}
with $g\in\UU(\BA_F)$ and $r(g)$ the Weil representation. It should be noted that each special divisor can actually be viewed as a unitary Shimura variety of codimension 1.

One can say that the generating series is a geometric generalization of the theta series (especially for the generating series of divisors), hence it is natural to ask the modularity of such series. The modularity of generating series on Shimura varieties is a relatively well-studied problem, and has a long history. In 1987, Gross, Kohnen and Zagier proved that certain generating series on a modular curve of Heegner points are modular forms of weight $3/2$ with values in the Jacobian. Then in 1990, Kudla and Millson proved a generating series of cohomological cycles on orthogonal Shimura varieties over totally real field is modular using their theory of cohomological theta lifting. A natural question is whether one can further prove that the generating series taking values in the Chow cycle is modular, and this question was studied in \cite{Bor,KRY,YZZ1,Liu1} as well as in other works for different Shimura varieties.

The reason why the modularity of the generating series of Chow cycles is important is that, by taking intersection numbers, one obtains a genuine automorphic form. One very important application of the generating series is the so-called ``geometric Siegel--Weil formula". In a word, these types of formulas relate the intersection numbers of generating series to special values of Eisenstein series. And this is precisely the key idea in the area of special value formulas pioneered by the work \cite{GZ} of Gross and Zagier and Kudla's program.

In our work, we will use a geometric Siegel--Weil formula in this paper, i.e., we have
\begin{equation}
    \deg_L(Z(g,\Phi)):=Z(g,\Phi)\cdot c_1(L)^{n-1},
\end{equation}
where $\cdot$ denotes the intersection. This formula is stated in \ref{Geometric Siegel Weil}, and the proof follows from a very similar result in \cite{Kud1}.

Meanwhile, one crucial problem is to extend the (modular) generating series of Chow cycles over a Shimura variety to a (modular) generating series of arithmetic Chow cycles over a reasonable integral model. In other words, we want to find an arithmetic generating series
\begin{equation}\label{arithmetic generating series introduction}
    \wh{\mathcal{Z}}_\Phi(\tau):=[\LL^\vee]+\sum_{t\in F_+}\wh{\mathcal{Z}}_t q^t
\end{equation}
on the integral model, which is also modular (in the arithmetic Chow group) up to some error terms. This is known as \textit{Kudla's modularity problem}, mentioned in \cite{Kud3}.

For the general case, this problem remains open. Nonetheless, there have been many very valuable results. For the case of arithmetic divisors or for the case of arithmetic 1-cycles, there is a great deal of related work, including Kudla--Rapoport--Yang \cite{KRY} for quaternionic Shimura curves over $\QQ$ in 2006, Bruinier--Burgos Gil--Kuhn \cite{BBK} for Hilbert modular surfaces over $\QQ$ in 2007, Howard--Madapusi Pera \cite{HP} for orthogonal Shimura varieties over $\QQ$ in 2020, and Bruinier--Howard--Kudla--Rapoport--Yang \cite{BHK+} for unitary Shimura varieties over imaginary quadratic fields with self-dual lattice level structures in 2020.

An idea of S. Zhang \cite[Section 3.5]{Zh2}, developed for unitary generating series by Qiu \cite{Qiu}, is to pass from the geometric divisor class $Z_t$ to its $\LL$-admissible arithmetic class.  In the present paper two objects must be distinguished.  The admissible extension is a class, unique only after quotienting by classes pulled back from the arithmetic base; the finite-place representative used in the calculation is the moduli-theoretic Kudla--Rapoport divisor on the RSZ model.  Its intersection with the CM cycle is defined by the Euler--Poincar\'e characteristic of a derived tensor product on the corresponding relative Rapoport--Zink space.  Consequently, vertical and non-Tor-independent contributions are already contained in the local derived intersection, and we never identify the admissible class with a Zariski closure merely from a smoothness assertion.  Section~\ref{Arakelov} makes this convention precise.

Analogous to the geometric generating series, the core significance of the arithmetic generating series is that by taking arithmetic intersection numbers, one obtains a genuine automorphic form. Moreover, these automorphic forms are often closely related to derivatives of Eisenstein series at special points. This line of work is collectively referred to as the ``\textit{arithmetic Siegel–Weil formula}". Now we state the main theorem of this paper, which perfectly illustrates this philosophy.

\begin{theorem}\label{main theorem of arithmetic Siegel-Weil I}
    Suppose $\wh{\mathcal{Z}}(g,\Phi)$ is the normalized RSZ arithmetic class associated with the $\LL$-admissible extension of $Z(g,\Phi)$, and let $\wh{\mathcal{Z}}_*(g,\Phi)$ be its non-constant part.  Let $\mathcal P$ be the normalized RSZ CM cycle, with finite intersections understood in the derived relative sense of \eqref{normalized RSZ intersection}, and let $\Pr' I'(0,g,\Phi)$ be the quasi-holomorphic projection defined in \cite[Theorem 3.3]{Guo1}. Then
    \begin{equation*}
        \Pr' I'(0,g,\Phi)+\wh{\mathcal{Z}}_*(g,\Phi)\cdot \mathcal{P}
    \end{equation*}
    is a sum of finitely many non-degenerate pseudo-theta series, non-singular pseudo-Eisenstein series, and degenerate pseudo-theta series. Moreover, every non-degenerate pseudo-theta series and every non-singular pseudo-Eisenstein series occurring here can be written out explicitly.
\end{theorem}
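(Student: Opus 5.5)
The plan is to follow the Gross–Zagier/Yuan–Zhang–Zhang strategy, as in \cite{YZ1} and \cite{Yuan1}. I would decompose the height series $\wh{\mathcal{Z}}_*(g,\Phi)\cdot\mathcal{P}$ into local contributions and compare each one with the matching local term of $\Pr' I'(0,g,\Phi)$ computed in \cite[Theorem 3.3]{Guo1}. First I write the non-constant part as
\[
\wh{\mathcal Z}_*(g,\Phi)=\sum_{t\in F_+}\sum_{y}r(g)\Phi(y)\,\wh{\mathcal Z}(y)_U .
\]
Its pairing with $\mathcal P$ then splits into three pieces: the archimedean Green function part $\sum_{v\mid\infty}$, the finite part $\sum_{v<\infty}$ given by the derived RSZ intersection \eqref{normalized RSZ intersection}, and the admissibility correction coming from the $\LL$-admissible normalization. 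For the correction I would use the fact that the admissible class differs from the Kudla–Rapoport divisor plus its Kudla Green current by a vertical class plus a multiple of $\LL$. The geometric Siegel–Weil formula \ref{Geometric Siegel Weil} identifies the generic-fibre degree of $Z(g,\Phi)$ with an Eisenstein series. Consequently this correction pairs with $\mathcal P$ to give an Eisenstein series, which I would absorb into the degenerate and non-singular pseudo-Eisenstein terms.

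For the proper intersection part, I would apply the decomposition $\BV=\BW\oplus W^\perp(\BA_E)$ to write each $y=y_1+y_2$. Then I unfold the sum over $G(\QQ)$-orbits at each place $v$ using the relative uniformization. At the archimedean place $\iota$ the CM point is $(e)$, and the Green function $g_y$ evaluated at $(e)$ is an explicit function of $q(y_1)$ and $q(y_2)$. The sum over $y$ becomes a pseudo-theta series in the sense of \cite{Guo1}, with archimedean kernel given by an exponential-integral function. I would match it against the $\iota$-term of $\Pr' I'$, where the holomorphic projection also produces $\mathrm{Ei}$-type integrals. The difference should be a non-degenerate pseudo-theta series whose kernel is explicitly the difference of the two $\mathrm{Ei}$ expressions; degenerate terms arise from $y_1=0$.

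At a finite place $v$ nonsplit in $E$, the $\mathcal P$ fibre is supersingular. I would use the Rapoport–Zink uniformization by the nearby definite space $V^{(v)}$, together with the relative RZ space for $\BW\subset\BV$. The derived intersection of $\mathcal Z(y)$ with the CM cycle then reduces to a local intersection multiplicity $m_v(y)$ on the Lubin–Tate or Drinfeld-type space. I expect Gross–Keating/Gross–Zagier quasi-canonical lifting computations to give $m_v(y)=\frac12(\ord_v(q(y_2))+1)\log N_v$ up to explicit constants, adjusted at ramified and almost self-dual places. Comparing with the local Whittaker derivative $W'_v$ appearing in $\Pr'I'$ from \cite{Guo1}, the main parts should cancel for $\Phi_v$ standard. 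For nonstandard $\Phi_v$ at finitely many places, the residual contributions are non-degenerate pseudo-theta series with explicit local kernels. At split places the intersection is empty in the supersingular locus, and the remaining terms vanish or are degenerate.

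I expect the main obstacle to be the ramified places and the almost $\varpi$-modular lattices. There the RZ space is not the standard unramified one, and the derived intersection may carry vertical contributions. I would first try to reduce, via the exceptional isomorphisms with quaternionic/Lubin–Tate spaces in low rank, to the computations already made in \cite{YZ1} and \cite{Yuan1}. Any leftover discrepancy whose Fourier coefficients are supported on finitely many $t$ I would then treat as a non-singular pseudo-Eisenstein series. Finiteness of the total collection follows because only finitely many places are bad for $(\Phi,U,E)$.
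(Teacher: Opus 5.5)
\textbf{Genuine gap: the archimedean step.} Your global skeleton matches the paper's. It compares place by place with \cite[Theorem 3.3]{Guo1}. Improper terms ($y\in W^\perp$, i.e.\ vanishing $W$-component) and split places give degenerate pseudo-theta series. At non-split places the supersingular uniformization gives multiplicity $(v(q(y_2))+1)$ in the $W$-component $y_2$. The archimedean step, however, fails as you set it up.

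The quasi-holomorphic projection does not produce $\Ei$-type kernels. The archimedean term of $\Pr'I'$ has kernel $k_{v,s}$, and the Gross--Zagier computation gives
$k_{v,s}(y)=\frac{\Gamma(2s+n+1)}{(4\pi)^s\Gamma(s+n+1)}Q_s(1-q(y_2))+O_s(|q(y_2)|^{-s-n-1})$,
where $Q_s$ is the hypergeometric (Legendre-type) kernel \eqref{Q function}. What matches this is not Kudla's $\Ei$ current. It is the Gross--Zagier/Oda--Tsuzuki family $m_{x,s}=Q_s(1+R_x)$, averaged over $\Gamma$ and regularized by $\wt{\lim}_{s\to0}$.

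Consequently the archimedean difference is not a non-degenerate pseudo-theta series with an ``$\Ei$-difference'' kernel. It equals the derivative at $s=0$ of the gamma factor times $\Res_{s=0}\mathcal M^{(v)}_\Phi(s,g,\tau)$. Lemma~\ref{Integral of Green function} together with \eqref{Geometric Siegel Weil} turns that residue into an Eisenstein series. This yields $\bigl(-\sum_{i=1}^n\frac1i+\gamma+\log 4\pi\bigr)E_*(0,g,r(\tau)\Phi)$, as in Proposition~\ref{Comparison of archimedean K and M}.

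\textbf{The admissibility correction also breaks with Kudla's current.} Kudla's current satisfies $dd^c g+\delta_{Z(y)}=\omega$ with $\omega$ the Kudla--Millson form, which is not a multiple of $c_1(\overline L)$. Hence $g^{\mathrm{adm}}_y-g^{\mathrm{Kud}}_y$ is a non-constant smooth function. The arithmetic divisor it defines is neither vertical nor a multiple of $\LL$, and its value at $P$ is not an Eisenstein series; the Kudla-Green generating series is not even holomorphic in $\tau$. To make your route work you would need a separate theorem comparing the holomorphic projection of the Kudla-Green height series with the admissible one.

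The paper avoids this because $\wt{\lim}_{s\to0}g_s$ is already weakly admissible. The correction is therefore a constant, namely the regularized integral $-\deg_{\overline L}(\Gamma_x\backslash D_x)/(2n)$. This gives the term $\frac{[F:\QQ]}{n}E_*(0,g,\Phi)$ of Proposition~\ref{extra term}.

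\textbf{Ramified and almost $\varpi$-modular places.} You cannot relabel a leftover with finitely supported Fourier coefficients as a non-singular pseudo-Eisenstein series. The theorem asks for the non-degenerate terms explicitly. The paper obtains them by computing the multiplicity on the relative Rapoport--Zink space via \cite{LMZ}, which gives $2k_{\Phi_v}(1,y)-m_{\Phi_v}(y)\log N_v=c_{\Phi_v}(1,y)-\log|d_v|\,\Phi_v(y)$ (Propositions~\ref{Ramified case} and~\ref{Superspecial case}). This residual term also survives at inert places where $F_v/\QQ_p$ is ramified, so ``cancellation for standard $\Phi_v$'' is not the right criterion.
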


We refer to \cite[Section 2.3]{Guo1} for the complete definition of the pseudo-theta series and pseudo-Eisenstein series, and Section \ref{Proof of the main theorem} for the explicit non-degenerate terms. The degenerate pseudo-theta terms are retained until the complete automorphic combination is formed.  They are not declared zero term by term; rather, the key lemma of \cite[Lemma~2.4]{Guo1} shows that they do not enter the non-degenerate theta/Eisenstein completion or the constant-term identity used in the modular-height comparison.

This result can be viewed as a higher-dimensional generalization of \cite[Thm 5.1,Thm 7.8]{YZZ2}, which is known as the ``kernel identity". In fact, according to our discussion in the first paper of this series \cite[Section 3.2]{Guo1}, the holomorphic projection of the derivative of mixed theta-Eisenstein series $I(s,g,\Phi)$ can be decomposed into two parts using the quasi-holomorphic projection, i.e., $\Pr' I'(0,g,\Phi)$ and $\Pr' \mathcal{J}'(0,g,\Phi)$. This theorem shows that one of these parts is in perfect agreement with the arithmetic intersection number of the arithmetic generating series of divisors with a CM cycle. Moreover, we will see in the third paper \cite{Guo2} of this series that the other part is also in perfect agreement with another arithmetic object, i.e., the arithmetic degree of the arithmetic generating series of divisors. Just as we collectively refer to the holomorphic projection on the analytic side as the derivative series, we will collectively refer to the arithmetic objects given by the arithmetic generating series on these two parts as the \textit{height series}.

The arithmetic Siegel–Weil formula has become a highly active topic in the study of special value formulas in recent years, and it has deep applications. For instance, C. Li and W. Zhang \cite[Theorem 1.3.1]{LZ} prove an identity between the arithmetic degree of Kudla--Rapoport cycles of full rank and the derivative of nonsingular Fourier coefficients of the incoherent Eisenstein series, and Ryan Chen \cite{Chen} (and its corresponding series of articles) extends the identity to corank 1.

From a more general perspective, the properties of derivatives of L-functions can, in turn, lead to deductions about certain properties of the Chow groups of Shimura varieties. For instance, in \cite{LL1,LL2}, C. Li and Y. Liu prove that when the derivative of a specific L-function is nonzero at a special value, certain Chow cycle of the unitary Shimura variety is non-trivial. This type of work ultimately provides strong evidence for the Birch and Swinnerton--Dyer conjecture, and even the more general Beilinson--Bloch conjecture.

\subsection{Arrangement of this paper}
The arrangement of this paper is as follows. In $\S$~\ref{Shimura}, we distinguish the unitary, unitary-similitude, and RSZ Shimura data, record their correct reflex fields, and introduce the RSZ integral models, relative local descriptions, normalized CM cycles, and Hodge lines used in the arithmetic theory. In $\S$~\ref{Unitary Shimura curve}, we focus on the one-dimensional case and prove Theorems~\ref{Modular height of CM point} and~\ref{Modular height of curve} by comparison over a common reflex field.

In $\S$~\ref{Height series}, we define the admissible arithmetic class and its preferred RSZ moduli representative, and then define the height series using normalized pairings. In $\S$~\ref{Explicit local intersection}, all finite contributions are computed on relative Rapoport--Zink spaces, using Faltings duality and derived intersections; combining them with \cite{Guo1} proves Theorem~\ref{main theorem of arithmetic Siegel-Weil I}.

\subsection*{Acknowledgement}
The author is deeply grateful for the valuable assistance and meticulous guidance provided by professor Xinyi Yuan. Indeed, it is thanks to his previous work with Shou-Wu Zhang and Wei Zhang that the author has had the privilege to build upon it and make further contributions. He would like to thank his friend Weixiao Lu for much helpful advice. He thanks Ryan Chen, Yu Luo, Yinchong Song, Liang Xiao and Roy Zhao for helpful communication. He also thanks Yifeng Liu and Wei Zhang for some useful suggestions. Finally, the author is grateful to the anonymous referee for so many valuable comments or suggestions to revise this paper.

\section{Unitary Shimura varieties and integral models}\label{Shimura}
In this section we distinguish the non-PEL unitary Shimura variety from the PEL unitary-similitude and RSZ Shimura varieties.  We record their reflex fields separately and use the RSZ model, rather than a purported canonical smooth model of the non-PEL variety, for all integral and moduli-theoretic constructions.  The signature considered here is the strict fake Drinfeld type of \cite{RSZ2}.

In the following discussion, recall that we always fix a totally real field $F$ of degree $d=[F:\QQ]$ with a fixed infinite place $\iota$. Let $E/F$ be a fixed CM extension, i.e., a totally imaginary quadratic extension of $F$. For convenience, we also fix a CM type $\Phi$ of $E$, i.e., $\Phi$ consists of $d$ distinct archimedean places of $E$ such that $\Phi\cap\overline{\Phi}=\emptyset$. By abuse of notation, we also say that $\iota\in\Phi$.

Let $\BV$ be a totally positive definite incoherent Hermitian space over $\BA_E$ of dimension $n+1$ where $n>0$, and $V$ be the nearby coherent Hermitian space with respect to $\iota$. We refer to \cite[Section 2.1]{Guo1} for these notations.  We should remind the reader that our $n$ actually corresponds to $n-1$ in \cite{BH}, \cite{RSZ2} and some other literature.

\subsection{Unitary Shimura varieties}
In this subsection, we provide the specific definitions of these various unitary Shimura varieties, and explain their interconnections.

\subsubsection*{Unitary Shimura variety in our setting}
We first give a precise definition of the unitary Shimura variety we consider. Throughout this paper, the word \textit{unitary Shimura variety} always means this type of Shimura variety.

Let $G=\Res_{F/\QQ}\U(V)$ which is a reductive group over $\QQ$. The Hermitian symmetric domain $D$ is defined as follows:
\begin{equation*}
    D=\{z\in\BP(V_{\iota,\CC})\big|q(z)<0\}.
\end{equation*}
It is connected and carries a $\U(V_\CC)$-invariant complex structure.

Let $U$ be an open compact subgroup of $G(\wh{\QQ})$. Then we can define a Shimura variety $X_U$ with $\CC$-points
\begin{equation*}
    X_U(\CC)=G(\QQ)\backslash D\times G(\wh{\QQ})/U.
\end{equation*}
Such Shimura variety is smooth over the reflex field $E$ of dimension $n$. Alternatively, the conjugacy class in the Shimura datum for $G$ is the conjugacy class of the homomorphism $h_G=(h_{G,\varphi})_{\varphi\in\Phi}$, where
\begin{equation*}
    h_{G,\varphi}: z\mapsto \mathrm{diag}(1,\cdots,1),\quad (\varphi\ne\iota)
\end{equation*}
and
\begin{equation*}
    h_{G,\iota}: z\mapsto \mathrm{diag}(1,\cdots,1,\overline{z}/z)
\end{equation*}
under the inclusion
\begin{equation*}
    G(\RR)\subset\GL_{E\otimes\RR}(V\otimes\RR)\stackrel{\sim}{\longrightarrow}\prod_{\varphi\in\Phi}\GL_\CC(V_\varphi).
\end{equation*}
Thus, we can also define a Shimura variety $S(G,\{h_G\})_U=X_U$ in this way.

Moreover, there is a natural tautological line bundle $L_U$ over $X_U$, which is defined as follows. Under the above complex uniformization, let $L_D$ be the tautological bundle on $D$. This line bundle carries a natural Hermitian metric $h_{L_D}$ such that
\begin{equation}\label{tautological bundle}
    h_{L_D}(s_z)=-\langle s_z,s_z\rangle,
\end{equation}
for $s_z\in L_{D,z}\cong \CC z$, $z\in D$, which is equivariant under the action of $\U(V_\CC)$. Let $\LU$ be the descent of $L_D\times 1_{G(\wh{\QQ})/U}$, this is an ample line bundle on $\XU$ with $\QQ$-coefficients. This is the line bundle used in the main theorem. We should remind the reader that the metric we define here is different with the one defined in \cite[(5.2.6)]{BH}, i.e., there the metric has the normalization factor $\frac{1}{4\pi e^\gamma}$, which will also result in differences in the final formula.

Note that the canonical bundle on $D$ is naturally isomorphic to $L_D^{\otimes (n+1)}$. We will see that in our case, the condition (3) of admissible extension, which will be introduced in Section \ref{Arakelov}, is equivalent to $\int_{X_{\sigma'}(\CC)}g_\mathcal{D}\omega_X=0$. Here $\omega_X$ is the descent of $\omega$.
See \cite[Page 13]{GS2} for example.

An important remark is that, this type of Shimura variety is not of PEL type, i.e., it is not related to a moduli problem of abelian varieties. However, this Shimura variety is of abelian type.

\subsubsection*{Shimura variety of unitary similitudes}

Let $\GU(V)$ be the unitary similitude group over $F$, so that for every $F$-algebra $R$,
\begin{equation*}
 \GU(V)(R)=\left\{g\in\GL_{E\otimes_F R}(V\otimes_F R)\ \middle|\
 \langle gx,gy\rangle=c(g)\langle x,y\rangle\text{ for some }c(g)\in R^\times\right\}.
\end{equation*}
The group occurring in the PEL Shimura datum is the following $\QQ$-group, rather than the full Weil restriction:
\begin{equation}\label{definition GQ}
 G^\QQ:=\BG_m\times_{\Res_{F/\QQ}\BG_m}\Res_{F/\QQ}\GU(V).
\end{equation}
Here $\BG_m\to\Res_{F/\QQ}\BG_m$ is the diagonal map and the other map is induced by $c$. Equivalently,
\begin{equation*}
 G^\QQ(\QQ)=\{g\in\GU(V)(F)\mid c(g)\in\QQ^\times\}.
\end{equation*}

The signatures of $V$ determine a generalized CM type $r$ of rank $n+1$ on $E$.  With respect to the fixed CM type $\Phi$ and the distinguished embedding $\iota\in\Phi$, it is the strict fake Drinfeld type
\begin{equation*}
 r_\iota=n,\qquad r_\varphi=n+1\quad(\varphi\in\Phi\setminus\{\iota\}),
 \qquad r_{\bar\varphi}=n+1-r_\varphi.
\end{equation*}
The Shimura homomorphism has components
\begin{equation*}
 h_{G^\QQ,\varphi}(z)=\operatorname{diag}
 \big(z\,1_{r_\varphi},\bar z\,1_{n+1-r_\varphi}\big),
 \qquad \varphi\in\Phi.
\end{equation*}
Its reflex field is the reflex field $E_r$ of the generalized CM type $r$; in general $E_r$ is not the CM field $E$.

Let $K\subset G^\QQ(\Af)$ be compact open.  The Kottwitz moduli problem is a category over $(\mathrm{LNSch})_{/E_r}$.  It parametrizes quadruples $(A,\psi,\lambda,\bar\eta)$ where $A$ is an abelian scheme of relative dimension $(n+1)[F:\QQ]$, $\psi:E\to\End^0(A)$ is an action, $\lambda$ is a quasi-polarization, and $\bar\eta$ is a $K$-orbit of $\BA_{E,f}$-linear symplectic similitudes.  We impose
\begin{equation}\label{Rosati condition}
 \operatorname{Ros}_\lambda(\psi(a))=\psi(\bar a),\qquad a\in E,
\end{equation}
and the Kottwitz determinant condition
\begin{equation*}
 \operatorname{char}\big(\psi(a)\mid\Lie A;T\big)
 =\prod_{\varphi\in\Hom_\QQ(E,\overline\QQ)}(T-\varphi(a))^{r_\varphi}.
\end{equation*}
Morphisms are $E$-linear quasi-isogenies preserving the level structure and carrying the polarization to a locally constant $\QQ^\times$-multiple.

\begin{theorem}\label{Similitudes theorem}
The above moduli problem is represented by a Deligne--Mumford stack $M_K$ over $\Spec E_r$.  If $n$ is odd, its complex fiber is identified with
\begin{equation*}
 S(G^\QQ,\{h_{G^\QQ}\})_K.
\end{equation*}
If $n$ is even, it is a finite disjoint union of copies of this Shimura variety, indexed by
\begin{equation*}
 \ker^1(\QQ,G^\QQ)=\ker\!\left(H^1(\QQ,G^\QQ)\longrightarrow H^1(\BA,G^\QQ)\right).
\end{equation*}
The tower $(M_K)_K$ is the canonical model; see \cite[Theorem 2.2]{RSZ2}.
\end{theorem}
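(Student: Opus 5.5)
This theorem is the standard Kottwitz representability and complex-uniformization result, specialized to our datum. I would prove it by reducing to that general theorem, citing \cite[Theorem 2.2]{RSZ2} and Kottwitz's original argument. The plan has four steps: check that $(G^\QQ,\{h_{G^\QQ}\})$ is a PEL datum of unitary type, prove representability, compare with the Shimura variety over $\CC$, and identify the field of definition of the tower.

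\emph{Step 1: the PEL datum.} Regard $V$ as a $\QQ$-vector space with the alternating form $\langle x,y\rangle_\QQ=\tr_{E/\QQ}(\delta\langle x,y\rangle)$, where $\delta\in E^\times$ is a fixed element with $\bar\delta=-\delta$. Then $G^\QQ$ is exactly the group of $E$-linear symplectic similitudes of $(V,\langle\,,\rangle_\QQ)$ whose multiplier lies in $\BG_m$. This is why the fibre product \eqref{definition GQ}, and not the full $\Res_{F/\QQ}\GU(V)$, is the correct group. I would then check two things directly from the signatures $(n,1)$ at $\iota$ and $(n+1,0)$ elsewhere, using the displayed $h_{G^\QQ,\varphi}$. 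First, $h_{G^\QQ}$ defines a polarized Hodge structure of type $\{(-1,0),(0,-1)\}$. Second, the weight-$(-1,0)$ piece has $E$-character $r$.

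\emph{Step 2: representability.} The reflex field is the field of definition of the isomorphism class of the $E\otimes\CC$-module $\Lie A$, which is $E_r$. This is what makes the Kottwitz determinant condition definable over $E_r$. For neat $K$, the functor of quadruples $(A,\psi,\lambda,\bar\eta)$ up to isogeny is equivalent to the prime-to-nothing moduli functor of Kottwitz (\S5). It is representable by a quasi-projective scheme, obtained as a closed subscheme of a Siegel moduli space with level structure, cut out by the endomorphism and determinant conditions. For general $K$ I would pass to a neat normal subgroup and take the quotient stack, which gives a Deligne--Mumford stack. The Rosati condition \eqref{Rosati condition} is what makes $\lambda$ $E$-compatible.

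\emph{Step 3: complex points and the Hasse principle.} Over $\CC$, a point gives the rational Betti homology $H_1(A,\QQ)$, an $E$-hermitian space of dimension $n+1$. It has the same local invariants as $V$ at all places: at finite places this is forced by $\bar\eta$, and at infinite places by the signature condition. The standard uniformization then identifies $M_K(\CC)$ with
\[
\coprod_{V'}G'^\QQ(\QQ)\backslash D\times G^\QQ(\Af)/K,
\]
where $V'$ runs over hermitian spaces locally isomorphic to $V$ everywhere. I expect this step to be the main obstacle. The set of such $V'$ is in bijection with $\ker^1(\QQ,G^\QQ)$. Because of the multiplier condition $c\in\QQ^\times$, each $V'$ is a similitude twist of $V$, which is why every component is a copy of $S(G^\QQ,\{h_{G^\QQ}\})_K$.

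The size of this set is then a parity question. Hermitian spaces over $E$ are classified by their local invariants, via Landherr, together with the discriminant. The only global obstruction is whether the discriminant can be adjusted by a rational similitude factor. For $n+1$ even, that is $n$ odd, scaling by $c\in\QQ^\times$ changes the discriminant by $c^{n+1}$, a norm. I would show via Kottwitz's computation that $\ker^1$ is trivial in this case. For $n$ even, $\ker^1$ may be nontrivial, and its elements index the copies. Here I would follow \cite[\S8]{Kottwitz} and the explicit discussion in \cite{RSZ2} rather than redo the Galois cohomology.

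\emph{Step 4: canonical model.} By Deligne's characterization, it suffices to check that the action of $\mathrm{Gal}(\overline\QQ/E_r)$ on the special (CM) points of the tower agrees with Shimura--Taniyama reciprocity. For the moduli interpretation this is the main theorem of complex multiplication, exactly as in Kottwitz's argument.
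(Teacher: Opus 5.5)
Your proposal is correct and follows essentially the same route as the paper, which gives no argument of its own but simply invokes \cite[Theorem 2.2]{RSZ2}, i.e.\ the Kottwitz representability, complex uniformization and canonical-model argument that you sketch. One correction to Step 3: the copies are indexed by hermitian spaces locally \emph{similar} to $V$ (with multipliers in $\QQ_p^\times$), modulo global $\QQ^\times$-similitude, since spaces that are locally isometric to $V$ are already isometric to it by Landherr; moreover, for even $n$ the nontrivial classes are twists $\delta V$ with $\delta\in F^\times\setminus\QQ^\times\Nm(E^\times)$, not rational twists, and such twists leave $G^\QQ$ and $h_{G^\QQ}$ unchanged, which is why each component is a copy of $S(G^\QQ,\{h_{G^\QQ}\})_K$.
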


\subsubsection*{The RSZ Shimura varieties}

Let $V_0$ be a one-dimensional totally positive definite $E/F$-Hermitian space. Set
\begin{equation*}
 Z^\QQ=\{z\in\Res_{E/\QQ}\BG_m\mid\Nm_{E/F}(z)\in\BG_m\}.
\end{equation*}
The toric Shimura datum $(Z^\QQ,\{h_{Z^\QQ}\})$ has reflex field $E_\Phi$, the reflex field of the CM type $\Phi$. Define
\begin{equation*}
 \wt G:=Z^\QQ\times_{\BG_m}G^\QQ,
\end{equation*}
where the maps to $\BG_m$ are $\Nm_{E/F}$ and the similitude character.  The reflex field of the RSZ datum is
\begin{equation}\label{RSZ reflex field}
 \wt E=E_\Phi E_r=E_\Phi E_{r^\natural}.
\end{equation}
Here $E_{r^\natural}$ is the reflex field of the unitary datum $(G,\{h_G\})$.  In the strict fake Drinfeld case, the distinguished embedding $\iota$ identifies $E_{r^\natural}$ with the CM field $E$. Thus $X_U$ is defined over $E$, whereas the similitude and RSZ varieties are generally defined over the larger fields $E_r$ and $\wt E$.

\begin{proposition}\label{relations among unitary RSZ Shimura varieties}
Keep all the notations in this section. Then the natural projection and the product decomposition above give the following relations between the three Shimura data.
The projection $\wt G\to G^\QQ$ induces, after base change, a morphism
\begin{equation}\label{similitude and RSZ}
 S(\wt G,\{h_{\wt G}\})_{K_{\wt G}}
 \longrightarrow S(G^\QQ,\{h_{G^\QQ}\})_{K_{G^\QQ}}\otimes_{E_r}\wt E.
\end{equation}
The central embedding $Z^\QQ\hookrightarrow G^\QQ$ gives an isomorphism
\begin{equation*}
 \wt G\stackrel\sim\longrightarrow Z^\QQ\times G,
 \qquad (z,g)\longmapsto(z,z^{-1}g),
\end{equation*}
and hence, for product level $K_{\wt G}=K_{Z^\QQ}\times U$, an isomorphism of canonical models over $\wt E$,
\begin{equation}\label{unitary and RSZ}
 S(\wt G,\{h_{\wt G}\})_{K_{\wt G}}
 \cong
 \big(S(Z^\QQ,\{h_{Z^\QQ}\})_{K_{Z^\QQ}}\otimes_{E_\Phi}\wt E\big)
 \times_{\wt E}\big(X_U\otimes_E\wt E\big).
\end{equation}

Let $\wt D=D(V_0)\times D(V)\cong D$.  The complex uniformization is
\begin{equation}\label{RSZ Shimura variety complex}
 S(\wt G,\{h_{\wt G}\})_{K_{\wt G}}(\CC)
 =\wt G(\QQ)\backslash\wt D\times\wt G(\Af)/K_{\wt G},
\end{equation}
and projection to the second factor gives
\begin{equation}\label{unitary and RSZ complex}
 \wt G(\QQ)\backslash\wt D\times\wt G(\Af)/K_{\wt G}
 \longrightarrow G(\QQ)\backslash D\times G(\Af)/U.
\end{equation}
\end{proposition}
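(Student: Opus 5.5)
The plan is to reduce everything to standard functoriality of canonical models under morphisms of Shimura data, after checking the relevant maps of groups and Hodge cocharacters. For \eqref{similitude and RSZ}, I would first observe that the projection $\wt G=Z^\QQ\times_{\BG_m}G^\QQ\to G^\QQ$ is a homomorphism of $\QQ$-groups carrying $h_{\wt G}=(h_{Z^\QQ},h_{G^\QQ})$ to $h_{G^\QQ}$. Hence it is a morphism of Shimura data. Choosing $K_{\wt G}$ with image contained in $K_{G^\QQ}$, Deligne's functoriality of canonical models gives a morphism over the compositum of reflex fields. Since $E_r\subset\wt E$ by \eqref{RSZ reflex field}, this is exactly the displayed map after base change from $E_r$ to $\wt E$.

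For the product decomposition, I would first check the group isomorphism on $R$-points. Given $(z,g)$ with $\Nm_{E/F}(z)=c(g)$, the element $z^{-1}g$, with $z$ acting by central scalars, satisfies $c(z^{-1}g)=\Nm(z)^{-1}c(g)=1$, so $z^{-1}g\in G(R)$. The inverse is $(z,g_1)\mapsto(z,zg_1)$. Both maps are algebraic and are homomorphisms because $z$ is central. Next I would check that the Hodge cocharacter transforms correctly. At each $\varphi\in\Phi$, the component $z^{-1}h_{G^\QQ,\varphi}(z)$, where $h_{Z^\QQ,\varphi}$ is the scalar $z$ (or $\bar z$, depending on the normalization of $h_{Z^\QQ}$ fixed by $V_0$), equals $\mathrm{diag}(1_{r_\varphi},(\bar z/z)1_{n+1-r_\varphi})$. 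This is $\mathrm{diag}(1,\dots,1,\bar z/z)$ at $\iota$ and trivial elsewhere, i.e.\ $h_G$ up to the sign convention. Thus $(\wt G,h_{\wt G})\cong(Z^\QQ,h_{Z^\QQ})\times(G,h_G)$ as Shimura data. Shimura varieties of a product datum are products of the Shimura varieties, compatibly with canonical models over the compositum of reflex fields, which here is $E_\Phi\cdot E=\wt E$. For the product level $K_{Z^\QQ}\times U$, transported through the isomorphism, this gives \eqref{unitary and RSZ}.

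For the complex statements I would write the double coset for the product datum and note that $\wt D=D(V_0)\times D(V)$, where $D(V_0)$ is a point, so $\wt D\cong D$. This gives \eqref{RSZ Shimura variety complex}. The map \eqref{unitary and RSZ complex} is then the second projection $(z,g)\mapsto z^{-1}g$, which is well defined on double cosets because it is a group homomorphism sending $\wt G(\QQ)$ to $G(\QQ)$ and $K_{\wt G}$ to $U$.

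The main obstacle I expect is the bookkeeping of conventions. First, the central $Z^\QQ$-factor must be normalized so that $z^{-1}h_{G^\QQ}$ is exactly $h_G$ and not its conjugate. Second, the product level must be interpreted through the twisted isomorphism rather than literally inside $\wt G(\Af)$. I would resolve both by following \cite[Section~3]{RSZ2}, where this decomposition is stated, and only verify compatibility with the normalization of $h_G$ fixed above.
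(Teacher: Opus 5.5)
Your proposal is correct and follows essentially the route the paper relies on: the paper gives no separate argument and defers to the morphism of Shimura data, the central-twist product decomposition, and functoriality of canonical models in Section~3 of Rapoport--Smithling--Zhang. The normalization you flagged, $h_{Z^\QQ,\varphi}(z)=z$, is the one forced here by $V_0$ being totally positive definite and $A_0$ having CM type $\Phi$, and it is exactly what makes $z^{-1}h_{G^\QQ}$ equal to the paper's $h_G$; the other choice, $\bar z$, would introduce a nontrivial central factor $z/\bar z$ at every $\varphi\neq\iota$.
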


Write $\wt V=\Hom_E(V_0,V)$.  The RSZ moduli problem over $(\mathrm{LNSch})_{/\wt E}$ parametrizes tuples
\begin{equation*}
 (A_0,\psi_0,\lambda_0,\bar\eta_0;A,\psi,\lambda,\bar\eta),
\end{equation*}
where the first quadruple lies in a fixed component of the toric moduli stack (base changed from $E_\Phi$ to $\wt E$), the second abelian scheme has signature type $r$, and the level structure identifies
\begin{equation*}
 \Hom_{\BA_{E,f}}(\widehat V(A_0),\widehat V(A))
 \stackrel\sim\longrightarrow\wt V\otimes_E\BA_{E,f}
\end{equation*}
as Hermitian modules.  The Hermitian form on the left is
\begin{equation*}
 h(x,y)=\lambda_0^{-1}\circ y^\vee\circ\lambda\circ x.
\end{equation*}

\begin{theorem}\label{first moduli RSZ}
The RSZ moduli problem is represented by a Deligne--Mumford stack $M_{K_{\wt G}}(\wt G)$ over $\Spec\wt E$, whose complex fiber is the canonical model $S(\wt G,\{h_{\wt G}\})_{K_{\wt G}}$; see \cite[Theorem 3.5]{RSZ2}.
\end{theorem}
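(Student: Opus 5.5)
The plan is to reduce Theorem~\ref{first moduli RSZ} to the two representability statements already available: Theorem~\ref{Similitudes theorem} for the similitude factor, and the analogous (classical) statement for the toric moduli problem of triples $(A_0,\psi_0,\lambda_0,\bar\eta_0)$ of signature $\Phi$. We would proceed in three steps: represent the product problem, cut out the Hermitian level condition, and identify the complex fiber.

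\textbf{Step 1: the product moduli problem.} The toric problem of CM abelian schemes of type $\Phi$ with $\BA_{E,f}$-level structure is represented by a finite étale Deligne--Mumford stack over $E_\Phi$. After base change to $\wt E$, we fix the chosen component $M_0$. The Kottwitz problem is represented by $M_{K}$ over $E_r$ (Theorem~\ref{Similitudes theorem}), again base changed to $\wt E$ using \eqref{RSZ reflex field}. Their fibre product $M_0\times_{\wt E}M_K$ is then a Deligne--Mumford stack.

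\textbf{Step 2: the RSZ level condition.} The RSZ problem should be a relatively representable substack of $M_0\times_{\wt E}M_K$, in fact open and closed. Given a point of the product, we form the local system $\Hom_{\BA_{E,f}}(\widehat V(A_0),\widehat V(A))$ with the Hermitian form $h$. Conditions \eqref{Rosati condition} for both factors make $h$ Hermitian. The requirement that there exist, étale locally, a $K_{\wt G}$-orbit of isometries to $\wt V\otimes_E\BA_{E,f}$ is a locally constant condition on the base. Moreover, the set of such orbits is a finite étale torsor over the locus where it is nonempty. This is the standard argument that level structures defined by locally constant étale data are finite étale.

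The point needing care is that the similitude factors of $\lambda_0$ and $\lambda$ must be matched so that $h$ is genuinely Hermitian and not merely similitude. This is exactly the fibre product over $\BG_m$ in the definition of $\wt G$, and it is automatic because $\lambda_0^{-1}$ appears in $h$. In addition, morphisms are quasi-isogenies rescaling the two polarizations by the same $\QQ^\times$-multiple, so the RSZ stack is a quotient of the rigidified product by a locally constant group action. That quotient remains Deligne--Mumford because the automorphism groups are finite: they are finite because a polarization is fixed up to a common scalar.

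\textbf{Step 3: the complex fiber.} We would follow Kottwitz's method. For each $\CC$-point, the Hermitian space $\Hom_E(H_1(A_0,\QQ),H_1(A,\QQ))$ has the signature of $V$. It is isomorphic to $\wt V$ everywhere locally, so it is globally isomorphic by the Hasse principle for Hermitian spaces. This is precisely where the RSZ trick removes the $\ker^1$ ambiguity present in Theorem~\ref{Similitudes theorem} when $n$ is even. One then obtains the uniformization \eqref{RSZ Shimura variety complex}. Canonicity of the model over $\wt E$ follows by comparing at CM points via Shimura--Taniyama reciprocity, in the same way as for $M_K$.

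\textbf{Main obstacle.} We expect Step 3 to be the hardest part, specifically the vanishing of the $\ker^1$ obstruction and the exact matching of the reflex field. Rather than redo this, we would cite \cite[Theorem 3.5]{RSZ2}, since our datum is the strict fake Drinfeld case treated there, with the index shift $n\mapsto n+1$.
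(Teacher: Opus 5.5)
Your proposal rests on the same thing the paper does. The paper gives no argument beyond citing \cite[Theorem 3.5]{RSZ2}, and you also ultimately invoke it. Your Step 3 correctly isolates the mechanism behind that theorem: the RSZ level structure is a Hermitian \emph{isometry} of $\Hom_{\BA_{E,f}}(\wh V(A_0),\wh V(A))$, so the Hasse principle for Hermitian spaces removes the $\ker^1$ ambiguity of Theorem~\ref{Similitudes theorem}. The statement is therefore established exactly as in the paper.

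Your own Step 2, however, does not work as written. It claims both that the RSZ problem is an open and closed substack of $M_0\times_{\wt E}M_K$ and that the choices of $\bar\eta$ form a finite \'etale torsor over the locus where they exist. These two claims are incompatible.

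The source of the confusion is the level data. A point of $M_K$ already carries a Kottwitz similitude level structure $\bar\eta'$ on $\wh V(A)$. In the RSZ problem, $\bar\eta$ \emph{replaces} $\bar\eta'$ rather than being added to it. Adjoining $\bar\eta$ on top of $\bar\eta'$ therefore represents a different functor, with a superfluous level datum; it is a finite \'etale cover of the RSZ stack, not the RSZ stack itself.

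The correct comparison runs in the other direction, via
\[
 \wh V(A)\cong\wh V(A_0)\otimes_{\BA_{E,f}}\Hom_{\BA_{E,f}}(\wh V(A_0),\wh V(A)),
 \qquad V\cong V_0\otimes_E\wt V .
\]
This gives the map $(\bar\eta_0,\bar\eta)\mapsto(\bar\eta_0,\bar\eta_0\otimes\bar\eta)$, induced by $\wt G\hookrightarrow Z^\QQ\times G^\QQ$. One checks that for the product level its image is cut out by the open and closed condition that the similitude factors of $\bar\eta_0$ and $\bar\eta'$ agree. On that locus $\bar\eta=\bar\eta_0^{-1}\otimes\bar\eta'$ is determined, so no torsor appears.

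This slip does not affect your conclusion, because \cite[Theorem 3.5]{RSZ2} covers representability as well as the identification with the canonical model. It matters only if you want Steps 1--2 to stand on their own.
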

When $V_0=E$ with its norm form, $\wt V\cong V$.

\subsubsection*{Variant moduli problems of the RSZ Shimura variety}

Assume that
\begin{equation}\label{maximal KZ group}
 K_{Z^\QQ}=\{z\in(\mathcal O_E\otimes\widehat\ZZ)^\times\mid
                 \Nm_{E/F}(z)\in\widehat\ZZ^\times\}.
\end{equation}
For a nonzero ideal $\mathfrak a\subset\mathcal O_F$, let $M_0^{\mathfrak a}$ be the toric moduli stack over $E_\Phi$ parametrizing triples $(A_0,\psi_0,\lambda_0)$, where $A_0$ has relative dimension $[F:\QQ]$, the $\mathcal O_E$-action has CM type $\Phi$, and $\ker\lambda_0=A_0[\mathfrak a]$. It is finite and etale over $E_\Phi$ and decomposes as
\begin{equation}\label{variant of M_0}
 M_0^{\mathfrak a}=\coprod_\xi M_0^{\mathfrak a,\xi}.
\end{equation}
After base change to $\wt E$, a fixed component may be used in the RSZ moduli problem. Thus the variant functor over $(\mathrm{LNSch})_{/\wt E}$ parametrizes
\begin{equation}\label{moduli interpretation up to isomorphism}
 (A_0,\psi_0,\lambda_0,A,\psi,\lambda,\bar\eta),
\end{equation}
with $(A_0,\psi_0,\lambda_0)\in M_0^{\mathfrak a,\xi}$ and $(A,\psi,\lambda,\bar\eta)$ as above.  Reinstating the toric level structure gives a natural equivalence
\begin{equation}\label{isomorphism of moduli interpretation}
 \mathcal F'_{K_{\wt G}}(\wt G)\stackrel\sim\longrightarrow
 \mathcal F_{K_{\wt G}}(\wt G).
\end{equation}
All occurrences of these stacks below are understood over their correct reflex fields and then base changed to the common field $\wt E$.

\subsection{RSZ integral models and the normalization convention}\label{Integral model}
The group $G=\Res_{F/\QQ}\U(V)$ is of abelian type but not of PEL type. Thus, we use the PEL-type RSZ model and its relative local models for the arithmetic constructions in this paper.

\begin{assumption}\label{Assumption}
\begin{enumerate}
\item The extension $E/\QQ$ is Galois, or $E$ is the compositum of $F$ with an imaginary quadratic field.
\item Every finite place of $F$ ramified over $\QQ$, or of residue characteristic $2$, is unramified in $E/F$.
\item If $v$ is inert in $E/F$, then $\BV_v$ contains a self-dual lattice $\Lambda_v$.
\item If $v$ is ramified in $E/F$, then $\Lambda_v$ is $\varpi_{E_v}$-modular for odd $n$ and almost $\varpi_{E_v}$-modular for even $n$.
\end{enumerate}
\end{assumption}

These assumptions are imposed only for the explicit local calculations below.

Let $K_{Z^\QQ}$ stabilize a rank-one Hermitian lattice $\Lambda_0\subset V_0$, let $U$ stabilize $\Lambda$, and put

\begin{equation}\label{new lattice}
 K_{\wt G}=K_{Z^\QQ}\times U,
 \qquad
 \wt\Lambda=\Hom_{\mathcal O_E}(\Lambda_0,\Lambda)\subset\wt V.
\end{equation}

At a prime $p$, let $\mathfrak A\subset\mathfrak B$ be the local embedding sets attached to the strict fake Drinfeld signature, so that
\[
 \mathfrak B=\mathfrak A\sqcup\{\varphi_0,\bar\varphi_0\}.
\]
We denote the corresponding Eisenstein ideals by $J_{\mathfrak A}$ and $J_{\mathfrak B}$.

For an abelian scheme $A/S$, let $\mathbb D(A)$ be the vector bundle dual to $H^1_{\mathrm{dR}}(A/S)$ and write
\[
 0\longrightarrow\Fil(A)\longrightarrow\mathbb D(A)
 \longrightarrow\Lie(A)\longrightarrow0.
\]
At a place $\wt\nu\mid p$ of $\wt E$, the completed RSZ functor parametrizes tuples

\begin{equation}\label{moduli interpretation of integral RSZ}
 (A_0,\psi_0,\lambda_0,A,\psi,\lambda,\text{level data})
\end{equation}

with the usual polarization and level conditions, together with

\begin{equation}\label{Eisenstein conditions RSZ}
 \Fil(A_0)=J_\Phi\mathbb D(A_0),
 \qquad
 J_{\mathfrak B}\mathbb D(A)\subset\Fil(A)\subset J_{\mathfrak A}\mathbb D(A),
\end{equation}

and the signature $(n,1)$ condition on $J_{\mathfrak A}\Lie(A)$.

\begin{theorem}\label{Integral model of RSZ}
Under Assumption~\ref{Assumption}, the integral RSZ moduli problem with the Eisenstein, sign, wedge, and spin conditions is represented by a Deligne--Mumford stack over the appropriate integral reflex ring. For every finite place $\wt\nu$, its completion is the RSZ/LMZ relative local model, denoted by
\[
 \widehat{\mathcal M}^{\mathrm{loc}}_{\wt\nu}.
\]
All local geometric properties used below are those supplied by this relative local model.
\end{theorem}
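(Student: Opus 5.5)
The plan is to reduce Theorem~\ref{Integral model of RSZ} to the representability and local-model results of Rapoport--Smithling--Zhang and the relative local models of Luo--Mihatsch--Zhang, checking that Assumption~\ref{Assumption} puts us in their hypotheses. First, at primes $p$ where everything is unramified (all places of $F$ above $p$ unramified over $\QQ$, $p$ odd, $\Lambda$ and $\Lambda_0$ self-dual), the moduli problem \eqref{moduli interpretation of integral RSZ} with polarization kernels fixed by $\wt\Lambda$ is a PEL moduli problem with hyperspecial level. Representability by a Deligne--Mumford stack then follows from the standard Mumford/Kottwitz argument: there is a finite map to a Siegel moduli stack with suitable auxiliary level, and we quotient by the auxiliary level. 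The Kottwitz determinant condition already gives a smooth local model there, and the Eisenstein conditions \eqref{Eisenstein conditions RSZ} are implied by it.

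Second, at primes where $F/\QQ$ ramifies, Assumption~\ref{Assumption}(2) says the relevant places are unramified in $E/F$. For such places we use the Eisenstein conditions \eqref{Eisenstein conditions RSZ}, with the ideals $J_{\mathfrak A}\subset J_{\mathfrak B}$, in place of the Kottwitz condition, as in \cite{RSZ2}. The factor $A_0$ with $\Fil(A_0)=J_\Phi\mathbb D(A_0)$ is étale over the base, since $M_0^{\mathfrak a}$ is finite étale over $E_\Phi$ and extends étale integrally. The main factor is then controlled by the Grothendieck--Messing local model diagram $\wt{\mathcal M}\to\mathcal M$, $\wt{\mathcal M}\to\mathbb M^{\mathrm{loc}}$. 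Here $\mathbb M^{\mathrm{loc}}$ is the flag-type scheme of filtrations $\Fil\subset\Lambda\otimes\mathcal O_S$ sandwiched as in \eqref{Eisenstein conditions RSZ} and satisfying the $(n,1)$ signature condition on $J_{\mathfrak A}\Lie$. The conditions between $J_{\mathfrak B}$ and $J_{\mathfrak A}$ reduce to a $\GL_{n+1}$ local model at the single pair $\{\varphi_0,\bar\varphi_0\}$, which is the relative local model of \cite{RSZ2} and LMZ. At places of $F$ ramified in $E/F$, Assumption~\ref{Assumption}(4) supplies $\varpi$-modular or almost $\varpi$-modular lattices. There we impose the wedge and spin conditions (Pappas, Smithling), which give a flat local model with the stated properties. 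Both morphisms in the diagram are smooth of the same relative dimension, so the completed local rings of $\mathcal M$ are identified, up to formally smooth factors, with those of $\mathbb M^{\mathrm{loc}}$. This is the identification with $\widehat{\mathcal M}^{\mathrm{loc}}_{\wt\nu}$.

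Third, we glue over all $\wt\nu$. Representability is proved uniformly by the Siegel embedding, and the level structure away from $p$ is rigidified by the same method as for Theorem~\ref{first moduli RSZ}. Assumption~\ref{Assumption}(1) is used to make the integral reflex ring and the toric component $M_0^{\mathfrak a,\xi}$ well behaved; the construction then matches \cite[Theorem 3.5]{RSZ2} on the generic fibre.

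I expect the main obstacle to be the local step at places that are ramified in $E/F$ or where $F$ is ramified over $\QQ$. There one has to verify that the Eisenstein, wedge and spin conditions cut out exactly the relative local model, that is, a flat scheme whose generic fibre is the correct flag variety. I would not reprove this; the plan is to quote the flatness results of \cite{RSZ2} and LMZ and check only that our normalization ($n$ here equals $n-1$ there, and strict fake Drinfeld type) falls within their hypotheses.
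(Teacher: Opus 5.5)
Your proposal is correct and takes essentially the same route as the paper. The paper's proof consists only of citing the integral RSZ construction of \cite{RSZ1} together with the absolute/relative comparison of \cite[Sections~3--7]{LMZ}, and your local-model-diagram sketch unpacks exactly these inputs. (Two small corrections: the sign condition at places ramified in $E/F$ should be listed as part of the moduli problem, and the identification of the absolute Eisenstein local model with the relative one over $\mathcal O_{F_v}$ comes from the LMZ comparison you quote, not from the diagram itself.)
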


This is the integral RSZ construction of \cite{RSZ1}, together with the absolute/relative comparison of \cite[Sections~3--7]{LMZ}.

\begin{lemma}\label{Smoothness at each place}
At every finite place, the local intersection is defined on the corresponding relative Rapoport--Zink space by derived tensor product. If the local model is smooth and the two cycles are Tor-independent, this agrees with the usual intersection multiplicity.
\end{lemma}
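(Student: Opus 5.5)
The lemma has two parts. The first is essentially a definition. The second is the standard comparison between Serre's Tor formula and naive intersection multiplicity. I will handle them in that order.

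\textbf{Step 1: set up the local intersection.} Fix a finite place $\wt\nu$ of $\wt E$. By Theorem~\ref{Integral model of RSZ}, the completion of the RSZ model at $\wt\nu$ is the relative local model $\widehat{\mathcal M}^{\mathrm{loc}}_{\wt\nu}$. The Rapoport--Zink uniformization (relative version, as in \cite{LMZ}) expresses the formal completion along the supersingular or basic locus as a quotient of a relative Rapoport--Zink space $\mathcal N$ by a discrete group. Both cycles lift to $\mathcal N$: the Kudla--Rapoport divisor $\mathcal Z(x)$ and the CM cycle $\mathcal P$ (induced by the morphism of RSZ data attached to $\BW$). Both are closed formal subschemes, so their structure sheaves define classes in $K$-theory with supports.

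I will define the local intersection as
\[
\chi\bigl(\mathcal N,\ \mathcal O_{\mathcal Z(x)}\otimes^{\mathbb L}_{\mathcal O_{\mathcal N}}\mathcal O_{\mathcal P}\bigr)
=\sum_i(-1)^i\,\mathrm{length}\,\mathrm{Tor}_i^{\mathcal O_{\mathcal N}}(\mathcal O_{\mathcal Z(x)},\mathcal O_{\mathcal P}),
\]
weighted by $\log N_{\wt\nu}$ and normalized by the toric and reflex-field degrees.

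Well-definedness needs two inputs:
\begin{enumerate}[(i)]
\item The intersection $\mathcal Z(x)\cap\mathcal P$ is proper, i.e.\ supported on a scheme proper over the residue field. This holds because $\mathcal P$ is zero-dimensional over the base and the relevant $x$ are orthogonal to $\BW$.
\item The Tor sheaves vanish in large degree. For this I would use that $\mathcal Z(x)$ is a Cartier divisor on the regular (or at least locally noetherian, finite Tor-dimension) formal scheme $\mathcal N$, so $\mathcal O_{\mathcal Z(x)}$ has the two-term resolution $\mathcal O(-\mathcal Z(x))\to\mathcal O$. Hence only $\mathrm{Tor}_0$ and $\mathrm{Tor}_1$ can be nonzero, and each has finite length by properness.
\end{enumerate}

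\textbf{Step 2: compare with the classical multiplicity.} Assume now that $\widehat{\mathcal M}^{\mathrm{loc}}_{\wt\nu}$ is smooth, so $\mathcal N$ is formally smooth and hence regular. Then Serre's intersection formula applies locally at each point of the proper intersection. By definition, Tor-independence means $\mathrm{Tor}_i=0$ for $i>0$. The alternating sum then collapses to
\[
\mathrm{length}\bigl(\mathcal O_{\mathcal Z(x)}\otimes\mathcal O_{\mathcal P}\bigr)=\mathrm{length}\,\mathcal O_{\mathcal Z(x)\cap\mathcal P},
\]
which is the usual intersection multiplicity. Summing over the discrete-group orbits and applying the normalization reproduces the naive local height.

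\textbf{Main obstacle.} The delicate point is at the ramified places and at the non-smooth local models, where $\mathcal Z(x)$ may fail to be Cartier or the ambient space may be non-regular. There, finiteness of Tor-dimension must come from the relative local model description rather than from regularity. I would first try to verify that $\mathcal Z(x)$ is locally cut out by one equation, namely the obstruction to lifting the homomorphism $x$, via Grothendieck--Messing deformation theory. I would then use the resulting two-term Koszul resolution to define the derived product without any regularity assumption.
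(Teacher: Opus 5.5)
For the two assertions themselves you follow the paper. It treats the first sentence as the definition \eqref{derived finite intersection}. It treats the second as the immediate collapse of the derived tensor product to $\mathcal O_{\mathcal Z}\otimes\mathcal O_{\mathcal P}$ under Tor-independence, which is exactly your Step 2.

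The problem is your well-definedness step (i), where the orthogonality condition is backwards. If $x$ is orthogonal to $\BW$, i.e.\ $x\in W^\perp$, then $\langle e,x\rangle=0$, so the CM point already lies on $Z(x)$ on the generic fiber. Locally, the $W^\perp$-component of a special quasi-homomorphism lifts to the canonical lift whenever it is integral. Hence for integral $x\in W^\perp$ the divisor $\mathcal Z(x)$ contains the whole horizontal CM curve $\mathcal N_0\cong\Spf\mathcal O_{\breve E_\nu}$. The Tor sheaves are then not of finite length, and $\chi$ is undefined. Your remark that $\mathcal P$ is zero-dimensional over the base does not rescue this, since its integral realization is one-dimensional.

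What is true is the following. $\mathcal Z(x)\cap\mathcal N_0$ is a closed formal subscheme of $\Spf\mathcal O_{\breve E_\nu}$, so it has finite length precisely when it misses the generic point. This happens exactly when the component $y_2$ of $x$ along $W$ is nonzero, and Gross's canonical-lifting formula then bounds the order to which $y_2$ lifts. The paper does not claim properness unconditionally. It is a standing hypothesis in \eqref{derived finite intersection}, the local series are summed only over $V(v)-W^\perp$, and the terms with $x\in W^\perp$ are treated separately as degenerate pseudo-theta series (Lemma~\ref{Vanish of improper intersection}).

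With (i) corrected in this way, the rest of your argument goes through. Two small points remain. Your alternating sum of lengths of Tor sheaves agrees with the paper's hypercohomological $\chi$ only because the support is Artinian. In your Koszul plan for non-regular models, a local equation yields a two-term resolution only if it is a nonzerodivisor.
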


\begin{remark}\label{level condition remark}
The local moduli problem is relative over $\mathcal O_{F_v}$. Its duality is the Faltings duality, and its linear algebra is relative Dieudonn\'e/display theory. The local Hermitian lattice is obtained from the rational relative Dieudonn\'e module of the framing object and its relative polarization.
\end{remark}

\begin{remark}
The theorem does not assert the existence of a separate PEL integral model of $X_U$. All moduli-theoretic constructions are made on the RSZ side. In particular, at an almost $\varpi$-modular place the Hodge line is the Eisenstein-ideal line of \cite[Definition~7.1 and Remark~7.2]{LMZ}; no universal Kr\"amer hyperplane is required.
\end{remark}

\begin{remark}\label{generalized condition}
The same construction applies to more general parahoric levels if finite intersections are interpreted as derived Euler--Poincar\'e characteristics. We restrict to the levels for which the local multiplicities used below are known explicitly.
\end{remark}

We now fix the normalization. Let $T^{\xi}_{K_{Z^\QQ}}$ be the chosen zero-dimensional toric component and put
\[
 \delta_Z=\deg(T^{\xi}_{K_{Z^\QQ}}/E_\Phi),
\]
with stacky degree when necessary. By \eqref{unitary and RSZ}, the generic fiber has a projection

\begin{equation}\label{unitary and RSZ integral}
 \pi_{\mathrm{gen}}:
 M_{K_{\wt G}}(\wt G)_{\wt E}
 \cong
 (T^{\xi}_{K_{Z^\QQ}}\otimes_{E_\Phi}\wt E)
 \times_{\wt E}(X_U\otimes_E\wt E)
 \longrightarrow X_U\otimes_E\wt E.
\end{equation}

Let $\widehat D=(D,g_D)$ be a metrized divisor on $X_U$, and let $Y$ be a cycle whose generic fiber meets $D$ properly. Write $\wt{\mathcal D}$ and $\wt{\mathcal Y}$ for their RSZ realizations. At a finite place $\wt\nu$, define

\[
 \operatorname{Int}_{\wt\nu}(\wt{\mathcal D},\wt{\mathcal Y})
 :=\chi\!\left(
   \widehat{\mathcal M}^{\mathrm{loc}}_{\wt\nu},
   \mathcal O_{\wt{\mathcal D}}
   \overset{\mathbf L}{\otimes}_{\mathcal O_{\widehat{\mathcal M}^{\mathrm{loc}}_{\wt\nu}}}
   \mathcal O_{\wt{\mathcal Y}}
 \right).
\]

The right-hand side is computed by the relative Rapoport--Zink description; compare \cite[(9.7)]{MZ}.

The normalized divisor--cycle pairing is

\begin{equation}\label{normalized RSZ intersection}
 \big\langle\widehat D\cdot Y\big\rangle_U
 :=\frac{1}{\delta_Z[\wt E:E]}
 \left(
   \sum_{\wt\nu\nmid\infty}
      \operatorname{Int}_{\wt\nu}(\wt{\mathcal D},\wt{\mathcal Y})\log N\wt\nu
   +\sum_{\wt\sigma:\wt E\hookrightarrow\CC}g_{D,\wt\sigma}(Y_{\wt\sigma})
 \right).
\end{equation}

At infinity, we sum over all complex embeddings and place no additional factor $1/2$ in front of the sum. Thus a local equation $f=0$ has singular part $-\log|f|$ in this convention. Grouping conjugate embeddings recovers the usual squared-norm convention and explains the factor $1/2$ in Section~\ref{Local intersections at archimedean places}. The projection formula shows that the pairing is independent of a finite enlargement of the common reflex field and is compatible with auxiliary neat levels.

\subsubsection*{Notation convention}
In the rest of the paper, $\mathcal X_U$, $\LL_U$, $\mathcal P$, and $\mathcal Z_t$ denote the normalized RSZ arithmetic objects just defined. This notation does not posit a separate regular integral model of the non-PEL variety $X_U$ over $\mathcal O_E$.

The PEL moduli problems have natural forgetful morphisms over the common reflex field:

\begin{equation}\label{similitude and RSZ integral}
 \mathcal M_{K_{\wt G}}(\wt G)\longrightarrow\mathcal M_{K_{Z^\QQ}}(Z^\QQ),
 \qquad
 \mathcal M_{K_{\wt G}}(\wt G)\longrightarrow\mathcal M_{K_{G^\QQ}}(G^\QQ).
\end{equation}

\subsection{The Hodge bundle and CM cycle}\label{Hodge bundle and CM cycle}
Now we define two very important objects on unitary Shimura varieties: the \textit{Hodge bundle} and the \textit{CM cycle}.

\subsubsection*{The Hodge bundle}

The tautological line bundle $L_U$ pulls back under $\pi_{\mathrm{gen}}$ to an automorphic line bundle on the generic fiber of the RSZ Shimura variety. Let $(\mathcal A_0,\mathcal A)$ denote the universal pair. When $[F:\QQ]>1$, neither $\Lie(\mathcal A_0)$ nor $\Fil(\mathcal A_0)$ is a line bundle over the base: both have absolute rank $[F:\QQ]$. The one-dimensional automorphic factor is isolated by the Eisenstein ideals.

More precisely, on the completion at a finite place $\wt\nu$, let $\mathfrak A_v\subset\mathfrak B_v$ be the local embedding sets used in Section~\ref{Integral model}. Following \cite[Definition 7.1 and Remark 7.2]{LMZ}, put
\begin{equation*}
 \omega_{\mathcal A,\wt\nu}:=(J_{\mathfrak A_v}\Lie(\mathcal A))_1
\end{equation*}
and define
\begin{equation}\label{line bundle of modular forms}
 \wt{\mathcal L}_{\wt\nu}^{-1}
 :=\Omega_{\mathcal A,\wt\nu}
 :=\HHom_{\mathcal O_E\otimes\mathcal O_{\widehat{\mathcal M}^{\mathrm{loc}}_{\wt\nu}}}
 \big(\Fil(\mathcal A_0),\omega_{\mathcal A,\wt\nu}\big).
\end{equation}
Here, in the notation of \cite[Notation~6.1 and Definition~7.1]{LMZ}, $\Fil(\mathcal A_0)=J_\Phi\mathbb D(\mathcal A_0)$ is the $\mathcal O_E$-stable Hodge filtration of the universal CM abelian scheme $\mathcal A_0$ (or its relative-display counterpart after passage to the relative theory). The subscript $1$ denotes the rank-one summand singled out by the local signature/Eisenstein condition.  Thus $\omega_{\mathcal A,\wt\nu}$ is a line, and \cite[Remark 7.2]{LMZ} shows that $\Omega_{\mathcal A,\wt\nu}$ is an invertible sheaf. Thus the definition does not regard either $\Lie(\mathcal A_0)$ or $\Fil(\mathcal A_0)$ itself as a line bundle.

The local lines in \eqref{line bundle of modular forms} have the same generic fiber and determine the canonical adelic extension of the pullback of $L_U$; we denote it by $\wt{\mathcal L}$. When $F=\QQ$, polarization identifies $\Fil(\mathcal A_0)^\vee$ with $\Lie(\mathcal A_0)$, and on a model carrying a universal hyperplane $\mathcal G$ one has
\begin{equation*}
 \Omega_{\mathcal A}\cong
 \Lie(\mathcal A_0)\otimes\Lie(\mathcal A)/\mathcal G.
\end{equation*}
This recovers the familiar formula only in the situation where $\mathcal G$ exists.

At an almost $\varpi$-modular ramified place there is in general no universal hyperplane on the original RSZ model, so the last shorthand involving $\mathcal G$ is unavailable.  This causes no problem for the definition: the line in \eqref{line bundle of modular forms} is $\Omega_{\mathcal A}$ itself, defined from $\Fil(\mathcal A_0)$ and $(J_{\mathfrak A_v}\Lie\mathcal A)_1$.  We will consistently use this LMZ formulation at such places rather than introduce an auxiliary splitting datum.

\begin{remark}\label{local Hodge bundle remark}
On a relative unitary Rapoport--Zink space, the same line is constructed from the relative Hodge filtration and the Faltings dual. Under non-archimedean uniformization it agrees with \eqref{line bundle of modular forms}. This is the local Hodge line used in the computations below.
\end{remark}

At the archimedean places we endow $\wt{\mathcal L}$ with the pullback of the tautological metric \eqref{tautological bundle}. Thus, at the distinguished complex component,
\begin{equation*}
 h_{\wt{\mathcal L}}(s_z)=-q(s_z).
\end{equation*}
No additional trace normalization is inserted. The normalized RSZ arithmetic class is denoted by $\LL_U$, and on the generic fiber $\pi_{\mathrm{gen}}^*L_U\cong\wt{\mathcal L}$.

\subsubsection*{CM cycle}

Recall the orthogonal decomposition
\begin{equation*}
    \BV=W^\perp(\BA_E)\oplus\BW
\end{equation*}
from \cite[(2.1.1)]{Guo1}, and put $U_\BW=U\cap\U(\BW_{\Af})$.  The natural map of Shimura data gives a finite morphism
\begin{equation*}
 X_{\BW,U_\BW}\longrightarrow X_U.
\end{equation*}
We denote by $P_{\BW,U}\in\Ch_0(X_U)_\QQ$ the image of its fundamental class, normalized to have degree one.

For the integral realization, let $\wt G_\BW\hookrightarrow\wt G$ be the corresponding morphism of RSZ groups.  After base change to a common reflex field, the chosen toric component and the zero-dimensional unitary datum define a finite CM cycle on the RSZ model.  At a finite place $\wt\nu$, this cycle is interpreted on the relative Rapoport--Zink space and will be denoted by $\wt{\mathcal P}_{\wt\nu}$.  Dividing by the toric degree and the reflex-field degree as in \eqref{normalized RSZ intersection} gives the normalized arithmetic cycle $\mathcal P_{\BW,U}$.  Thus, in particular, $\mathcal P_{\BW,U}$ is not defined here as the Zariski closure of a cycle in a separately chosen model of $X_U$.

On the distinguished complex component, the generic CM cycle is represented by
\begin{equation}\label{CM cycle on unitary Shimura variety}
    P_\CC=[(e),1]\in G(\QQ)\backslash D\times G(\wh{\QQ})/U,
\end{equation}
where $(e)$ is the negative line generated by the fixed vector $e=\prod_v e_v\in\BW$.  We often write $P$ and $\mathcal P$ for the generic and arithmetic cycles.  The maximal-level hypothesis and the normalization imply the usual pullback compatibility under changes of level.  All finite intersections with $\mathcal P$ below are intersections with the RSZ/relative realization just described.

\section{Unitary Shimura curves}\label{Unitary Shimura curve}
In this section, we focus on discussing the one-dimensional case of the unitary Shimura variety. We will extract a special class of Shimura curves from unitary Shimura varieties of general dimension, serving as important auxiliary objects for computing the modular height of CM points. Then we compare the unitary Shimura curve with the quaternionic Shimura curve in \cite{YZZ2,YZ1,Yuan1}, thereby transferring some relevant conclusions of the latter to our case. In fact, there is no direct map between the two. Our strategy is to construct several auxiliary Shimura curves, discuss these curves and the maps of the integral models, and then use them to connect the unitary Shimura curve and the quaternionic Shimura curve.

\subsection{Special curves in unitary Shimura varieties}

To compute the modular height of the CM cycle, we choose a two-dimensional incoherent Hermitian subspace $\BV_1\subset\BV$ containing $\BW$.  Write
\begin{equation*}
 \BV=\BV_1\oplus V_1^\perp(\BA_E),
 \qquad
 \BV_1=\BW\oplus W_1^\perp(\BA_E),
\end{equation*}
and let $V_1$ be the nearby coherent space at $\iota$.  With $U_{\BV_1}=U\cap\U(\BV_{1,\BA_f})$, there are morphisms on generic fibers
\begin{equation}\label{definition of unitary Shimura curve}
 X_{\BW,U_\BW}\longrightarrow
 C_U:=X_{\BV_1,U_{\BV_1}}
 \longrightarrow X_U.
\end{equation}
Thus $P_{\BW,U}\subset C_U\subset X_U$.

The integral object used below is the corresponding RSZ curve.  More precisely, the inclusion $V_1\hookrightarrow V$ and the fixed toric datum induce a morphism of RSZ Shimura data.  After base change to a common reflex field, this yields a morphism of generic fibers and compatible morphisms of the relative local models wherever the lattice chains are compatible.  At an almost $\varpi$-modular ramified place we use the corresponding relative RSZ multichain/common-refinement moduli problem on both sides.  We denote the resulting normalized arithmetic curve by $\mathcal C_U$.  This notation does not mean the Zariski closure of $C_U$ in a separately constructed smooth model of $X_U$.

At the places where $\Lambda_1:=\Lambda\cap V_1$ has the required local type, the map of relative local models is induced directly by the orthogonal inclusion of Hermitian lattices.  The following auxiliary-data observation is used when this cannot be achieved at every place simultaneously.

\begin{remark}\label{a trick remark}
When $n=1$ (and, more generally, in the odd-dimensional steps of the induction), the local calculations at a ramified place depend only on the local isomorphism classes entering the relative Rapoport--Zink problem.  Hence the equality $d_{\BW,v}=d_{\BV,v}$ may be relaxed at ramified places, provided the required inert local data are unchanged.  Here Hermitian determinants are viewed in
\begin{equation*}
 F_v^\times/\Nm(E_v^\times).
\end{equation*}
One may therefore choose an auxiliary $\BV_1$ with the standard ramified local type and require $\BV_1$ to agree with $\BV$ away from a prescribed finite set $S$ of inert places.  The calculation then gives the desired identity modulo an $\overline\QQ$-linear combination of $\log N_v$, $v\in S$.  Repeating the construction with auxiliary sets supported over disjoint rational primes and using the linear independence of logarithms of multiplicatively independent algebraic numbers (see \cite{Wal}) removes the auxiliary error.  This is an auxiliary-place argument; it is not a claim that a single globally smooth non-PEL closure exists.
\end{remark}

Let $\LL_1$ denote the normalized Hodge line on $\mathcal C_U$, defined from its RSZ model by the analogue of \eqref{line bundle of modular forms}.  Functoriality of $\Fil(\mathcal A_0)$ and of $(J_{\mathfrak A}\Lie\mathcal A)_1$ under the morphism of RSZ data gives the following compatibility.

\begin{proposition}\label{restriction of Hodge bundle on curve}
On the generic fiber, and at every finite place on the corresponding relative RSZ local models, one has a canonical isomorphism
\begin{equation*}
 \LL\big|_{\mathcal C_U}\cong\LL_1
\end{equation*}
as normalized metrized line bundles.  Consequently, normalized divisor--CM-cycle intersections may be computed on $\mathcal C_U$.
\end{proposition}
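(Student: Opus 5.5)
The plan is to construct the isomorphism on the RSZ side, where both lines are defined by the same recipe, and then check separately that it respects the metrics and the normalizations. Let $\wt G_1\hookrightarrow\wt G$ be the morphism of RSZ data induced by $V_1\hookrightarrow V$ and the common toric datum $(Z^\QQ,V_0)$. Over the common reflex field this gives a morphism of moduli stacks. On points it sends $(A_0,\psi_0,\lambda_0,A_1,\psi_1,\lambda_1,\bar\eta_1)$ to $(A_0,\psi_0,\lambda_0,A_1\times B,\dots)$, where $B$ is a fixed CM abelian scheme of signature $(n_1',0)$ at every place, realizing $\Hom(V_0,V_1^\perp)$ relative to $A_0$ (up to isogeny, with the lattice data fixed by $\Lambda\cap V_1^\perp$). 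The toric factor $A_0$ is unchanged, so $\Fil(\mathcal A_0)$ pulls back identically. The remaining work is to compare the rank-one pieces $(J_{\mathfrak A}\Lie\mathcal A)_1$.

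\textbf{Generic fiber.} By \eqref{unitary and RSZ} the pullback line is $\pi_{\mathrm{gen}}^*L_U$. Under the complex uniformization the embedding $D(V_1)\hookrightarrow D(V)$ sends a negative line $z\subset V_{1,\iota}$ to the same line in $V_\iota$. Hence the tautological bundles restrict: $L_D|_{D(V_1)}=L_{D_1}$. The Hermitian form on $V$ restricts to the form on $V_1$, so $h_{L_D}(s_z)=-\langle s_z,s_z\rangle$ gives exactly the metric of $\LL_1$. This yields the isometry at every archimedean place $\wt\sigma$.

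\textbf{Finite places.} The Lie algebra decomposes as $\Lie(\mathcal A_1\times B)=\Lie\mathcal A_1\oplus\Lie B$, compatibly with the $\mathcal O_E$-action. The signature of $B$ at $\varphi_0$ is $(n_1',0)$, so applying $J_{\mathfrak A}$ and taking the rank-one summand singled out by the signature condition gives $(J_{\mathfrak A}\Lie(\mathcal A_1\times B))_1=(J_{\mathfrak A}\Lie\mathcal A_1)_1$: the piece of $\Lie B$ cut out by the Eisenstein condition lies entirely in the rank-$n$ part. Taking $\HHom(\Fil(\mathcal A_0),-)$ then gives \eqref{line bundle of modular forms} for $\mathcal C_U$. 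On relative Rapoport--Zink spaces the same argument works with relative displays, using Remark~\ref{local Hodge bundle remark} and the framing $\mathbb X=\mathbb X_1\times\mathbb Y$.

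\textbf{Main obstacle.} The hard part is the ramified places where $\Lambda_1$ and $\Lambda\cap V_1^\perp$ do not have a compatible (almost) $\varpi$-modular type. There the map of local models must go through the multichain/common-refinement problem. I would check that the LMZ line of \cite[Definition~7.1]{LMZ} is functorial for the morphisms of the refinement correspondence: both projections should pull the LMZ line back to the same line. Since the lines agree on the generic fiber and the local models are flat, it suffices to show the comparison map is an isomorphism at closed points. This reduces to a linear-algebra statement about the Eisenstein filtration on the special fiber. Where that fails, I would fall back on Remark~\ref{a trick remark}: pick an auxiliary $\BV_1$ of standard local type.

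\textbf{Normalizations and the consequence.} The factors $\delta_Z$ and the reflex-field degree are the same on both sides, because the toric component is shared. The projection formula for \eqref{normalized RSZ intersection} then gives the consequence. Since $\mathcal P$ factors through $\mathcal C_U$, we have $\langle\widehat D\cdot\mathcal P\rangle_U=\langle\widehat D|_{\mathcal C_U}\cdot\mathcal P\rangle_{\mathcal C_U}$, with the derived restriction. This is valid because both the derived tensor product and the archimedean Green's function value are computed by pullback along the finite morphism.
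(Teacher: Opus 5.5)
Your argument follows the paper's route. The paper justifies this proposition only by the functoriality of $\Fil(\mathcal A_0)$ and of $(J_{\mathfrak A}\Lie\mathcal A)_1$ under the morphism of RSZ data (using the common-refinement multichain model on both sides at almost $\varpi$-modular places), together with the projection formula for the normalized pairing. Your splitting $\Lie(\mathcal A_1\times B)=\Lie\mathcal A_1\oplus\Lie B$, the restriction of the tautological metric from $D(V)$ to $D(V_1)$, and the derived projection formula simply make that functoriality explicit.

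Three points to tighten:
\begin{enumerate}
\item $B$ must vary with $A_0$, namely $B=(\Lambda\cap V_1^\perp)\otimes_{\mathcal O_E}\mathcal A_0$, rather than being a fixed abelian scheme. Only then does the map lie over $(z,g_1)\mapsto(z,g_1\oplus z)$ and extend over the whole toric stack.
\item $\wt E_1$ and $\wt E$ may differ, so the normalizing degrees match only after base change to a common field. This is harmless by the invariance of \eqref{normalized RSZ intersection}.
\item Drop the fallback to Remark~\ref{a trick remark}. It replaces the curve and only recovers the downstream height identity modulo $\log N_v$ terms, not the isomorphism for the given $\mathcal C_U$.
\end{enumerate}
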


\subsection{Relation with quaternionic Shimura curves}
To further streamline our subsequent discussions, we will investigate the relationship between the unitary Shimura curve and the quaternionic Shimura curve. In fact, there exists a kind of exceptional isomorphism between these two, which allows us to transfer some results from \cite{YZZ2,YZ1,Yuan1} to the unitary Shimura curve. This idea was first proposed by Deligne.

\subsubsection*{Definition of quaternionic Shimura curves}
First, let's briefly review the definition of quaternionic Shimura curves; for a detailed discussion, we refer to \cite[Section 1.2.1]{YZZ2}. Let $\Sigma$ be a finite set of places of $F$ containing all the archimedean places and having an odd cardinality. Let $\BB$ be the totally definite incoherent quaternion algebra over $\BA_F$ with ramification set $\Sigma$, where the word "incoherent" has the same meaning as our definition, i.e., $\BB$ is not a base change of some quaternion algebra $B$ over $F$. Let $U_B\subset\BB^\times_f$ be a maximal open compact subgroup, i.e., it is the multiplicative group of maximal order $\mathcal{O}_{\BB_f}$.

In the latter discussion, we denote the associated Shimura curve over $F$ by $C_{U_B}$, which is a projective and smooth curve over $F$ descended from the analytic quotient
\begin{equation}\label{quaternionic Shimura curve}
    C_{U_B,\iota}(\CC)=(B(\iota)^\times\backslash\mathcal{H}^\pm\times\BB^\times_f/U_{B})\cup\{\mathrm{cusp}\},
\end{equation}
where $\iota:F\rightarrow\CC$ is the archimedean place of $F$ we fixed in this paper, and $B(\iota)$ is the quaternion algebra over $F$ with ramification set $\Sigma\backslash\{\sigma\}$. There is no cusp unless $|\Sigma|=1$, i.e., $F=\QQ$ and $\Sigma=\{\infty\}$. Note that $C_{U_B}$ is defined as the corresponding coarse moduli scheme, which is a projective and smooth curve over $F$. The subscript $U_B$ is used to distinguish between the quaternionic Shimura curve and the unitary Shimura curve.

Moreover, following the discussion in \cite[Section 4]{YZ1}, there is an integral model $\mathcal{C}_{U_B}$ over $\Spec\,\mathcal{O}_F$, which is smooth at all places $v$ that does not divide $d_B$. Here $d_B$ is the discriminant of $B$. But we will see from Remark \ref{split remark of quaternion} that $d_B=1$ in our case, hence $\mathcal{C}_{U_B}$ is smooth. Similar to our definition in \ref{tautological bundle}, we can also define a Hodge bundle on $\mathcal{C}_{U_B}$, which we denote by $\LL_B$. Note that the definition of $\LL_B$ is different with the definition in \cite{YZZ2,YZ1,Yuan1}. See the Remark \ref{Kodaira-Spencer remark}.

Similarly, we can define a CM point $P_{U_B}$ on $X_{U_B}$ following \cite{YZ1,Yuan1}. We choose the most special one, where $P_{U_B}$ is under complex uniformization as follows:
\begin{equation}\label{CM cycle on quaternion curve}
    P_{U_B,\CC}=[(e),1]\in B^\times\backslash \mathcal{H}^\pm\times \BB^\times_f/U_{B},
\end{equation}
where $e\in\mathcal{H}$ is the unique fixed point of $E^\times$ in $\mathcal{H}$ via the action induced by the embedding $E\hookrightarrow B(\iota)$. Taking the Zariski closure of $P_{U_B}$ in the integral model $\mathcal{X}_{U_B}$, we have a 1-cycle $\mathcal{P}_{U_B}$ of degree 1.

\subsubsection*{Quaternion algebras and Hermitian spaces}
In order to establish the connection between the corresponding Shimura curves, we first need to give, from an algebraic perspective, a direct relation between the quaternion algebra and the Hermitian space of dimension 2. The following algebraic conclusion adequately demonstrates that the isomorphism between these two types of Shimura curves is quite reasonable.
\begin{proposition}\label{isomorphism between quaternion and unitary space}
    There is a bijection
    \begin{equation*}
    \begin{aligned}
        &\{\mathrm{iso.\ class\ of\ incoherent\ totally\ definite\ quaternion\ embeddings\ of } \BA_E\} \\
        &\cong\{\mathrm{iso.\ class\ of\ incoherent\ totally\ definite\ Hermitian\ space\ over\ }\BA_E\ \mathrm{of\ dim.\ 2}\}.
    \end{aligned}
    \end{equation*}
    Here a quaternion embedding of $\BA_E$ is a quaternion algebra $\BB$ together with an embedding of $\BA_F$-algebras $\BA_E$. Moreover, if $\BB$ and a Hermitian space $\BW$ are related in this way, then
    \begin{equation*}
        \epsilon_v(\BB)=(-1,D_v)_v\cdot\epsilon_v(\BW),
    \end{equation*}
    where $\epsilon_v$ is the Hasse invariant of quaternion algebra and Hermitian space at $v$ and $D_v\in F_v^\times$ is the relative discriminant of $E_v$ over $F_v$.
\end{proposition}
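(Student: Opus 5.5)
The plan is to construct the bijection through the standard dictionary between a rank-two Hermitian space over a quadratic extension and a quaternion algebra containing that extension, working one place at a time and then assembling the local pieces adelically.

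\textbf{Local construction.} Let $v$ be a place of $F$ and $(W_v,\langle\cdot,\cdot\rangle)$ a two-dimensional Hermitian space over $E_v$. Write $\det W_v\in F_v^\times/\Nm E_v^\times$ for its determinant. Choose an orthogonal basis, so that $W_v\cong\langle a\rangle\oplus\langle b\rangle$. Define
\begin{equation*}
 B_v := E_v\oplus E_v j,\qquad j^2=-ab\ (\text{or } -\det W_v \text{ up to norms}),\qquad jx=\bar x j .
\end{equation*}
This is the quaternion algebra $(D_v,-\det W_v)_{F_v}$ equipped with the tautological embedding $E_v\hookrightarrow B_v$.

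To make the construction intrinsic, take $B_v=E_v\oplus \mathrm{End}^{\text{anti-lin}}$ realised inside $\End_{F_v}(W_v)$. The cleanest intrinsic route is to identify $W_v$ with $B_v$, carrying the Hermitian form $\langle x,y\rangle = \mathrm{pr}_{E}(x\bar y)$ scaled by a fixed constant; the reduced norm then restricts correctly on $E_v$.

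\textbf{Well-definedness and bijectivity.}
\begin{enumerate}
\item Isomorphism classes of pairs $(B_v,E_v\hookrightarrow B_v)$ are classified by $B_v$ alone, since by Skolem--Noether all embeddings are conjugate. So they are classified by the Hasse invariant $\epsilon_v(B_v)=(D_v,-\det W_v)_v$.
\item Two-dimensional Hermitian spaces are classified by $\det W_v$ modulo norms, equivalently by $\epsilon_v(W_v)=\eta_v(-\det W_v)$, following the normalisation of \cite[Section 2.1]{Guo1}.
\item At archimedean places, ``totally definite'' on both sides picks out the definite class, so the correspondence matches there as well.
\end{enumerate}
Since $\eta_v(x)=(x,D_v)_v$, the invariant formula is the bilinearity computation
\begin{equation*}
 (D_v,-\det)_v=(D_v,-1)_v\,(D_v,\det)_v ,
\end{equation*}
with the sign convention for $\epsilon_v(W)$ absorbing the remaining factor. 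This produces $\epsilon_v(\BB)=(-1,D_v)_v\,\epsilon_v(\BW)$. At places split in $E/F$, both invariants equal $1$ and $(-1,D_v)_v=1$.

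\textbf{Global assembly and incoherence.} Take the restricted products of the local bijections; at almost all places both sides are the unramified or split object. Incoherence means $\prod_v\epsilon_v=-1$ on each side. The factor $\prod_v(-1,D_v)_v=1$ by Hilbert reciprocity, so incoherence is preserved in both directions.

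\textbf{Main obstacle.} The delicate point is fixing the sign conventions so that the stated formula holds exactly rather than up to $\eta_v(-1)$. This requires using the precise definitions of $\epsilon_v(\BW)$ and of the Hermitian determinant from \cite{Guo1}. I would verify the convention by checking a single case, for example the archimedean definite case, where $(-1,-1)_\RR=-1$ must match the two definite invariants.
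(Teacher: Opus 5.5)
Your local-to-global route differs from the paper's. You classify both sides place by place by their invariants, use Skolem--Noether to see that the pair $E_v\hookrightarrow\BB_v$ is determined by $\BB_v$, match the definite classes at the real places, and transport incoherence by Hilbert reciprocity. The paper instead cites Howard and writes down the two mutually inverse constructions: $\langle b_1,b_2\rangle=\pi(b_1\bar b_2)$ on $E\oplus Ej$, and the multiplication law on $Ev\oplus(Ev)^\perp$. It leaves the invariant comparison to the reader. Your approach works directly with the incoherent adelic objects, where the paper reduces to the coherent case. The bijection part of your argument is fine. The step that is supposed to prove the displayed invariant formula, however, does not work as written.

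Your construction gives $\BB_v\cong(D_v,-\det\BW_v)_{F_v}$. Indeed, the form $\pi(x\bar y)$ on $E\oplus Ej$ is $\langle 1\rangle\oplus\langle -j^2\rangle$, so $j^2=-\det$. Hence $\epsilon_v(\BB)=\eta_v(-\det\BW_v)$.

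If, as you assert, $\epsilon_v(\BW)=\eta_v(-\det\BW_v)$, then you have proved $\epsilon_v(\BB)=\epsilon_v(\BW)$. The leftover factor $(-1,D_v)_v$ cannot be absorbed by any sign convention. It equals $-1$ at every real place, since $D_v<0$ there as $E$ is CM. It also equals $-1$ at every finite place where $-1\notin\Nm E_v^\times$.

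Your proposed sanity check actually exposes this. For positive definite $\BW_v$ at a real place, $\det\BW_v=1$, so your convention gives $\epsilon_v(\BW)=\eta_v(-1)=-1$. Meanwhile $\BB_v=\mathbb{H}$ has $\epsilon_v(\BB)=-1$. The stated formula would then predict $(-1,-1)_\RR\cdot(-1)=+1$, which is false.

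The displayed formula holds exactly when the Hermitian invariant is the untwisted one, $\epsilon_v(\BW)=\eta_v(\det\BW_v)=(\det\BW_v,D_v)_v$. With that convention it is immediate from bilinearity: $(D_v,-\det\BW_v)_v=(D_v,-1)_v\,(D_v,\det\BW_v)_v$. So you must fix this convention explicitly rather than appeal to it absorbing a factor. If the source you follow uses the $(-1)^{n(n-1)/2}$-twisted invariant, the correct statement is $\epsilon_v(\BB)=\epsilon_v(\BW)$. The incoherence comparison is unaffected either way, because $\prod_v(-1,D_v)_v=1$.
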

\begin{proof}
    We refer to \cite[Proposition 3.1.4]{How}. Here we provide the specific correspondence of this bijection. Note that this bijection still holds when considering the nearby coherent spaces respect to $\iota$, hence it is sufficient to give the explicit bijection in the coherent case.

    On the one hand, given a quaternion embedding $E\hookrightarrow B$, $B$ then has a decomposition
    \begin{equation*}
        B=E\oplus Ej,
    \end{equation*}
    where $j\in B^\times$ is the unique element up to $E^\times$-scaling satisfying $ej=j\overline{e}$ for all $e\in E$. Then the pairing $\langle\cdot,\cdot\rangle:B\times B\rightarrow E$ defined by
    \begin{equation*}
        \langle b_1,b_2\rangle=\pi(b_1 \overline{b}_2)
    \end{equation*}
    is a non-degenerate Hermitian form on $B$. Here $\pi:B\rightarrow E$ is the projection to the first component under the decomposition. Note that under this definition, the reduced norm of the quaternion algebra agrees with the Hermitian norm of Hermitian space.

    On the other hand, given a non-degenerate Hermitian space $V$ of dimension 2 over $E$, let $v$ be a vector in $V$ satisfying $\langle v,v\rangle=1$. We have a decomposition
    \begin{equation*}
        V=Ev\oplus (Ev)^\perp.
    \end{equation*}
    Then the multiplication structure is defined by
    \begin{equation*}
        (\alpha v+w)(\alpha' v+w')=(\alpha\alpha'-\langle w,w'\rangle)v+(\alpha w'+\overline{\alpha}'w),
    \end{equation*}
    where $\alpha,\alpha'\in E$ and $w,w'\in(Ev)^\perp$.

    Note that
    \begin{equation}\label{UB}
        U(B)\cong (B^\times \times E^\times)^1/ F^\times,
    \end{equation}
    where the superscript 1 means that it consists of elements $(b,e)$ such that $\nu(b)=\Nm_{E/F}(e)$ and $F^\times$ embeds diagonally. Similarly,
    \begin{equation}\label{GUB}
        \GU(B)\cong(B^\times \times E^\times)/ F^\times
    \end{equation}
    and the similitude factor is given by $(b,e)\mapsto\nu(b)\Nm_{E/F}(e)^{-1}$. Here $\nu$ is the reduced norm of $B$. From the construction of the above bijection, it is also not difficult to see the correspondence of the Hasse invariant.
\end{proof}

\begin{example}\label{example of orthonormal case}
    To help the reader understand this bijection, we provide a simple example. Suppose $V$ has an orthonormal basis $\{e_1,e_2\}$ over $E$, and we use coordinate
    \begin{equation*}
        (a,b):=ae_1+be_2,\quad a,b\in E
    \end{equation*}
    to denote the vector in $V$. Then the multiplicative structure of quaternion algebra is defined by
    \begin{equation*}
        (a,b)\times(c,d)=(ac-b\overline{d},ad+b\overline{c}),
    \end{equation*}
    and the main involution is defined by
    \begin{equation*}
        \overline{(a,b)}=(\overline{a},-b).
    \end{equation*}
\end{example}

\begin{remark}\label{split remark of quaternion}
    A crucial observation is that, under the Assumption \ref{Assumption}, the quaternion algebra $\BB$ corresponds to the Hermitian space $\BV_1$ associated with our unitary Shimura curve $C_U$ is split at every finite place, i.e., $\epsilon_v(\BB)=1$ for any finite place $v$. Particularly, if $F=\QQ$, this actually implies that the chosen unitary Shimura curve corresponds to a modular curve.
\end{remark}

\subsubsection*{Maximal orders and Hermitian lattices}
Note that merely knowing the correspondence between the Hermitian space and the quaternion algebra is not sufficient; we also need to compare the level groups between the two Shimura curves. In fact, under the bijection in Proposition \ref{isomorphism between quaternion and unitary space}, we can further calculate to confirm the relationship between the Hermitian lattice $\Lambda$ and the maximal order $\mathcal{O}_B$. For this purpose, we have the following proposition.

\begin{proposition}\label{relation of lattice and order}
    Suppose $\BB$ is the quaternion algebra corresponding to $\BV_1$ under the bijection in Proposition \ref{isomorphism between quaternion and unitary space}. If $v$ is unramified in $E/F$, the self-dual lattice $\Lambda_{1,v}\subset\BV_{1,v}$ corresponds to the maximal order $\mathcal{O}_{\BB_v}$; if $v$ is ramified in $E/F$, the $\varpi_{E_v}$-modular lattice $\Lambda_{1,v}$ corresponds to an order $\mathcal{O}_{\BB_v}^\vee\subset\mathcal{O}_{\BB_v}$, where $\mathcal{O}_{\BB_v}^\vee$ is the dual of $\mathcal{O}_{\BB_v}$ under Hermitian pairing up to isomorphism.
\end{proposition}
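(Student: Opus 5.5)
The plan is a purely local computation at each finite place $v$ of $F$, using the explicit dictionary of Proposition~\ref{isomorphism between quaternion and unitary space}. Fix $v$ and write $\BV_{1,v}=E_v e_1\oplus E_v e_2$ for a suitable basis adapted to the lattice $\Lambda_{1,v}$. Since the reduced norm equals the Hermitian norm, the reduced trace pairing $\mathrm{tr}(b_1\bar b_2)$ on $\BB_v$ equals $\mathrm{Tr}_{E_v/F_v}\langle b_1,b_2\rangle$. Hence Hermitian duality and quaternionic trace duality of $\mathcal O_{F_v}$-lattices differ exactly by the inverse different of $E_v/F_v$. This identity is the bridge I will use to translate lattice statements into order statements.

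\emph{Unramified case.} By Remark~\ref{split remark of quaternion}, $\BB_v\cong M_2(F_v)$. At an inert place, Assumption~\ref{Assumption}(3) gives a self-dual $\Lambda_{1,v}$, which has an orthonormal basis $e_1,e_2$ after adjusting by units; the split case is handled similarly with a hyperbolic basis. I choose the unit vector $v=e_1$ in the construction. Then Example~\ref{example of orthonormal case} gives the multiplication
\[(a,b)(c,d)=(ac-b\bar d,\;ad+b\bar c).\]
It follows that $\mathcal O_{E_v}e_1\oplus\mathcal O_{E_v}e_2$ is closed under multiplication and under the main involution, so it is an order. Its reduced discriminant is computed from the Gram matrix of the trace form. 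Since $\Lambda_{1,v}$ is Hermitian self-dual and the different is trivial, the order is trace self-dual. A trace self-dual order in $M_2(F_v)$ is maximal, which gives $\Lambda_{1,v}\leftrightarrow\mathcal O_{\BB_v}$.

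\emph{Ramified case.} Here $\Lambda_{1,v}$ is $\varpi_{E_v}$-modular, i.e. $\Lambda^\vee=\varpi^{-1}\Lambda$. Such a lattice has a hyperbolic-type basis $e_1,e_2$ with $\langle e_1,e_2\rangle=\varpi$ and $e_i$ isotropic, up to the local determinant class. The multiplication formula requires a vector of norm $1$, which does not lie in $\Lambda_{1,v}$. I will therefore pick $v=\alpha e_1+\beta e_2$ with $\mathrm{Tr}(\alpha\bar\beta\varpi)=1$; this is possible because the trace form is surjective onto $F_v$ after rescaling, and Remark~\ref{a trick remark} allows adjusting the determinant class if needed. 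In this basis I will write out the multiplication and check that the $\mathcal O_{F_v}$-span of $\Lambda_{1,v}$ together with $1=v$ is an order. The trace-duality identity then shows that the trace-dual of $\Lambda_{1,v}$ equals $\varpi_{E_v}^{-1}\mathfrak d^{-1}\Lambda_{1,v}$. Comparing indices, $\Lambda_{1,v}$ identifies, up to isomorphism, with the Hermitian dual $\mathcal O_{\BB_v}^\vee$ of a maximal order $\mathcal O_{\BB_v}\supset\Lambda_{1,v}$; equivalently, $\Lambda_{1,v}$ is an index-$N_v$ sublattice of $M_2(\mathcal O_{F_v})$ stable under multiplication.

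\emph{Main obstacle.} I expect the hardest step to be the ramified case. The identification depends on the choice of the unit vector $v$, so the result holds only up to isomorphism. Fixing the correct normalization of the different, so that the index comes out exactly right, requires care. As a check, I will verify the index claim directly by computing the discriminant of the trace form on $\Lambda_{1,v}$ in the explicit basis.
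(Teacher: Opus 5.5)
Your unramified case is fine and is essentially the paper's argument; you justify the ``well-known'' maximality of $\mathcal{O}_{E_v}\oplus\mathcal{O}_{E_v}j_v$ by trace self-duality. The ramified case has a genuine gap. What must be shown is that, for a suitable unit vector, the Hermitian dual $\Lambda_{1,v}^\vee=\varpi_{E_v}^{-1}\Lambda_{1,v}$ is itself a maximal order, so that $\Lambda_{1,v}=(\Lambda_{1,v}^\vee)^\vee=\mathcal{O}_{\BB_v}^\vee$. You never prove that $\Lambda_{1,v}^\vee$ is closed under multiplication. You check that $\mathcal{O}_{F_v}\cdot 1+\Lambda_{1,v}$ is an order, compute the trace dual of $\Lambda_{1,v}$, and then ``compare indices''. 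Index counting cannot select the right maximal order. Take the paper's model: $\varpi_{E_v}\mapsto\matrixx{0}{-p_v}{1}{0}$ and $j_v\mapsto\mathrm{diag}(1,-1)$, so $\Lambda_{1,v}=\{\matrixx{p_vx}{p_vy}{z}{w}\}$. Then both $M_2(\mathcal{O}_{F_v})$ and $\mathcal{R}=\matrixx{\mathcal{O}_{F_v}}{p_v\mathcal{O}_{F_v}}{p_v^{-1}\mathcal{O}_{F_v}}{\mathcal{O}_{F_v}}$ are maximal orders containing $\mathcal{O}_{E_v}$ and $\Lambda_{1,v}$. They have the same index $N_v^2$ over $\Lambda_{1,v}$; they are the two maximal orders containing your Eichler order $\mathcal{O}_{F_v}+\Lambda_{1,v}$. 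Yet $\mathcal{R}^\vee=\varpi_{E_v}\mathcal{R}=\{\matrixx{x}{p_vy}{z}{p_vw}\}\neq\Lambda_{1,v}$, so only $M_2(\mathcal{O}_{F_v})=\Lambda_{1,v}^\vee$ works. Your ``equivalent'' reformulation is also wrong. Since $\Lambda_{1,v}=\varpi_{E_v}M_2(\mathcal{O}_{F_v})$ and $\mathrm{Nrd}(\varpi_{E_v})$ has valuation one, $[M_2(\mathcal{O}_{F_v}):\Lambda_{1,v}]=N_v^2$, not $N_v$. The index-$N_v$ lattice is the Eichler order $\mathcal{O}_{F_v}+\Lambda_{1,v}$. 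Moreover, being a multiplicatively closed sublattice of a maximal order does not make $\Lambda_{1,v}$ its Hermitian dual.

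The repair is short and lands on the paper's proof. Your unit vector $e_1-\frac{1}{2\varpi_{E_v}}e_2$ lies in $\Lambda_{1,v}^\vee$. No appeal to Remark~\ref{a trick remark} is needed; that remark is a global auxiliary device, and a hyperbolic plane always contains norm-one vectors. Set $j:=e_1+\frac{1}{2\varpi_{E_v}}e_2$, which is orthogonal to the unit vector. Then $j^2=1$, $e_1=\frac12(1+j)$ and $e_2=\varpi_{E_v}(j-1)$. Hence $\Lambda_{1,v}=\mathcal{O}_{E_v}(1+j)+\mathcal{O}_{E_v}\varpi_{E_v}(1-j)$, which is the paper's lattice after $j\mapsto -j$. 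Now verify directly that $\Lambda_{1,v}^\vee$ is closed under multiplication; in the matrix model it is $M_2(\mathcal{O}_{F_v})$. Write $\Lambda^\#$ for the reduced-trace dual. Your own trace identity gives $(\Lambda_{1,v}^\vee)^\#=\mathfrak{d}_{E_v/F_v}^{-1}\Lambda_{1,v}=\varpi_{E_v}^{-1}\Lambda_{1,v}=\Lambda_{1,v}^\vee$. So this order is trace self-dual, hence maximal, and $\Lambda_{1,v}=(\Lambda_{1,v}^\vee)^\vee$ is the dual of a maximal order, as claimed.
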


\begin{proof}
    The proof is totally linear algebra. Recall that in the proof of Proposition \ref{isomorphism between quaternion and unitary space}, we have a decomposition
    \begin{equation*}
        \BB_v=E_v\oplus E_vj_v,
    \end{equation*}
    and we may further assume $v(q(j_v))=0$. Then, in the case when $v$ is unramified in $E/F$, the self-dual lattice $\Lambda_{1,v}$ is identified with $\mathcal{O}_{E_v}\oplus \mathcal{O}_{E_v}j_v$, while it is well-known that in this case
    \begin{equation*}
        \mathcal{O}_{\BB_v}=\mathcal{O}_{E_v}\oplus \mathcal{O}_{E_v}j_v.
    \end{equation*}

    It remains to check the case when $v$ is ramified in $E/F$. As we explained in Remark \ref{split remark of quaternion}, $\BB_v$ is always split, hence we can assume without loss of generality that the quaternion embedding is defined by
    \begin{equation*}
        x\mapsto\matrixx{x}{}{}{x},\quad \varpi_{E_v}\mapsto\matrixx{}{-p_v}{1}{},
    \end{equation*}
    where $x$ is any element in $F_v$ and $p_v$ is a uniformizer of $F_v$. Then by our definition of $j_v$, we can further assume that
    \begin{equation*}
        j_v\mapsto\matrixx{1}{}{}{-1}.
    \end{equation*}
    Note that up to isomorphism, $\Lambda_{1,v}$ is generated over $\mathcal O_{E_v}$ by $\varpi_{E_v}(1+j_v)$ and $(1-j_v)$.  In the above matrix realization this gives
    \begin{equation*}
        \Lambda_{1,v}
        =\left\{\matrixx{p_vx}{p_vy}{z}{w}:x,y,z,w\in\mathcal O_{F_v}\right\}
        \subset M_2(\mathcal O_{F_v})=\mathcal O_{\BB_v}.
    \end{equation*}
    Similarly, the dual lattice $\Lambda_{1,v}^\vee$ is generated over $\mathcal O_{E_v}$ by $(1+j_v)$ and $\varpi_{E_v}(1-j_v)$, and hence
    \begin{equation*}
        \Lambda_{1,v}^\vee
        =\left\{\matrixx{x}{y}{z}{w}:x,y,z,w\in\mathcal O_{F_v}\right\}
        =M_2(\mathcal O_{F_v})=\mathcal O_{\BB_v}.
    \end{equation*}

\end{proof}

In fact, we can describe the relationship between lattices in Hermitian space and maximal orders in quaternion algebra in a more general setting. Recall that we introduce the almost self-dual lattice $\Lambda_v$ below Assumption \ref{Assumption}. In the following proposition, we assume that for a finite place $v$, if $v$ is inert then $\BV_{1,v}$ is nonsplit, i.e., $\epsilon(\BV_{1,v})=-1$ and then $\BV_{1,v}$ contains an almost self-dual lattice; if $v$ is ramified then $\BV_{1,v}$ \textit{does not} contain a $\varpi_{E_v}$-modular lattice, i.e., $(-1,D_v)_v\cdot\epsilon_v(\BW)=-1$. It is not hard to see that for these two cases, the corresponding quaternion algebras $\BB_v$ of $\BV_{1,v}$ under the bijection in Proposition \ref{isomorphism between quaternion and unitary space} are both nonsplit, i.e., $\epsilon(\BB_v)=-1$.

\begin{proposition}\label{relation of lattice and order nonsplit case}
    Suppose $\BB$ is the quaternion algebra corresponding to $\BV_1$ under the bijection in Proposition \ref{isomorphism between quaternion and unitary space}, such that $\BB_v$ is a division quaternion algebra over $F_v$ at a finite place $v$. If $v$ is unramified in $E/F$, the almost self-dual lattice $\Lambda_{1,v}\subset\BV_{1,v}$ corresponds to the maximal order $\mathcal{O}_{\BB_v}$; if $v$ is ramified in $E/F$, the self-dual lattice $\Lambda_{1,v}$ also corresponds to the maximal order $\mathcal{O}_{\BB_v}$.
\end{proposition}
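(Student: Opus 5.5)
The argument will mirror the proof of Proposition \ref{relation of lattice and order}: it is linear algebra in the local decomposition $\BB_v=E_v\oplus E_vj_v$ coming from the proof of Proposition \ref{isomorphism between quaternion and unitary space}. Under this decomposition the Hermitian norm equals the reduced norm $\nu$. The structural input is that a division quaternion algebra $\BB_v$ has a \emph{unique} maximal order,
\begin{equation*}
\mathcal O_{\BB_v}=\{b\in\BB_v:\ v(\nu(b))\ge 0\}.
\end{equation*}
So it suffices to show that the given lattice $\Lambda_{1,v}$, viewed inside $\BB_v$, equals this integral-norm set. The plan is to write $\Lambda_{1,v}$ explicitly in terms of $1$ and $j_v$, and then compare valuations of $\nu$ on each summand.

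\emph{Inert case.} Here $E_v/F_v$ is the unramified quadratic extension, and since $\BB_v$ is division we cannot rescale $j_v$ to a unit norm. Indeed, if $-\nu(j_v)=-q(j_v)$ were a norm from $E_v$, then $\BB_v$ would split. So after scaling $j_v$ by $E_v^\times$ we may assume $v(q(j_v))=1$. For $b=\alpha+\beta j_v$ we have $\nu(b)=\Nm(\alpha)-\Nm(\beta)q(j_v)$ up to the sign convention. The two terms have valuations of different parity, even and odd respectively, because $E_v/F_v$ is unramified. Hence
\begin{equation*}
v(\nu(b))=\min\bigl(v(\Nm\alpha),\,v(\Nm\beta)+1\bigr),
\end{equation*}
and the maximal order is $\mathcal O_{E_v}\oplus\mathcal O_{E_v}j_v$. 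Its Hermitian dual is $\mathcal O_{E_v}\oplus\varpi_v^{-1}\mathcal O_{E_v}j_v$, which contains $\mathcal O_{E_v}\oplus\mathcal O_{E_v}j_v$ with colength one. This is exactly the almost self-dual condition $\Lambda\subset\Lambda^\vee\subset\varpi^{-1}\Lambda$ with $\Lambda^\vee/\Lambda$ of length one. Since an almost self-dual lattice in a nonsplit inert plane is unique up to isometry, this identifies $\Lambda_{1,v}$ with $\mathcal O_{\BB_v}$.

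\emph{Ramified case.} Now $E_v$ is ramified and $\BB_v$ is division, so $-q(j_v)$ is a non-norm. We can normalize $v(q(j_v))=0$, with $-q(j_v)$ a unit non-norm. Write $\nu(\alpha+\beta j_v)=\Nm\alpha-\Nm\beta\, q(j_v)$. If $v(\nu(b))>\min(v(\Nm\alpha),v(\Nm\beta))$, then after dividing by a suitable power of $\varpi_{E_v}$ we would have $\Nm(\alpha/\beta)\equiv q(j_v)$ modulo higher order. Since the norm group is open, this would force $-q(j_v)$, up to the sign convention, to lie in $\Nm E_v^\times$, a contradiction. Hence the valuation is again the minimum, and
\begin{equation*}
\mathcal O_{\BB_v}=\mathcal O_{E_v}\oplus\mathcal O_{E_v}j_v,
\end{equation*}
which is self-dual for the form $\langle b_1,b_2\rangle=\pi(b_1\bar b_2)$ by the diagonal computation. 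Uniqueness of self-dual lattices in the given ramified Hermitian plane, under the hypothesis that it has no $\varpi_{E_v}$-modular lattice, then gives the claim.

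The main obstacle is the ramified step. The sign conventions relating $\nu(\alpha+\beta j)$ to $\Nm\alpha\pm\Nm\beta\,q(j)$ must be tracked carefully, and the statement that a unit non-norm prevents cancellation must be justified when the residue characteristic is odd. This is guaranteed by Assumption \ref{Assumption}(2). I would handle it first by reducing modulo $\varpi_{E_v}$: the residue field is $k_v$ and the norm map there is squaring, so cancellation would make $q(j_v)$ a square in $k_v$, hence by Hensel's lemma a norm. That contradicts $\epsilon(\BB_v)=-1$, via the Hasse invariant formula of Proposition \ref{isomorphism between quaternion and unitary space}.
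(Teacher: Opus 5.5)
Your proposal is correct and is essentially the paper's proof: the division hypothesis makes $\mathcal O_{\BB_v}$ the set of integral-norm vectors, the inert case is the parity-of-valuation argument for the diagonal form with values $1$ and $q(j_v)$, $v(q(j_v))=1$, and the ramified case is the no-cancellation claim for $\Nm\alpha+u\Nm\beta$ with $-u$ a unit non-norm in odd residue characteristic (you go from the order to the lattice and then invoke uniqueness of the (almost) self-dual lattice, whereas the paper starts from an orthogonal basis of $\Lambda_{1,v}$). To settle your sign caveat: $\nu(\alpha+\beta j_v)=\Nm\alpha-\Nm\beta\,j_v^2=\Nm\alpha+\Nm\beta\,q(j_v)$, so a cancellation makes $j_v^2=-q(j_v)$ a square in the residue field, hence by Hensel a norm, which directly contradicts $\BB_v$ being division (it splits iff $j_v^2\in\Nm E_v^\times$), without needing the Hasse-invariant formula.
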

\begin{proof}
    As we assume $\BB_v$ is a division algebra, it is sufficient to check that $\Lambda_{1,v}$ consists of vectors $x$ such that $v(q(x))\ge 0$, since the Hermitian norm of Hermitian space and reduced norm of quaternion algebra are compatible.

    For the inert case, note that by definition, the almost self-dual lattice $\Lambda_{1,v}$ is generated by an orthogonal basis $\{e_1,e_2\}$ such that
    \begin{equation*}
        v(q(e_1))=0,\ v(q(e_2))=1.
    \end{equation*}
    Then the Hermitian norm of all vectors in the 1-dimensional subspace generated $e_1$ has even valuation, while the Hermitian norm of all vectors in the subspace generated $e_2$ has odd valuation. Then the statement is obvious.

    For the ramified case, the division condition is equivalent to the associated Hermitian plane being anisotropic.  Since the residue characteristic is odd under Assumption~\ref{Assumption}, we may diagonalize it in the form
    \begin{equation*}
        q(ae_1+be_2)=\Nm(a)+u\Nm(b),
        \qquad u\in\mathcal O_{F_v}^{\times},\quad -u\notin\Nm(E_v^{\times}).
    \end{equation*}
    We claim that
    \begin{equation}\label{anisotropic ramified norm valuation}
        v\bigl(\Nm(a)+u\Nm(b)\bigr)
        =\min\{v(\Nm(a)),v(\Nm(b))\}.
    \end{equation}
    The only possible failure would occur when the two valuations are equal and the leading terms cancel.  After dividing by the common power of the uniformizer and reducing modulo the maximal ideal, such a cancellation would force $-u$ to lie in the norm class of the ramified quadratic extension; by the usual norm criterion for a ramified quadratic extension of odd residue characteristic, this contradicts $-u\notin\Nm(E_v^{\times})$.  Thus \eqref{anisotropic ramified norm valuation} holds.  Consequently $q(ae_1+be_2)$ is integral if and only if $a,b\in\mathcal O_{E_v}$.  The integral-norm lattice is therefore the standard self-dual lattice $\mathcal O_{E_v}e_1\oplus\mathcal O_{E_v}e_2$, and under the Hermitian--quaternion identification it is the inverse image of
    \begin{equation*}
        \{x\in\BB_v:v(\operatorname{Nrd}(x))\ge0\}=\mathcal O_{\BB_v}.
    \end{equation*}
    This proves the ramified division case.
\end{proof}

Combine Proposition \ref{relation of lattice and order} and \ref{relation of lattice and order nonsplit case}, we actually show that in \textit{all} cases, there is a beautiful relation between Hermitian lattices and maximal orders under the bijection in Proposition \ref{isomorphism between quaternion and unitary space}. We summarize our discussion in the following Figure \ref{Hermitian lattice and maximal order}.

\begin{figure}[ht]
    \centering
\begin{center}
\begin{equation*}
    \mathrm{maximal\ order}\longleftrightarrow\left\{
    \begin{aligned}
        \nonumber
        &\mathrm{self\ dual\ lattice} \ \ \  (v\ \mathrm{split});\\
        &\mathrm{self\ dual\ lattice} \ \ \  (v\ \mathrm{inert},\ \epsilon(\BB_v)=1);\\
        &\mathrm{almost\ self\ dual\ lattice} \ \ \  (v\ \mathrm{inert},\ \epsilon(\BB_v)=-1);\\
        &\mathrm{dual\ of}\ \varpi_{E_v}\mathrm{-modular\ lattice} \ \ \  (v\ \mathrm{ramified},\ \epsilon(\BB_v)=1);\\
        &\mathrm{self\ dual\ lattice} \ \ \  (v\ \mathrm{ramified},\ \epsilon(\BB_v)=-1).
    \end{aligned}
    \right.
\end{equation*}
\end{center}
\caption{Hermitian lattice and maximal order}
\label{Hermitian lattice and maximal order}

\end{figure}

\subsubsection*{Auxiliary Shimura curves}

We now compare the unitary Shimura curve with the quaternionic curve through PEL-type auxiliary Shimura curves.  All varieties in this paragraph are first defined over their own reflex fields and are then base changed to a common finite extension before the maps are formed.

Put
\begin{equation*}
 G_1^\QQ:=\BG_m\times_{\Res_{F/\QQ}\BG_m}\Res_{F/\QQ}\GU(V_1),
 \qquad
 \wt G_1:=Z^\QQ\times_{\BG_m}G_1^\QQ.
\end{equation*}
Thus the similitude factor in $G_1^\QQ$ is required to be rational.  Let $r_1$ be the rank-two strict fake Drinfeld generalized CM type attached to $V_1$.  The reflex fields of the unitary-similitude and RSZ data are, respectively,
\begin{equation*}
 E_{r_1},
 \qquad
 \wt E_1:=E_\Phi E_{r_1}.
\end{equation*}
Neither field is to be identified with the CM field $E$ without an additional hypothesis; in particular, for rank two the exceptions in \cite[Example 2.3]{RSZ2} must be kept in mind.

Let $K_{\wt G_1}=K_{Z^\QQ}\times U_{\Lambda_1}$.  The product decomposition of the RSZ group gives, over $\wt E_1$,
\begin{equation}\label{unitary and RSZ curve}
 \wt C_U:=S(\wt G_1,\{h_{\wt G_1}\})_{K_{\wt G_1}}
 \cong
 \big(T_{K_{Z^\QQ}}\otimes_{E_\Phi}\wt E_1\big)
 \times_{\wt E_1}
 \big(C_U\otimes_E\wt E_1\big).
\end{equation}
The corresponding RSZ integral model is denoted by $\wt{\mathcal C}_U$.  At an almost $\varpi$-modular place this notation refers to the corresponding completed RSZ model and its relative local description.  The map to the unitary curve that is used in the normalization is the generic projection
\begin{equation}\label{morphism from RSZ to unitary curve}
 \pi_{1,\mathrm{gen}}:\wt C_U\longrightarrow C_U\otimes_E\wt E_1;
\end{equation}
we do not assert that it extends to a morphism to a separate integral model of the non-PEL curve.  The RSZ Hodge line $\wt{\mathcal L}_1$ is the pullback of the automorphic line on the generic fiber, and its integral definition is the rank-one line $\Omega_A$ of \eqref{line bundle of modular forms}.

Via the quaternionic realization of $V_1$ from Propositions~\ref{isomorphism between quaternion and unitary space}, \ref{relation of lattice and order}, and~\ref{relation of lattice and order nonsplit case}, the group denoted $G'$ in \cite[Section 3]{YZ1} identifies with $G_1^\QQ$.  With $U'\subset G'(\Af)$ chosen compatibly, its Shimura curve is
\begin{equation}\label{C' curve}
 C'_{U'}(\CC)=G'(\QQ)\backslash\mathcal H^\pm\times G'(\Af)/U'.
\end{equation}
Its reflex field is $E_{r_1}$, not in general the CM field $E$.  The PEL moduli problem gives an integral model $\mathcal C'_{U'}$ over its reflex field (with the corresponding relative local model at bad places) and a metrized automorphic line $\LL'$.  The projection $\wt G_1\to G_1^\QQ$ induces, after base change to $\wt E_1$, a morphism of PEL integral models
\begin{equation}\label{morphism between auxiliary curves}
 \wt{\mathcal C}_U\longrightarrow
 \mathcal C'_{U'}\otimes_{\mathcal O_{E_{r_1}}}\mathcal O_{\wt E_1}.
\end{equation}
The pullback of $\LL'$ is canonically $\wt{\mathcal L}_1$; locally this follows from the functoriality of the Hodge filtration, and in the relative category it uses the Faltings dual.

\begin{remark}\label{Kodaira-Spencer remark}
The automorphic line used in this paper is not the same normalization as the quaternionic Hodge line in \cite{YZZ2,YZ1,Yuan1}.  On the common generic fiber, the latter corresponds to twice our tautological line.  The comparison of the integral metrics and finite extensions is made through the PEL curves above and the Kodaira--Spencer comparison in the cited works; it should not be inferred from a formula involving $\Lie(A_0)$ as an absolute line bundle.  Since the quaternion algebra used here is split at every finite place (Remark~\ref{split remark of quaternion}), the finite discriminant correction in the quaternionic comparison is absent.  The line factor $1/2$, the degree-$2$ passage from $F$ to $E$, and the different denominator in the definition of the curve height are kept separate below; their combined effect is the precise identity \eqref{comparison of curve height normalizations}.
\end{remark}

Finally, let $G''$ and $C''_{U''}$ be the similitude group and Shimura curve of \cite[Section 5]{YZ1}.  We write $E''$ for its reflex field, rather than identifying it a priori with $E$.  Its complex uniformization is
\begin{equation}\label{C'' curve}
 C''_{U'',\iota}(\CC)=G''(\QQ)\backslash\mathcal H^\pm\times G''(\Af)/U''.
\end{equation}
Let $E^{\mathrm{aux}}$ be a finite field containing $\wt E_1$, $E''$, and the reflex field of the quaternionic curve.  The morphisms constructed in \cite[Section 5.3]{YZ1} extend, after base change to $\mathcal O_{E^{\mathrm{aux}}}$ and with the corresponding local modifications, to morphisms of PEL models
\begin{equation}\label{CB, C' and C'' curve}
 \mathcal C_{U_B}\longrightarrow\mathcal C''_{U''},
 \qquad
 \mathcal C'_{U'}\longrightarrow\mathcal C''_{U''}.
\end{equation}
Their Hodge lines satisfy the normalized compatibility
\begin{equation*}
 \frac12\LL_B\longleftrightarrow\LL'\longleftrightarrow\LL''.
\end{equation*}
At a nonregular absolute fiber, this statement means equality through the corresponding relative RSZ local model, and intersections are understood by derived Euler characteristics.

\subsubsection*{Summarization}

The comparison may be summarized over the common field $E^{\mathrm{aux}}$ by the following diagram.  The solid arrows are morphisms of PEL models; the dashed arrow is the generic projection used to normalize the non-PEL unitary curve.

\begin{figure}[ht]
\centering
\begin{tikzpicture}[squarednode/.style={rectangle, draw=red!60, fill=red!5, very thick, minimum size=5mm},squarednode2/.style={rectangle, draw=green!60, fill=green!5, very thick, minimum size=5mm}]
\node[squarednode]  (1) {$C_U$};
\node[squarednode2] (2) [above right=1cm of 1] {$\wt{\mathcal C}_U$};
\node[squarednode2] (3) [below right=1cm of 2] {$\mathcal C'_{U'}$};
\node[squarednode]  (4) [below right=1cm of 3] {$\mathcal C''_{U''}$};
\node[squarednode]  (5) [above right=1cm of 4] {$\mathcal C_{U_B}$};
\draw[->,dashed] (2.south west) -- (1.north east);
\draw[->] (2.south east) -- (3.north west);
\draw[->] (3.south east) -- (4.north west);
\draw[->] (5.south west) -- (4.north east);
\end{tikzpicture}
\caption{Comparison of the auxiliary curves over a common reflex field}
\label{relation of curves}
\end{figure}

The CM cycles are compatible under the solid PEL morphisms, and the generic CM cycle on $C_U$ is obtained from the RSZ one by the normalized projection.  The same is true for the metrized automorphic lines, with the factor $1/2$ on the quaternionic side explained in Remark~\ref{Kodaira-Spencer remark}.  Therefore the projection formula for the normalized RSZ pairing transfers the CM-height and modular-height formulas from the quaternionic curve to the unitary curve without assuming that all displayed absolute integral models are smooth.

\subsection{Proof of modular height formulas}

We now prove Theorems~\ref{Modular height of CM point} and~\ref{Modular height of curve}.  Every comparison is made after base change to the common field $E^{\mathrm{aux}}$ of the preceding subsection, and every degree is divided by the corresponding reflex-field and toric degrees.  Hence the projection formula allows us to suppress these auxiliary factors from the notation.

\subsubsection*{Modular heights of CM points}
By Proposition~\ref{restriction of Hodge bundle on curve}, the normalized Hodge line and the normalized CM cycle restrict compatibly to the RSZ realization of the unitary Shimura curve.  Therefore
\begin{equation*}
 \LL\cdot\mathcal P=\LL_1\cdot\mathcal P.
\end{equation*}
The PEL comparison diagram in Figure~\ref{relation of curves} identifies this normalized pairing with the pairing of $\frac12\LL_{U_B}$ against the corresponding quaternionic CM cycle, with the change of arithmetic base already incorporated in the normalized degree.

The formula of \cite[Theorem 1.7]{YZ1} is
\begin{equation*}
 \LL_{U_B}\cdot\mathcal P_{U_B}
 =-\frac{L_f'(0,\eta)}{L_f(0,\eta)}
   +\frac12\log\frac{d_\BB}{d_{E/F}}.
\end{equation*}
Here $d_\BB$ is the finite discriminant of the incoherent quaternion algebra.  In our situation $\BB$ is split at every finite place by Remark~\ref{split remark of quaternion}, so $d_\BB=1$.  Combining the line-normalization factor with the normalized base-change degree gives
\begin{equation}\label{Height of CM cycle}
 \LL\cdot\mathcal P
 =-\frac{L_f'(0,\eta)}{L_f(0,\eta)}
   +\frac12\log\frac1{d_{E/F}}.
\end{equation}
This proves Theorem~\ref{Modular height of CM point}.

\subsubsection*{Modular heights of unitary Shimura curves}
The same normalized projection formula identifies the modular height of $C_U$ with
twice Yuan's quaternionic modular height, as computed in
\eqref{comparison of curve height normalizations}.  In the notation of the present paper,
\cite[Theorem~1.1]{Yuan1}, together with the functional-equation form written at the end
of \cite[Section~5]{Yuan1}, states
\begin{equation*}
 h_{\LL_B}^{\mathrm{Yuan}}(C_{U_B})
 =\frac{\zeta_F'(2)}{\zeta_F(2)}
 +\sum_{v\in\Sigma_f}\frac{3N_v-1}{4(N_v-1)}\log N_v
 -\Big(\gamma+\log(2\pi)-\frac12\Big)[F:\QQ]
 +\log|d_F|.
\end{equation*}
Therefore \eqref{comparison of curve height normalizations} gives
\begin{equation*}
 h_{\LL}(C_U)
 =2\frac{\zeta_F'(2)}{\zeta_F(2)}
 +\sum_{v\in\Sigma_f}\frac{3N_v-1}{2(N_v-1)}\log N_v
 -\big(2\gamma+2\log(2\pi)-1\big)[F:\QQ]
 +2\log|d_F|.
\end{equation*}
The set $\Sigma_f$ is the set of finite places at which the associated quaternion algebra
is nonsplit.  Propositions~\ref{relation of lattice and order} and~\ref{relation of lattice and order nonsplit case}
identify it with the set of places where the Hermitian lattice defining the unitary curve is
almost self-dual.  This proves Theorem~\ref{Modular height of curve}.

\section{Height series}\label{Height series}
In this section we study the \textit{height series}. The height series comes from the arithmetic intersection of an extension (to integral model) of the generating series and a degree zero CM cycle. In order to obtain our final result, we also need to do some explicit computation about the arithmetic intersection.

The arrangement of this section is as follows: we first review some relevant theories in arithmetic geometry, focusing on the general notion of admissible extensions. Next, we define generating series of divisors and the arithmetic generating series. Finally, we define the height series using arithmetic intersections.

\subsection{Arithmetic Chow cycles and admissible extension}\label{Arakelov}

In this subsection, we review the admissibility notion in \cite{YZZ2,YZ1,Yuan1}. Since the Shimura variety considered here has dimension $n$ over a number field, we also refer to \cite[Appendix~A]{Qiu}, \cite{GS} and \cite{Zh2} for the general arithmetic intersection theory. We first recall the usual description on a regular arithmetic model, and then explain the interpretation used in this paper on the RSZ model and its relative Rapoport--Zink uniformizations.

\subsubsection*{Terminology for arithmetic intersection theory}

Let $X$ be a smooth projective variety of dimension $n$ over a number field $E$. All the definitions below are taken componentwise, so we do not need to assume that $X$ is geometrically connected. Suppose first that $\mathcal X$ is a regular projective and flat model of $X$ over $\mathcal O_E$.

Recall that an arithmetic cycle of codimension $p$ is represented by a pair
\[
 (Z,g_Z),
\]
where $Z$ is a codimension-$p$ cycle on $\mathcal X$ and $g_Z$ is a Green current satisfying
\[
 dd^c g_Z+\delta_Z=\omega_Z
\]
for a smooth real $(p,p)$-form $\omega_Z$. If $f$ is a rational function on an integral subscheme $W$ of codimension $p-1$, with inclusion $i_W:W\hookrightarrow\mathcal X$, the corresponding principal arithmetic cycle is
\[
 \widehat{\operatorname{div}}_W(f)
 =\bigl(\operatorname{div}_W(f),-i_{W,*}\log|f|^2\bigr).
\]
Dividing by the arithmetic rational equivalence generated by these cycles gives the arithmetic Chow group $\widehat{\mathrm{CH}}^p(\mathcal X)$. In particular, an arithmetic divisor is a pair
\[
 \widehat D=(D,g_D),
\]
and a Hermitian line bundle $\overline L=(L,\|\cdot\|)$ defines the arithmetic first Chern class
\[
 \widehat c_1(\overline L)
 =\bigl(\operatorname{div}(s),-\log\|s\|^2\bigr),
\]
which is independent of the nonzero rational section $s$.

The Gillet--Soul\'e intersection product gives
\[
 \widehat{\mathrm{CH}}^p(\mathcal X)
 \otimes\widehat{\mathrm{CH}}^q(\mathcal X)
 \longrightarrow
 \widehat{\mathrm{CH}}^{p+q}(\mathcal X)_{\QQ}.
\]
When $p+q=n+1$, composing with the arithmetic degree
\[
 \widehat{\deg}\left(
   \sum_{\nu\nmid\infty}a_\nu[\nu]
   +\sum_{\sigma:E\hookrightarrow\CC}a_\sigma[\sigma]
 \right)
 =\sum_{\nu\nmid\infty}a_\nu\log N\nu
  +\sum_{\sigma:E\hookrightarrow\CC}a_\sigma
\]
defines the arithmetic intersection number.

In this paper, we mainly consider the intersection of an arithmetic divisor with the closure of a zero-cycle on $X$. Let $D$ and $P$ be, respectively, a divisor and a zero-cycle on $X$, and suppose for the moment that their closures meet properly on $\mathcal X$. With the archimedean convention used throughout this paper, one has
\begin{equation}\label{classical divisor cycle pairing}
 \widehat D\cdot P
 =\sum_{\nu\nmid\infty}(D,P)_\nu\log N\nu
  +\sum_{\sigma:E\hookrightarrow\CC}g_{D,\sigma}(P_\sigma).
\end{equation}
Here $(D,P)_\nu$ denotes the usual local intersection multiplicity, while the last term means evaluation of the Green function on the complex zero-cycle. If the intersection is not proper, one may use a moving lemma on a regular model. In our actual RSZ setting below, the finite contribution is instead defined directly by a derived tensor product, which also treats excess and vertical intersections.

It is also useful to recall the horizontal--vertical decomposition. Write the finite part of $\widehat D$ as
\[
 D+V,
\]
where $D$ denotes the horizontal closure of the generic divisor and $V$ is supported on finitely many special fibers. If $D+V$ intersects $P$ properly, then the regular-model notation of \cite{YZZ2,YZ1,Yuan1} is
\begin{equation}\label{classical i j decomposition}
 \widehat D\cdot P=i(D)+j(D),
\end{equation}
where
\[
 i(D)=D\cdot P+
      \sum_{\sigma:E\hookrightarrow\CC}g_{D,\sigma}(P_\sigma),
 \qquad
 j(D)=V\cdot P.
\]
When $D$ and $P$ are defined over $E$, this can be decomposed place by place as
\[
 i(D)=\sum_\nu i_\nu(D),
 \qquad
 j(D)=\sum_\nu j_\nu(D),
\]
where, for a finite place $\nu$, $i_\nu(D)$ and $j_\nu(D)$ are the horizontal and vertical local intersection terms multiplied by $\log N\nu$, and for an archimedean place $\nu$ we set
\[
 i_\nu(D)=g_{D,\nu}(P_\nu(\CC)),
 \qquad
 j_\nu(D)=0.
\]
As usual, one may regard $\log N\nu$ as $1$ at an archimedean place.

We now explain the finite intersection used in this paper. After base change to the common RSZ reflex field $\wt E$, let $\wt{\mathcal D}_{\wt\nu}$ and $\wt{\mathcal P}_{\wt\nu}$ be the moduli-theoretic realizations of $D$ and $P$ on the completed local RSZ model $\widehat{\mathcal M}^{\mathrm{loc}}_{\wt\nu}$. Assume that their derived intersection has proper support over $\mathcal O_{\wt E,\wt\nu}$. We define
\begin{equation}\label{derived finite intersection}
 \operatorname{Int}_{\wt\nu}(D,P)
 :=\chi\!\left(
   \widehat{\mathcal M}^{\mathrm{loc}}_{\wt\nu},
   \mathcal O_{\wt{\mathcal D}_{\wt\nu}}
   \overset{\mathbf L}{\otimes}_{\mathcal O_{\widehat{\mathcal M}^{\mathrm{loc}}_{\wt\nu}}}
   \mathcal O_{\wt{\mathcal P}_{\wt\nu}}
 \right),
\end{equation}
where, for a bounded coherent complex $\mathcal F$ with finite-length cohomology,
\[
 \chi(\widehat{\mathcal M}^{\mathrm{loc}}_{\wt\nu},\mathcal F)
 :=\sum_{i,j}(-1)^{i+j}
   \operatorname{length}_{\mathcal O_{\wt E,\wt\nu}}
   H^j\!\left(
      \widehat{\mathcal M}^{\mathrm{loc}}_{\wt\nu},
      \mathcal H^i(\mathcal F)
   \right).
\]
This is the Euler--Poincar\'e characteristic used in \cite[(9.7)]{MZ}. If the ambient model is regular and the two cycles are Tor-independent, it agrees with the usual local intersection multiplicity. In general, it includes the horizontal, vertical and excess-intersection contributions at the same time.

At an archimedean embedding $\wt\sigma:\wt E\hookrightarrow\CC$, let $g_{D,\wt\sigma}$ be a Green function for $D_{\wt\sigma}$. The normalized arithmetic pairing is
\begin{equation}\label{DP term}
 \widehat D\cdot\mathcal P
 =\frac{1}{\delta_Z[\wt E:E]}
 \left(
  \sum_{\wt\nu\nmid\infty}
    \operatorname{Int}_{\wt\nu}(D,P)\log N\wt\nu
  +\sum_{\wt\sigma:\wt E\hookrightarrow\CC}
    g_{D,\wt\sigma}(P_{\wt\sigma})
 \right).
\end{equation}
This is the specialization of \eqref{normalized RSZ intersection}. The projection formula shows that it is independent of a finite enlargement of the common reflex field. In the later computations we regroup these terms over places of $E$ or $F$.

The comparison with \eqref{classical i j decomposition} is as follows. If a regular model is available and the intersections are Tor-independent, the finite term in \eqref{DP term} is the sum of the classical horizontal and vertical contributions $i_\nu(D)+j_\nu(D)$. In the notation used below, $i_\nu(D)$ denotes the full normalized derived contribution. Thus the classical $j$-part is not discarded; it is already contained in the derived local intersection.

\subsubsection*{Admissible extension}

Fix the metrized Hodge line bundle $\LL=(L,\|\cdot\|)$. On a regular arithmetic model, let
\[
 \widehat D=(D+V,g_D)
\]
be an arithmetic $\QQ$-divisor whose generic fiber is the divisor $D$ on $X$. Put
\[
 \xi=\frac{L^n}{\deg_L(X)},
 \qquad
 \widehat\xi=\frac{\LL^n}{\deg_L(X)}.
\]
Following \cite{YZZ2,YZ1,Yuan1}, the divisor $\widehat D$ is called $\LL$-admissible if the following conditions hold:
\begin{enumerate}[(1)]
\item the arithmetic one-cycle
\[
 \widehat D\cdot\LL^{n-1}
 -(D\cdot L^{n-1})\widehat\xi
\]
is flat, namely, its intersection with every vertical arithmetic divisor is zero;
\item for every finite place, the vertical part satisfies
\[
 (V\cdot L^n)_\nu=0;
\]
\item for every connected archimedean component,
\[
 \int_{X_\sigma(\CC)}g_{D,\sigma}\,c_1(L)^n=0.
\]
\end{enumerate}
If the last condition is omitted, we call $\widehat D$ weakly $\LL$-admissible. This agrees with the terminology in \cite{YZZ2,YZ1,Yuan1}. In \cite{Qiu}, weakly admissible is called admissible and the last normalization is called normalized admissibility.

An $\LL$-admissible extension of a divisor $D$ on $X$ is an admissible arithmetic class whose generic fiber is $D$. After allowing $\QQ$-coefficients, such a class exists in the situations considered here. The relevant uniqueness is uniqueness in the arithmetic Picard group modulo classes pulled back from the arithmetic base. This is sufficient for us, since a base class has zero pairing with a degree-zero arithmetic cycle. In particular, the finite representative need not be described as a uniquely determined vertical divisor on an arbitrarily chosen integral model.

For the finite calculations of this paper, we choose the Kudla--Rapoport moduli divisor on the RSZ model as the preferred representative of the admissible class. The archimedean component is the admissible Green function. Thus the admissible class and the RSZ representative play different roles: the first gives the invariant arithmetic class, while the second gives the finite local intersection in \eqref{derived finite intersection}.

The pairing with the normalized Hodge-cycle class is
\begin{equation}\label{DL term}
 \widehat D^{\mathrm{adm}}\cdot\widehat\xi
 :=\frac{\widehat D^{\mathrm{adm}}\cdot\LL^n}{\deg_L(X)}.
\end{equation}
It is unchanged if the admissible representative is modified by a base class after passing to the degree-zero combination used below.

\subsubsection*{Our setting}

Let $D$ be a divisor on $X$ and let $\mathcal P$ be the normalized CM cycle. The arithmetic quantity relevant to the height series is
\begin{equation}\label{degree zero divisor cycle pairing}
 \widehat D^{\mathrm{adm}}\cdot(\mathcal P-\widehat\xi).
\end{equation}
The first term is computed place by place by \eqref{DP term}, and the second one is given by \eqref{DL term}. If one works on a regular model, \eqref{degree zero divisor cycle pairing} may be written using the classical decomposition $i(D)+j(D)$ above. In the RSZ formulation, the same finite information is contained in the derived local terms, so we do not introduce a separate $j(D)$ in the explicit calculations.

The cycles occurring below may be defined only after a finite abelian extension of $E$. We then compute after passing to a common RSZ reflex field and divide by the corresponding reflex-field degree as in \eqref{DP term}. The projection formula allows the result to be regrouped over the places of $E$, and then over the places of $F$. Therefore, in the explicit formulas we suppress the auxiliary base field from the notation.

\begin{remark}\label{Green function remark}
The explicit Green functions used below are initially weakly admissible. For a connected archimedean component, put
\[
 c_\sigma(D)
 =\frac{1}{\deg_L(X)}
   \int_{X_\sigma(\CC)}g_{D,\sigma}\,c_1(L)^n.
\]
Then the normalized admissible Green function is obtained by replacing $g_{D,\sigma}$ with $g_{D,\sigma}-c_\sigma(D)$. Since both $\mathcal P$ and $\widehat\xi$ have degree one on the corresponding component, this replacement changes the two terms
\[
 \widehat D\cdot\mathcal P,
 \qquad
 \widehat D\cdot\widehat\xi
\]
by the same constant. Hence their difference is unchanged. Nevertheless, when the two terms are calculated separately, the correction $c_\sigma(D)$ must be included. Sections~\ref{Local intersections at archimedean places} and~\ref{Proof of the main theorem} compute this correction explicitly. This archimedean normalization is independent of the finite derived-intersection convention.
\end{remark}

\subsection{Generating series of special divisors}\label{Generating series}
In this subsection, based on the discussion in \cite{Liu1}, we introduce the generating series of special divisors associated with unitary Shimura varieties.

Moreover, we also introduce the special divisors on the RSZ Shimura variety. Note that these two classes of special divisors are compatible with their counterparts on unitary Shimura variety under \ref{unitary and RSZ}. The benefit to introduce these corresponding notations is that it allows for a better understanding of the significance of special divisors from the perspective of moduli interpretation. And this advantage will be reflected in the final paper of this series.

The special curve $C_U$ introduced in Section~\ref{Unitary Shimura curve} is useful for comparison, but restriction to it is not termwise: after decomposing $x=x_1+x_\perp$ with respect to $V_f=V_{1,f}\oplus V_{1,f}^\perp$, the pullback is controlled by $x_1$, and the generating series restricts as a theta convolution.  The later dimension-independence of the non-degenerate local multiplicity comes instead from the common relative height-two deformation factor in the Rapoport--Zink description.  This distinction is made precise below.

\subsubsection*{Special divisors}

We recall the Kudla divisors on the generic unitary Shimura variety.  Let $y\in V$ satisfy $q(y)>0$, put $W_y=Ey$, and let $g\in G(\Af)$.  The corresponding divisor is
\begin{equation*}
 Z(y,g)_U
 :=G_{W_y}(\QQ)\backslash
   D_{W_y}\times G_{W_y}(\Af)gU/U
 \longrightarrow X_U,
\end{equation*}
where
\begin{equation*}
 D_{W_y}=\{z\in D\mid \langle z,y\rangle=0\}
\end{equation*}
and $G_{W_y}$ is the pointwise stabilizer of $W_y$.  This notation includes the usual sum over the relevant finite double cosets when the displayed map is not generically injective.

The divisor depends only on the $U$-orbit of the finite adelic vector
\begin{equation*}
 x=g^{-1}y\in V_f.
\end{equation*}
Accordingly, we call $x\in V_f$ \emph{admissible} if it is of this form for some $y\in V$ with $q(y)>0$ and some $g\in G(\Af)$; for such $x$ we write $Z(x)_U=Z(y,g)_U$, and for a non-admissible $x$ we put $Z(x)_U=0$.  This corrects the misleading notation $x\in U\backslash V(E)$: the summation variable in the generating series is finite adelic, while rationality is encoded by admissibility.  For $u\in U$ and $a\in E^\times$,
\begin{equation}\label{invariance property}
 Z(x)_U=Z(uxa)_U.
\end{equation}
We also write $D_x=D_{W_y}$ on the corresponding complex component.

The orthogonal complement $W_y^\perp$ has dimension $n$ and defines the lower-dimensional unitary Shimura datum.  Hence, as a cycle with its natural finite map,
\begin{equation}\label{Special divisor as Shimura variety}
 Z(y,g)_U\cong X_{W_y^\perp,U_{W_y^\perp,g}},
\end{equation}
where the level is obtained by intersecting $gUg^{-1}$ with $\U(W_y^\perp)(\Af)$.  This is a statement on the generic fiber; its integral realization is the corresponding Kudla--Rapoport divisor on the RSZ model.

\begin{remark}\label{split special divisor remark}
If the ideal generated by $q(x)$ is supported at places split in $E/F$, the orthogonal lattice has the expected hyperspecial or principal-congruence type away from that ideal.  This simplifies the relative local model of the lower-dimensional cycle.  It does not, by itself, produce a smooth global integral model of the non-PEL special divisor; finite intersections are still taken on the RSZ/relative model.
\end{remark}

When $n=1$, a degree-one CM cycle is a special divisor on the unitary Shimura curve.  For $\Phi\in\mathcal S(V_f)^U$ and $t\in F_+$, define the weighted divisor
\begin{equation*}
 Z_t(\Phi)_U
 :=\sum_{\substack{x\in U\backslash V_f\\q(x)=t}}
   \Phi(x)Z(x)_U,
\end{equation*}
where non-admissible terms are zero.  The invariance above gives, for $a\in E^\times$,
\begin{equation}\label{invariance property of weighted divisor}
 Z_t(r(m(a))\Phi)_U=Z_{\Nm(a)t}(\Phi)_U.
\end{equation}

\subsubsection*{Special divisors on RSZ Shimura varieties}

The RSZ formulation gives the moduli-theoretic realization of the preceding divisors.  Let
\begin{equation*}
 (A_0,\psi_0,\lambda_0,\bar\eta_0;
   A,\psi,\lambda,\bar\eta)
\end{equation*}
be a point of the generic RSZ moduli stack over a connected locally noetherian scheme over $\wt E$.  The $E$-vector space $\Hom_E^0(A_0,A)$ carries the Hermitian form
\begin{equation*}
 h'(x,y)=\lambda_0^{-1}\circ y^\vee\circ\lambda\circ x
 \in\End_E^0(A_0)\cong E.
\end{equation*}
For $t\in F_+$, the generic Kudla--Rapoport divisor $\wt Z_t$ parametrizes such tuples together with $x\in\Hom_E^0(A_0,A)$ satisfying $h'(x,x)=t$ and the prescribed finite-adelic integrality condition.  Under a level trivialization this condition is expressed by
\begin{equation}\label{identification of V}
 \Hom_E^0(A_0,A)
 \hookrightarrow
 \Hom_{\BA_{E,f}}(\widehat V(A_0),\widehat V(A))
 \xrightarrow{\ \eta\ }
 \wt V\otimes_E\BA_{E,f},
\end{equation}
and requires the image of $x$ to lie in the $K_{\wt G}$-orbit of $\wt\Lambda_f$.  The divisor $\wt Z(x_0)$ for a fixed admissible $x_0\in\wt V_f$ is defined similarly.  These generic divisors are finite and unramified over the RSZ Shimura variety and have codimension one.

The group-theoretic identification with a lower-dimensional RSZ Shimura variety is best described in the isogeny category.  Put
\begin{equation*}
 x^\dagger=\lambda_0^{-1}\circ x^\vee\circ\lambda,
 \qquad
 e_x=x\,h'(x,x)^{-1}x^\dagger\in\End_E^0(A).
\end{equation*}
Then $e_x$ is an idempotent, and the factor
\begin{equation*}
 B_x:=(1-e_x)A
\end{equation*}
is an abelian scheme up to isogeny with the induced $E$-action, polarization, and level structure.  It realizes the orthogonal complement $V_x^\perp$ and gives
\begin{equation*}
 \wt Z(x)\cong
 S(\wt G_x,\{h_{\wt G_x}\})_{K_{\wt G_x}}.
\end{equation*}
One should not use the literal cokernel of an arbitrary quasi-homomorphism $A_0\dashrightarrow A$ as an abelian scheme.

On an integral RSZ model, the special divisor is instead defined by requiring an \emph{actual} $\mathcal O_E$-linear homomorphism $x:A_0\to A$ with the prescribed norm.  After passage to a relative Rapoport--Zink space, this becomes the locus where the corresponding special quasi-homomorphism of framing objects lifts to a homomorphism.  The latter is a relative Cartier divisor (or empty) in the situations used below.  Under the generic product decomposition \eqref{unitary and RSZ}, $\wt Z_t$ is the pullback of $Z_t$; no extension of this assertion to a morphism of a non-PEL integral model is needed.

\subsubsection*{Generating series}
Now, let's return to the main thread of this subsection. Define the Kudla's generating function of divisors in the Chow group $\Ch^1(X_U,\CC)$ with complex coefficients as follows:
\begin{equation}\label{Generating function}
    Z_\Phi(\tau):=[L^\vee]\Phi(0)+\sum_{t\in F_+}Z_t q^t.
\end{equation}
Here $q=\prod_{k=1}^d e^{2\pi i\tau_k}$ with $\tau=(\tau_k)_{k=1}^d\in\mathcal{H}^d$.
We remind the reader that do not confuse this $\tau$ with our previous $\tau\in E^1\backslash E^1(\BA)$. In many literature, this generating series is also written as
\begin{equation*}
    Z_\Phi(\tau):=[L^\vee]+\sum_{t\in F_+}Z_t q^t
\end{equation*}
for simplicity by omitting the $\Phi(0)$-term, since the Schwartz function $\Phi$ is always chosen to be standard. In the later discussion, we will also follow this convention. Moreover, we also denote by
\begin{equation*}
    Z_{\Phi,*}(\tau):=\sum_{t\in F_+}Z_t q^t
\end{equation*}
the non-constant part of the generating series.

It is worth noting that, in comparison to the generating series of the quaternionic Shimura curve in \cite[Proposition 4.2]{YZZ2}, the constant term of this generating series does not have a coefficient of $\frac{1}{2}$. This also confirms the fact we mentioned in Remark \ref{Kodaira-Spencer remark}, namely that the line bundle $L$ on generic fiber in this paper corresponds to half of the $L_U$ in \cite{YZZ2,YZ1,Yuan1}.

From \cite{Liu1} and \cite{YZZ1}, $Z_\Phi(\tau)$ does not depend on the choice of $U$ when we consider the sum in the direct limit $\Ch(X)_\CC$ of $\Ch(\XU)_\CC$ via pull-back maps of divisors, since both constant term and non-constant term have this property, i.e., for any $U'\subset U$ with $\pi_{U,U'}$ the natural projection of Shimura varieties, we always have
\begin{equation*}
    \pi_{U,U'}^*Z_t(\Phi)_{U'}=Z_t(\Phi)_U,\ \pi_{U,U'}^*L_{U'}=L_U.
\end{equation*}

Furthermore, we have the following theorem which promises the modularity.
\begin{theorem}\cite[Theorem 3.5]{Liu1}\label{Modularity}
    For any $\Phi\in \mathcal{S}(V_{f})$, the generating function $Z_\Phi(\tau)$ of Kudla divisor classes is convergent and defines a modular form of weight $n+1$ for $\UU(F)$.
\end{theorem}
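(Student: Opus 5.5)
Theorem \ref{Modularity} is cited from \cite[Theorem 3.5]{Liu1}, so the plan is to check that our data fit Liu's hypotheses rather than to reprove modularity. Liu treats unitary Shimura varieties attached to a Hermitian space of signature $(n,1)$ at one archimedean place and definite elsewhere, with arbitrary neat level, which is our $X_U$. Our weighted divisors $Z_t(\Phi)_U$ are Liu's Kudla divisors with the admissibility convention above. The constant term $[L^\vee]\Phi(0)$ is Liu's, up to the identification of the tautological bundle $L_U$ with the dual of the Hodge line on $D$. The steps below carry this out.

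First, I reduce to neat level. Pick $U'\subset U$ neat. The compatibilities $\pi_{U,U'}^*Z_t(\Phi)_{U'}=Z_t(\Phi)_U$ and $\pi_{U,U'}^*L_{U'}=L_U$ recorded above, together with injectivity of $\pi^*$ on $\Ch^1(\cdot)_\CC$ (since $\pi_*\pi^*$ is multiplication by the degree), show that modularity at level $U'$ implies it at level $U$. I also intend to replace $\Phi$ by a sum of characteristic functions of translates of lattices, using linearity in $\Phi$.

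Second, I run Liu's argument. Its mechanism, following Kudla--Millson and Borcherds/YZZ, is to show that for every linear functional $\ell$ on $\Ch^1(X_U)_\CC$ the series $\ell(Z_\Phi(\tau))$ is a holomorphic modular form of weight $n+1$ for $\UU(F)$; this is "modularity with respect to all linear functionals". I will split $\Ch^1$ into three parts:
\begin{itemize}
\item[(a)] The image in cohomology. Here modularity comes from the Kudla--Millson theta lift.
\item[(b)] The degree part. This is controlled by $\deg_L$ and the geometric Siegel--Weil formula \ref{Geometric Siegel Weil}, which gives an Eisenstein series of weight $n+1$.
\item[(c)] The homologically trivial part, i.e. the Albanese/Picard-variety part. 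Here one needs an induction on dimension by pulling back to special divisors, which are again unitary Shimura varieties via \eqref{Special divisor as Shimura variety}, or else the Bruinier/Borcherds-product argument.
\end{itemize}
Convergence follows once every $\ell(Z_\Phi)$ is modular and $\Ch^1$ modulo the Picard variety is finite-dimensional, as in \cite[Theorem 3.5]{Liu1}.

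I expect part (c) to be the main obstacle, together with convergence in the Chow group rather than in cohomology. In Liu this is handled by knowing that $\mathrm{Pic}^0$ of these Shimura varieties is trivial for $n\ge2$ (vanishing of $H^1$, by Kazhdan/Borel), which reduces everything to cohomology. For $n=1$ one would instead use the Jacobian and Hecke-equivariance, as in \cite{YZZ1}. So the concrete steps are: check that weight and normalization match (weight $n+1$, since $\dim V=n+1$), then invoke \cite[Theorem 3.5]{Liu1}.
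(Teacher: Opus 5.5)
Quoting \cite[Theorem 3.5]{Liu1} after matching $X_U$, $Z_t(\Phi)_U$, the weight $n+1$ and the constant term is exactly what the paper does; it gives no argument of its own. Your level change is also fine; what carries modularity down is $\frac{1}{\deg\pi}\pi_*$.

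The genuine problem is the mechanism you attribute to Liu and propose to run in (c). It is \emph{not} true that $\Pic^0(X_U)=0$ for $n\geq 2$. Each component of $X_U(\CC)$ is a ball quotient $\Gamma\backslash D$, with $\Gamma$ arithmetic in $\U(n,1)$ (times a compact group). $\U(n,1)$ has real rank one and lacks property (T), so no vanishing theorem for $H^1$ applies. In fact, for lattices of exactly this type (coming from Hermitian forms over a CM field, i.e.\ $G=\Res_{F/\QQ}\U(V)$), Kazhdan proved that suitable congruence subgroups have positive first Betti number. So $X_U$ can have a non-trivial Picard variety. The statement is for every $\Phi$, hence arbitrarily small level, and you pass to a neat $U'$ yourself. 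Therefore reducing $\Ch^1(X_U)_\CC$ to $\NS\otimes\CC\subset H^2$ and quoting Kudla--Millson fails in every dimension, not only for $n=1$.

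Your fallbacks do not repair this. For $F\neq\QQ$ the variety is compact and Borcherds products are unavailable. Restricting to special divisors of $X_U$ goes the wrong way: the Yuan--Zhang--Zhang induction pulls back \emph{from} a larger variety whose $H^1$ vanishes. Convergence also does not come from finite-dimensionality of $\Ch^1$ modulo $\Pic^0$, which says nothing about the $\Pic^0$-components of the $Z_t$. It comes from modularity against all linear functionals at a fixed level: this forces the dual of $\mathrm{span}\{Z_t\}$ to inject into a finite-dimensional space of modular forms. Finally, item (b) is already contained in (a), since $\deg_L$ factors through cohomology.

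A route that works for $n\geq2$ avoids $\Pic^0(X_U)$ altogether. Regard $V$ as an $F$-quadratic space with $q(x)=\langle x,x\rangle$, of signature $(2n,2)$ at $\iota$ and definite elsewhere. The inclusion $\U(V)\subset\SO(V)$ gives a finite morphism from $X_U$ (at a smaller level, which your pushforward step handles) to an orthogonal Shimura variety. Its noncompact factor $\SO(2n,2)$ has real rank two and property (T), so $H^1=0$ there, and Kudla--Millson then gives modularity in its Chow group. Since $\mathrm{Re}\langle\lambda z,x\rangle=0$ for all $\lambda\in\CC$ forces $\langle z,x\rangle=0$, the orthogonal special divisor of $x$ pulls back to $Z(x)_U$ with multiplicity one. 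The orthogonal tautological bundle pulls back to $L$, and there is no theta convolution because the ambient space is the same $V$. Thus $Z_\Phi$ is the pullback of a series that is modular in a finite-dimensional N\'eron--Severi space. This gives convergence and modularity on $\mathrm{SU}(1,1)\cong\SL_2$, where the restricted Weil representations agree. The invariance \eqref{invariance property of weighted divisor} under $m(a)$, $a\in E^\times$, together with Hilbert 90 ($\UU(F)=M(F)\,\mathrm{SU}(1,1)(F)$ with $M=\{m(a)\}$), upgrades this to $\UU(F)$. For $n=1$ the ambient orthogonal variety is essentially a product of Shimura curves, so the Jacobian part still needs a separate curve argument in the spirit of \cite{YZZ1}.
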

In the later discussion, fix $\tau_0=(i)_{k=1}^d\in\mathcal{H}^d$, we then have $q^t=W^\mm_t(g_\infty)$ if we take $\tau=g_\infty\tau_0$ in our definition of $q$. Thus, we can also write our generating series as $Z_\Phi(g)$ for $g\in\UU(F)$, or sometimes $Z(g,\Phi)$ to emphasize our choice of $\Phi$. Especially, if $\Phi\in\mathcal{S}(V(\BA_E))^U$ which is standard at each archimedean place, we simply have
\begin{equation*}
    Z(g,\Phi)=[L^\vee]r(g)\Phi(0)+\sum_{t\in F^+}\sum_{y\in U\backslash V_f,\langle y,y\rangle=t}r(g)\Phi(y)Z(y)_U.
\end{equation*}
This notation is similar to the notation in \cite{YZZ2,YZ1,Yuan1}, and looks compatible with our derivative series. With the same convention as above, we will also write $[L^\vee]$ to replace $[L^\vee]r(g)\Phi(0)$ for simplicity in the later discussion.

For convenience, we also introduce the notation
\begin{equation*}
    \deg_L(Z(g,\Phi)):=Z(g,\Phi)\cdot c_1(L)^{n-1},
\end{equation*}
which is called the \textit{degree of generating function}. We can similarly define $\deg_L(Z_*(g,\Phi))$ the degree of non-constant part. A key result is the so-called "geometric Siegel--Weil formula" in \cite[Remark 4.2.5]{Qiu}, which is stated in \cite[Corollary 10.5]{Kud1} for the orthogonal case, we have
\begin{equation}\label{Geometric Siegel Weil}
    \deg_L(Z(g,\Phi))=-\deg_L(X_U) E(0,g,\Phi),
\end{equation}
where $E_*(0,g,\Phi)$ is the non-constant part of the incoherent Eisenstein series associated with $\Phi$. See also \cite[Proposition 4.2]{YZZ2} for this result for quaternionic Shimura curve. Note that our coefficient of the geometric Siegel--Weil formula is different from \cite{YZZ2}. The main reason for this difference is the different choices of the Hodge bundle, as discussed in Remark \ref{Kodaira-Spencer remark}.

\subsubsection*{Restriction to the unitary Shimura curve}

Let $i:C_U\hookrightarrow X_U$ be the morphism induced by $V_1\hookrightarrow V$, and write orthogonally
\begin{equation*}
 V_f=V_{1,f}\oplus V_{1,f}^\perp,
 \qquad x=x_1+x_\perp.
\end{equation*}
For $z\in D(V_1)$ one has $\langle z,x\rangle=\langle z,x_1\rangle$.  Therefore the pullback of a special divisor is controlled by the projection $x_1$, not by moving $x$ into $V_1$ by an element of $U$.

\begin{lemma}\label{restriction to unitary Shimura curve}
After resolving the finite double-coset multiplicities, the pullback $i^*Z(x)_U$ has the following form:
\begin{enumerate}[(1)]
\item if $x_1\ne0$ and $q(x_1)>0$, it is the special divisor on $C_U$ attached to $x_1$;
\item if $x_1=0$, the whole curve occurs, which is the improper-intersection contribution;
\item if $q(x_1)\le0$, the pullback is empty.
\end{enumerate}
Consequently, the pullback of a weighted generating series is a theta convolution of the generating series for $V_1$ with the theta series of $V_1^\perp$; it is not, in general, termwise equal to the generating series on $C_U$.
\end{lemma}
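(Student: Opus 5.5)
The argument works directly on complex uniformizations, component by component, and then assembles the weighted series. Since $Z(x)_U$ for admissible $x=g^{-1}y$ is the image of $G_{W_y}(\QQ)\backslash D_{W_y}\times G_{W_y}(\Af)gU/U$, the first step is to compute the fibre product $C_U\times_{X_U}Z(x)_U$ by a double-coset decomposition. A point of the fibre product is a pair $[(z,h)]\in C_U$, $[(z',h')]\in Z(x)_U$ with the same image in $X_U$. After translating by $G(\QQ)$ and $U$, this becomes the set of pairs $(z,h)$ with $z\in D(V_1)$, $h\in \U(V_1)(\Af)$, together with a rational $y'$ in the $G(\QQ)$-orbit data and $\gamma$ with $z\perp y'$. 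Equivalently, writing $y'=h x'$ with $x'$ ranging over the $U$-orbit of $x$ (up to the multiplicities in the displayed double cosets), we need $\langle z,y'\rangle=0$. This is the step that \emph{resolves the finite double-coset multiplicities}: one sums over $U_{\BV_1}\backslash (Ux)$, and each coset gives one local contribution.

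The second step is the pointwise analysis. For $z\in D(V_1)\subset \BP(V_{1,\iota,\CC})$, orthogonal decomposition gives $\langle z,y'\rangle=\langle z,y'_1\rangle$, where $y'=y'_1+y'_\perp$ with $y'_1\in V_1$. There are three cases.
\begin{enumerate}[(1)]
\item If $y'_1=0$, the condition holds for all $z$, so the entire component of $C_U$ lies in the pullback. This is case (2), the improper intersection.
\item If $y'_1\neq0$, the locus $\{z\in D(V_1):\langle z,y'_1\rangle=0\}$ is $D_{Ey'_1}\cap D(V_1)$. By signature, this set is nonempty exactly when the orthogonal complement of $y'_1$ in $V_{1,\iota}$ still contains a negative line. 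Since $V_1$ has signature $(1,1)$ at $\iota$ and is definite elsewhere, this happens iff $q(y'_1)>0$. In that case the locus is a single point, namely the special divisor on $C_U$ attached to $y'_1$.
\item If $q(y'_1)\le0$ and $y'_1\ne0$, the locus is empty. This is case (3).
\end{enumerate}
Transporting back by $h$ identifies $y'_1$ with $x_1$ (the projection commutes with $\U(V_1)\times\U(V_1^\perp)$), which gives items (1)–(3).

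Finally, for the weighted series the plan is to take $\Phi=\Phi_1\otimes\Phi_\perp$, which suffices by linearity after the $U$-invariance reduction. Then
\[
 i^*Z_t(\Phi)=\sum_{t_1+t_2=t}\Big(\sum_{q(x_\perp)=t_2}\Phi_\perp(x_\perp)\Big)Z_{t_1}(\Phi_1),
\]
where the terms with $x_1=0$ contribute the restricted constant term via $i^*L=L_1$ (Proposition~\ref{restriction of Hodge bundle on curve}) and the self-intersection class. Forming $q$-expansions gives the product of the $V_1$ generating series with the theta series $\theta_{\Phi_\perp}$ of $V_1^\perp$, which is the asserted theta convolution. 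The factor $\theta_{\Phi_\perp}$ is generally nonconstant, so the pullback is not termwise the generating series on $C_U$.

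The main obstacle is the bookkeeping in the first step. One must check that the double-coset map from $U_{\BV_1}\backslash(Ux)$ to components of the fibre product is compatible with $U=U_{\BV_1}\times U_{V_1^\perp}$ at maximal level. One must also interpret the $x_1=0$ contribution correctly, as the excess term $-c_1(L_1)$ times the multiplicity, rather than literally as the whole curve. To settle this, I would first verify it against Theorem~\ref{Modularity} by comparing constant terms.
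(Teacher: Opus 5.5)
Your pointwise analysis is what the paper relies on: it gives no argument beyond the observation $\langle z,x\rangle=\langle z,x_1\rangle$ for $z\in D(V_1)$. Your signature-$(1,1)$ case split is correct \emph{for a rational vector} $y'=y'_1+y'_\perp$.

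The gap is in transferring this to the adelic vector $x$. The identity $U=U_{\BV_1}\times U_{V_1^\perp}$, which you plan to verify ``at maximal level'', is false. $U$ is the full stabilizer of $\Lambda$, and even when $\Lambda=\Lambda_1\oplus\Lambda_1^\perp$ the local groups $U_v$ contain elements that mix the two summands. For example, at a self-dual place there is an element exchanging a unit vector of $\Lambda_{1,v}$ with one of $\Lambda_{1,v}^\perp$.

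Consequently the step ``transporting back by $h$ identifies $y'_1$ with $x_1$'' fails. The rational vectors contributing at $[z,h]$ are the $y'=hx'$ with $x'$ ranging over the whole orbit $Ux$, restricted to those with $hx'\in V$. Along this orbit $q(x'_1)$ is not constant, so the projections vary even modulo $U_{\BV_1}$. Since $Z(x)_U=Z(ux)_U$ by \eqref{invariance property} while $(ux)_1\neq x_1$ in general, a single $Z(x)_U$ can pull back to a combination of pieces of different types. This is exactly the paper's warning that one must not move $x$ into $V_1$ by an element of $U$. Items (1)--(3) therefore have to be stated and proved per rational representative, which is what ``resolving the double-coset multiplicities'' must mean. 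They cannot be attached to one $x_1$ determined by the orbit.

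The ``consequently'' part survives, but it should be derived without decomposing $U$.
\begin{itemize}
\item Near $[z,h]$ with $z\in D(V_1)$ and $h\in\U(V_1)(\Af)$, the weighted divisor is locally $\sum_{y\in V,\,q(y)=t}\Phi(h^{-1}y)D_y$. Decompose each rational $y=y_1+y_\perp$.
\item Write $\Phi=\sum_j\Phi_{1,j}\otimes\Phi_{\perp,j}$. This needs only invariance under the subgroup $U_{\BV_1}\times U_{V_1^\perp}\subset U$, obtained by averaging.
\item The result is your displayed identity, with $x_\perp$ running over rational vectors of $V_1^\perp$.
\end{itemize}

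For the $y_1=0$ terms, comparing constant terms with Theorem~\ref{Modularity} proves nothing. The constant terms agree automatically, and the improper coefficients at $t\neq0$ are left undetermined. Use instead that $D_y$ is the zero divisor of the section $\langle\cdot,y\rangle$ of $L_D^\vee$. For $y\in V_1^\perp$ this section is $\U(V_1)$-invariant and vanishes identically on $D(V_1)$. Hence, by Proposition~\ref{restriction of Hodge bundle on curve}, the excess pullback is $\Phi_1(0)\bigl(\sum_{q(y_\perp)=t}\Phi_\perp(y_\perp)\bigr)[L_1^\vee]$. This is a finite sum because $V_1^\perp$ is totally positive definite, and it is exactly what matches the constant term of the $V_1$-series.
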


The dimension-independence of the non-degenerate local multiplicity used later comes instead from the relative Rapoport--Zink description of the intersection with the fixed CM cycle: after separating the $W^\perp$-component, the deformation problem that contributes to the length is the same height-two problem as on the unitary Shimura curve.  This is the local cancellation mechanism used in Section~\ref{The i-part at non-archimedean places}.

\subsection{Arithmetic generating series}

We now combine the admissible arithmetic class at infinity with the moduli-theoretic RSZ divisor at finite places.  The resulting object is an adelic arithmetic divisor class normalized by the toric and reflex-field degrees of Section~\ref{Integral model}.

\subsubsection*{The admissible arithmetic class}
Define
\begin{equation}\label{arithmetic generating series}
 \wh{\mathcal Z}_\Phi(\tau)
 :=[\LL^\vee]\Phi(0)
   +\sum_{t\in F_+}\wh{\mathcal Z}_t q^t.
\end{equation}
Here $\wh{\mathcal Z}_t$ denotes the following pair of compatible data:
\begin{enumerate}[(1)]
\item its archimedean component is the chosen (weakly) $\LL$-admissible Green function for $Z_t$;
\item at every finite place, its preferred representative is the Kudla--Rapoport moduli divisor on the local RSZ model, and its pairing with another cycle is the derived intersection \eqref{derived finite intersection}.
\end{enumerate}
The corresponding class is unique modulo arithmetic classes pulled back from the base.  We also put
\begin{equation*}
 \wh{\mathcal Z}_{\Phi,*}(\tau)
 :=\sum_{t\in F_+}\wh{\mathcal Z}_t q^t
\end{equation*}
and use $\wh{\mathcal Z}(g,\Phi)$ for the adelic notation.

The modularity of the geometric generating series and the admissible normalization imply the following statement in exactly the quotient needed for degree-zero pairings.

\begin{lemma}\label{Almost modularity}
Let $\mathcal P_0$ be a normalized arithmetic one-cycle of degree zero, represented on the RSZ models.  Then
\begin{equation*}
 \wh{\mathcal Z}_\Phi(\tau)\cdot\mathcal P_0
 \in\mathcal A^{n+1}(\UU(\BA_F)).
\end{equation*}
In other words, the generating series of normalized arithmetic pairings is a holomorphic automorphic form of parallel weight $n+1$.  A change of the admissible representative by a base class pairs trivially with $\mathcal P_0$.
\end{lemma}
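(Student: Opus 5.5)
The plan follows the strategy of \cite[Theorem 3.6]{YZZ1} and \cite[Section 4]{Qiu}: lift the modularity of the geometric series (Theorem~\ref{Modularity}) to the arithmetic series after pairing with a degree-zero cycle. Since the arithmetic group is only determined up to the indeterminacy that the degree-zero pairing kills, the key is to show that an arithmetic relation among the $\wh{\mathcal Z}_t$ follows from every linear relation among the $Z_t$ in $\Ch^1(X_U)_\CC$.

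\emph{Step 1: reduction to a finite-dimensional linear relation.} By Theorem~\ref{Modularity}, all classes $[L^\vee]$ and $Z_t$ lie in a finite-dimensional subspace of $\Ch^1(X_U)_\CC$. Choose a basis $D_1,\dots,D_r$ of this subspace, so that $Z_\Phi(\tau)=\sum_i f_i(\tau)D_i$ with each $f_i$ a holomorphic modular form of weight $n+1$. (The $f_i$ are obtained by pairing against linear functionals; the coefficient functions are modular because the functionals are.) Consequently, for each $t$ we have an expansion $Z_t=\sum_i a_i(t)D_i$ with $\sum_t a_i(t)q^t=f_i$.

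\emph{Step 2: uniqueness of the admissible extension.} Let $\wh D_i$ denote the $\LL$-admissible extensions, with $[\LL^\vee]$ corresponding to $[L^\vee]$. This correspondence is admissible, since condition (1) holds tautologically and the Hodge metric is normalized. The key point is that the map $D\mapsto\wh D^{\mathrm{adm}}$ is well defined and linear from $\Ch^1(X_U)_\CC$ to $\wh{\Ch}^1$ modulo base classes. To see this, let $D=\div(f)$ be principal. Then $\wh\div(f)$ has zero intersection with vertical divisors, and $\wh\div(f)\cdot\LL^{n-1}$ is numerically trivial, so condition (1) holds. Adjusting by vertical fibre components and archimedean constants, which are themselves base classes, achieves conditions (2) and (3). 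Uniqueness modulo base classes then comes from the Hodge index theorem on the fibres (Faltings--Hriljac type). Granting this, $\wh{\mathcal Z}_t-\sum_i a_i(t)\wh D_i$ is a base class. Because $\mathcal P_0$ has degree zero on each component, this difference pairs to zero.

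\emph{Step 3: conclusion, and the main obstacle.} Step 2 gives
\[
\wh{\mathcal Z}_\Phi(\tau)\cdot\mathcal P_0=\sum_i f_i(\tau)\,(\wh D_i\cdot\mathcal P_0),
\]
which lies in $\mathcal A^{n+1}(\UU(\BA_F))$. The adelic version follows by the standard passage from $\tau$ to $g$ via $W^{\mathfrak m}_t$.

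The main obstacle is Step 2 in the present RSZ setting. There the finite representative of $\wh{\mathcal Z}_t$ is the Kudla--Rapoport divisor, which is not a priori the admissible one, and no regular model of $X_U$ is available. I would therefore do two things. First, check that the KR divisor, equipped with the admissible Green function, differs from the admissible class only by vertical divisors supported at finitely many places. Second, show that any such vertical discrepancy satisfying condition (2) is, in the normalized pairing \eqref{DP term}, a combination of full fibres, and hence a base class. I would verify this on the relative local models via the derived intersection \eqref{derived finite intersection}, using Lemma~\ref{Smoothness at each place} at the good places and the projection formula to pass between RSZ levels.
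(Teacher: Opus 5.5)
Your Steps 1--3 are the same formal argument the paper uses: its proof simply cites \cite[Lemma 4.3.6]{Qiu} as a consequence of Theorem~\ref{Modularity}, with $\wh{\mathcal Z}_t$ understood by definition as the admissible class modulo base classes (the Kudla--Rapoport divisor is only declared to be its preferred representative), so your argument is correct and the obstacle you raise is not part of the paper's proof of this lemma. If you do want to justify that identification, what you need is that the Kudla--Rapoport divisor itself satisfies the flatness condition (1) (condition (2) alone does not force a vertical discrepancy to be a combination of full fibres, and individual fibre components are not base classes), and this is a genuine geometric input at places where the local model is not regular, such as the almost $\varpi_{E_v}$-modular ones.
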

The proof is the same formal consequence of Theorem~\ref{Modularity} as in \cite[Lemma 4.3.6]{Qiu}; the only change is that its finite coefficients are evaluated by the normalized RSZ derived pairing.

\begin{definition}\label{Definition of height series}
The \emph{height series} is
\begin{equation*}
 \left(
   \wh{\mathcal Z}_*(g,\Phi)
   -\frac{\deg_L(Z_*(g,\Phi))}{\deg_L(X)}\LL
 \right)\cdot(\mathcal P-\widehat\xi).
\end{equation*}
Here $\mathcal P$ is the normalized CM cycle and $\widehat\xi=\LL^n/\deg_L(X)$ is the normalized Hodge-cycle class.
\end{definition}
By Lemma~\ref{Almost modularity}, this is cuspidal.  Expanding it gives
\begin{equation*}
 \wh{\mathcal Z}_*\cdot\mathcal P
 -\wh{\mathcal Z}_*\cdot\widehat\xi
 -\frac{\deg_L(Z_*)}{\deg_L(X)}\LL\cdot\mathcal P
 +\frac{\deg_L(Z_*)}{\deg_L(X)}\LL\cdot\widehat\xi.
\end{equation*}
The last two terms are converted by the geometric Siegel--Weil formula into
\begin{equation*}
 E_*(0,g,\Phi)\big(\LL\cdot\mathcal P-\LL\cdot\widehat\xi\big).
\end{equation*}
The present paper computes the first major term $\wh{\mathcal Z}_*\cdot\mathcal P$; the term involving $\widehat\xi$ is the subject of the third paper.

For each $t$, write
\begin{equation*}
 i(Z_t):=\wh{\mathcal Z}_t\cdot\mathcal P
       =\sum_{\nu}i_\nu(Z_t),
\end{equation*}
where at a finite place $i_\nu(Z_t)$ is the normalized derived Euler characteristic and at an archimedean place it is the Green-function evaluation.  Thus every finite vertical or excess contribution is part of $i_\nu(Z_t)$.  There is no additional $j(Z_t)$ and no assertion that a vertical correction vanishes because of smoothness.

\subsubsection*{Integral Kudla--Rapoport divisors on the RSZ model}
Let $t\in\mathcal O_F^+$.  On the integral RSZ moduli stack, define $\wt{\mathcal Z}_t$ to parametrize tuples
\begin{equation*}
 (A_0,\psi_0,\lambda_0,A,\psi,\lambda,\text{level data};x)
\end{equation*}
with the RSZ Eisenstein conditions and with an \emph{actual} $\mathcal O_E$-linear homomorphism
\begin{equation*}
 x:A_0\longrightarrow A,
 \qquad h'(x,x)=t.
\end{equation*}
The forgetful map
\begin{equation}\label{Arithmetic special divisor on RSZ Shimura variety}
 \wt{\mathcal Z}_t\longrightarrow\mathcal M_{K_{\wt G}}(\wt G)
\end{equation}
is a closed moduli morphism whose generic fiber is the Kudla--Rapoport divisor $\wt Z_t$.  On each relative unitary Rapoport--Zink space it is the locus where the associated special quasi-homomorphism of framing objects lifts to a homomorphism; in the cases used below this is a relative Cartier divisor (or empty).  If the ambient absolute model is nonregular, its intersection with the CM cycle is nevertheless defined by \eqref{derived finite intersection}.

Under the generic decomposition \eqref{unitary and RSZ}, $\wt Z_t$ is the pullback of $Z_t$.  We use this generic compatibility and the normalized RSZ realization to define the finite coefficients of \eqref{arithmetic generating series}; we do not invoke an integral morphism to a separate model of $X_U$ or identify $\wt{\mathcal Z}_t$ with a Zariski closure there.

\section{Explicit local intersections}\label{Explicit local intersection}
In this section we do some explicit computations of the arithmetic intersection $\wh{\mathcal{Z}}_{*}(g,\Phi)\cdot\mathcal{P}$. We will compute separately at each place. Then, combined with the computation of the derivative series in \cite[Section 4]{Guo1}, we prove the main Theorem \ref{main theorem of arithmetic Siegel-Weil I} of this paper.

Before beginning the computations, we note that most of the explicit results in this section are carried out under the assumptions that the Schwartz function satisfies the condition in \cite[Sec 4.1]{Guo1} and that $g=1$. This does not loss any generality, as in our specific computations on the analytic side, since the modularity of the generating series implies the result for general $g$.

\subsection{The improper intersections}
Before starting the computations at each local place, we first rule out improper arithmetic intersections. We claim that although there will be instances of improper intersections in the height series, their contributions are only degenerate pseudo-theta series. More precisely, we have the following lemma.
\begin{lemma}\label{Vanish of improper intersection}
    Let $y\in V(E)$ with $Z(y)$ the corresponding special divisor, $\Phi\in\mathcal{S}(V)$ is a Schwartz function under the assumptions in \cite[Section 4.1]{Guo1}. If $\wh{\mathcal{Z}}(y)\cdot\mathcal{P}$ improperly, i.e., on the generic fiber $P\cap Z(y)\ne\emptyset$, then we must have $y\in W^\perp(E)$. Here $P$ is the fixed CM cycle of dimension zero on generic fiber we defined before.

    Moreover, all the improper intersections can be written as a summation
    \begin{equation*}
        \sum_\nu\sum_{y\in W^\perp\setminus\{0\}}r(g,\tau)\Phi^\nu(y)\cdot C_\nu,
    \end{equation*}
    where $\nu$ ranges over all finite places of $E$, and $C_\nu$ are constant depending on $\nu$. Especially, this summation is a sum of degenerate pseudo-theta series.
\end{lemma}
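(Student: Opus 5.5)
We argue in two steps, first on the complex uniformization and then at the finite places.

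\emph{Step 1: improper intersection forces $y\in W^\perp(E)$.} By \eqref{CM cycle on unitary Shimura variety}, $P_\CC=[(e),1]$. By the definition of Kudla divisors, $[(e),1]$ lies on the component of $Z(y)$ attached to $(y,h)$ exactly when there are $\gamma\in G(\QQ)$ and $u\in U$ with $\gamma h u=1$ and $\gamma^{-1}(e)\in D_{W_y}$. Replacing $y$ by $\gamma y$ reduces this to the condition $\langle e,\gamma y\rangle=0$ at the place $\iota$. Since $\gamma y\in V(E)$ and $e$ spans the negative line $W_\iota$, the vector $\gamma y$ is then orthogonal to $W$ at $\iota$. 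Because $V=W\oplus W^\perp$ is a decomposition over $E$, this means $\gamma y\in W^\perp(E)$. The condition $q(y)>0$ forces $\gamma y\neq0$. This proves the first claim, after renaming $y$ as $\gamma y$.

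\emph{Step 2: the structure of the improper contribution.} In the generating series, the improper part is the sum over $x\in U\backslash V_f$ whose admissible representative can be taken in $W^\perp(E)\setminus\{0\}$, weighted by $r(g)\Phi(x)$. Via $x\mapsto y$ this is the sum $\sum_{y\in W^\perp\setminus\{0\}}r(g,\tau)\Phi(y)$ divided by the stabilizer counting. For each such $y$ we would define $\wh{\mathcal Z}(y)\cdot\mathcal P$ as follows.

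At the archimedean places, we follow \cite{YZZ2}: move $Z(y)$ by the admissible normalization, or equivalently replace the self-intersection by a normal-bundle term. This gives $-\LL|_{\mathcal P}$ plus the constant $c_\sigma$ of Remark~\ref{Green function remark}. The key point is that this quantity is independent of $y$, because $\mathcal P\subset Z(y)$ for all such $y$ and the tautological metric on the normal bundle of $D_{W_y}$ at $(e)$ is $y$-independent up to the factor $q(y)$.

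At a finite place $\nu$ of $E$, the derived intersection \eqref{derived finite intersection} of $\wt{\mathcal Z}(y)$ with the CM cycle $\wt{\mathcal P}$ contained in it reduces to $\chi$ of the conormal line restricted to $\wt{\mathcal P}$. On the relative Rapoport--Zink side, the lifting locus of $y\in W^\perp$ contains the whole CM deformation space, with multiplicity governed by the local integrality of $y$. Hence this term depends only on $y_\nu$. We would then factor it as $\Phi_\nu$-weighted data times a constant $C_\nu$, and absorb the $\nu$-component into $\Phi^\nu$, using that the Schwartz function of \cite[Section 4.1]{Guo1} is standard, hence a characteristic function of $\Lambda_\nu$, at all but finitely many places.

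Summing over places then gives the displayed expression $\sum_\nu\sum_{y\in W^\perp\setminus\{0\}}r(g,\tau)\Phi^\nu(y)\,C_\nu$. Each inner sum is a theta series on the proper subspace $W^\perp$, twisted at one place, so it is by definition a degenerate pseudo-theta series in the sense of \cite[Section 2.3]{Guo1}.

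\emph{Main obstacle.} The hardest step is the finite-place claim that the derived local term factors as $\Phi^\nu(y)C_\nu$ with $C_\nu$ independent of $y$. The plan is to use the $U(W^\perp)$-invariance and the decomposition $\Phi=\Phi_W\otimes\Phi_{W^\perp}$ at almost all places to reduce to a single local computation on the height-two deformation problem. At bad places, where such a reduction may fail, we would allow $C_\nu$ to be replaced by a finite combination of local Schwartz functions; the resulting sum is still of degenerate pseudo-theta type.
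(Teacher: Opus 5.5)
Your Step~1 is correct. It is what the paper means by ``checked easily by the definition of special divisors'', and it is the same computation as the standard-coordinate remark that $P\cap Z(x)\neq\emptyset$ exactly when $x\in W^\perp$.

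For the second part the paper gives no argument of its own; it imports the treatment of improper terms from \cite[Section~8.2]{YZ1}. Your reconstruction has a genuine gap at the finite places. For $y\in W^\perp(E)\setminus\{0\}$, the relevant component of the CM cycle already lies on $\wt{\mathcal Z}(y)$ on the generic fiber; locally, $\mathcal N_0\subset\mathcal Z(y)$. Hence $\mathcal O_{\wt{\mathcal Z}(y)}\overset{\mathbf L}{\otimes}\mathcal O_{\wt{\mathcal P}}$ has a horizontal $\mathcal H^0$, which is not of finite length. The Euler characteristic in \eqref{derived finite intersection} is defined only under the proper-support hypothesis, so it does not exist here.

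Nor is ``$\chi$ of the conormal line restricted to $\wt{\mathcal P}$'' a local number. An invertible sheaf on a horizontal one-cycle has no local length. It has only a global arithmetic degree once metrized, and splitting that degree into local terms requires choosing a section. You never fix that choice, and it is exactly what decides where the $y$-dependence lands. This is why your archimedean step wavers between ``independent of $y$'' and ``up to the factor $q(y)$''. It is also why, if that step already yields the global quantity $-\LL\cdot\mathcal P$, adding finite-place conormal terms double counts. So Step~2 never actually produces the local factors $C_\nu$, and your fallback of letting $C_\nu$ become a combination of local Schwartz functions assumes the conclusion.

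The missing idea is to apply the normal-bundle (adjunction) identity you invoke at infinity globally, as for self-intersections in Gross--Zagier:
\[
\wh{\mathcal Z}(y)\cdot\mathcal P:=\wh{\deg}\bigl(\wh{\mathcal O}(\wh{\mathcal Z}(y))|_{\mathcal P}\bigr),
\]
with the normal bundle trivialized by the section that $y$ itself furnishes. On the RSZ side, the Grothendieck--Messing obstruction to lifting $y$ is a section of $\Omega_{\mathcal A}=\wt{\mathcal L}^{-1}$ that cuts out the Kudla--Rapoport divisor, so the normal bundle is $\LL^\vee$. On the generic fiber this section is $z\mapsto\langle z,y\rangle$, whose squared norm is exactly $R_y(z)$. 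Since $Q_s(t)+\tfrac12\log(t-1)$ is smooth at $t=1$, the Green metric and the Hodge metric on the normal bundle differ by a $y$-independent constant at every archimedean place. One obtains
\[
\wh{\mathcal Z}(y)\cdot\mathcal P=-\LL\cdot\mathcal P+c_\infty+\sum_\nu c_\nu(y_\nu)\log N\nu .
\]
Here $c_\nu(y_\nu)$ depends only on $y_\nu$, and for fixed $y$ it is non-zero at only finitely many $\nu$: those where $y_\nu$ is imprimitive, or bad places of the local model. Weighting by $r(g,\tau)\Phi(y)$ and summing over $y\in W^\perp\setminus\{0\}$ gives sums over the proper subspace $W^\perp$ with one modified local component. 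These are degenerate pseudo-theta series of the shape in the statement.
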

\begin{proof}
    The first part of this lemma can be checked easily by the definition of special divisors. To see the second part, we refer to the discussion in \cite[Section 8.2]{YZ1}. Especially, the summation is a degenerate pseudo-theta series in the notation of \cite[Section 2]{Guo1}. The constants $C_\nu$ are crucial in \cite{YZ1}, but their explicit values are not needed here.  More precisely, \cite[Lemma~2.4]{Guo1} does not assert that an isolated degenerate pseudo-theta series is identically zero; rather, once the full combination is automorphic, the degenerate terms do not contribute to the non-degenerate theta/Eisenstein completion or to the constant-term identity used below.
\end{proof}

The same convention will be used for the other improper terms below: we keep them as degenerate pseudo-series in the full automorphic combination, and only discard them after applying the key lemma to the complete comparison.  We do not claim termwise vanishing.

\subsection{Local intersections at archimedean places}\label{Local intersections at archimedean places}
In this subsection, we consider the archimedean part of the height series, i.e., we compute the Green functions of the arithmetic special divisors appeared in the arithmetic generating series. We remind the reader again that we always assume the Schwartz function $\Phi\in\mathcal{S}(\BV)$ is standard at all archimedean places. As mentioned earlier in remark \ref{Green function remark}, the specific Green function constructed here is only weakly admissible. Therefore, we need to provide the explicit differences between these types of Green functions and admissible Green functions.

The discussion in this subsection uses the complex uniformization induced by $V$, which is the nearby coherent Hermitian space of $\BV$ associated with $\iota$. Note that all the discussion remains valid when considering another nearby coherent Hermitian space $V(v)$, since it is well known that for different $v$, the resulting Shimura varieties are isomorphic.

\subsubsection*{Standard coordinate}
Before further discussion, it is important to understand the geometric behavior of the special divisor $Z(x)$ and CM cycle $P$. We will see when $P\subset Z(x)$ as $x$ varies, which helps us understand the lemma \ref{Vanish of improper intersection} above.

For convenience, we choose a special orthogonal basis of $V$ following this decomposition, which we call it the "standard basis". Choose an orthogonal basis $\{x_1,\cdots,x_n\}$ of $W^\perp$
such that $q(x_i)=1$ for any $i$, and choose $\{x_{n+1}\}$ which generates $W$ and
$q(x_{n+1})=d_W=d_V$ for some well-chosen representative $d_W=d_V$ in $F$.

Under this basis, we use projective coordinates to denote
the points in $D$, i.e., for $a_i\in\CC$, $1\le i\le n+1$ with
\begin{equation*}
    \quad  \sum_{i=1}^n |a_i|^2\le -d_{W,\iota} |a_{n+1}|^2,
\end{equation*}
we use coordinate
\begin{equation*}
    (a_1:\cdots:a_{n+1})
\end{equation*}
to denote the points in $D$. Here $d_{W,\iota}$ means the image of $d_W$ under $\iota:F\hookrightarrow\RR$, and $|\cdot|$ is the usual absolute value on $\CC$.

Clearly, under this coordinate, the CM cycle $P$ is represented by
\begin{equation*}
    [(0:\cdots:0:1),h],
\end{equation*}
where the second term $h\in \Res_{F/\QQ}\U(W)$. Clearly, this CM cycle is defined over the reflex field $E$.

We also have an explicit description of the special divisor $Z(x)$ for $x\in V$
with $q(x)_\sigma>0$ at all archimedean places $\sigma$. If $x=\sum_{i=1}^{n+1}b_i x_i$ under the standard basis, we have
\begin{equation*}
    \sum_{i=1}^n |b_i|^2+d_{W,\sigma} |b_{n+1}|^2>0,\ \sigma:F\hookrightarrow\RR.
\end{equation*}
We see immediately that $Z(x)$ consists of points with coordinates
\begin{equation*}
    [(a_1:\cdots:a_{n+1}),h],\quad\sum_{i=1}^n a_ib_i+d_{W,\iota} a_{n+1}b_{n+1}=0,\ h\in G_{W_x}(\wh{\QQ}).
\end{equation*}
Here $W_x$ denotes the dimension 1 subspace generated by $x$. Similarly, one
can extend such coordinate to characterize $Z(x)$ for $x\in U\backslash V_{f}$
in general.

Using this standard coordinate, we know that $P\cap Z(x)\ne\emptyset$ is equivalent to $x\in W^\perp_{\iota,\CC}$. The same result holds if we replace $\iota$ by another archimedean place of $F$ and consider the associated complex uniformization.

Moreover, using this coordinate, recall the tautological bundle $L_D$ on $D$ and
the associated Hermitian line bundle $\overline{L}_D$. The Chern form of $\overline{L}_D$ is given by
\begin{equation*}
    c_1(\overline{L}_D)=\frac{1}{2\pi i}\partial\bar{\partial}\log(1-\sum_{i=1}^n|\frac{z_i}{\sqrt{-d_{W,\iota}}}|^2).
\end{equation*}

\subsubsection*{Weakly admissible Green function}
In order to apply the
theory of admissible extension mentioned in Section \ref{Arakelov}, we need an explicit expression of the Green function $g_{Z(x)}$ for each $Z(x)_U$. The following construction largely follows the discussion in \cite[Section 4.2]{Qiu}, but we need to point out that there are some minor errors there. Note that our Green function is in the same flavor as the one in \cite[Section 8.1.1]{YZZ2}, which follows the original idea of Gross--Zagier \cite{GZ}.

We need to consider the arithmetic subgroup $\Gamma\subset G(\QQ)$, since for a
set of representative $h_1,\cdots,h_m$ of $G(\QQ)\backslash G(\wh{\QQ})/U$, it is natural to consider
\begin{equation*}
    \Gamma_{h_j}=G(\QQ)\cap h_jUh_j^{-1},
\end{equation*}
since the Shimura variety can be decomposed as the disjoint union of $\Gamma_{h_j}\backslash D$. For convenience, we also denote by
\begin{equation*}
    \Gamma_x=\Gamma\cap G_x(\QQ),
\end{equation*}
where $G_x$ is just the subgroup of $G$ associated with $Z(x)$. Thus, we also identify $\Gamma_x\backslash D_x$ as its pushforward in $\Gamma\backslash D$. We also use $\overline{L}_{D_\Gamma}$ for the descent of $\overline{L}_D$.

We first introduce a distance funtion on our Hermitian symmetric domian $D$. Using the
standard projective coordinate of $D$ defined above, the following function is called
"the distance function to $D_x$":
\begin{equation*}
    R_x(z)=-\frac{|\langle x,z\rangle|^2}{\langle z,z\rangle}.
\end{equation*}
Here $\langle\cdot,\cdot\rangle$ is the Hermitian form on $V_{\iota,\CC}$, the $z$-term on the left hand side means a point in $D$, while the $z$-term on the
right hand side means a vector in $V_{\iota,\CC}$. It is not hard to see that
this distance function is invariant under the action of $\U(V)$.  The variable used below,
\[
 t(x,z):=1+R_x(z),
\]
is exactly Qiu's variable in \cite[Section~4.2.2]{Qiu}.  When $n=1$, if $d(x,z)$ denotes
Yuan's hyperbolic cosine in \cite[Section~4.3]{Yuan1}, the two variables are related by
\begin{equation}\label{Qiu Yuan archimedean variables}
 t=\frac{d+1}{2},\qquad d=2t-1.
\end{equation}
This distinction is important for comparing the two hypergeometric normalizations.

We remark that, under the standard coordinate, if we choose $z_0=(0:\cdots:0:1)$, let
$x=\sum_{i=1}^{n+1}b_i x_i$ as above, it is obvious that
\begin{equation*}
    R_x(z_0)=-|b_{n+1}|^2\cdot d_{W,\iota}.
\end{equation*}

In order to get a $G_{z_0}$-invariant smooth function $m$ on $D-D_{z_0}$, such that $m$ can be extended smoothly to $D$ up to a logarithm function, we need to consider the Laplacian equation
\begin{equation*}
    \Big(\frac{i}{2\pi}\partial\bar{\partial}m\Big)c_1(L_D)^{n-1}=\frac{s(s+n)}{2}m\cdot c_1(L_D)^n.
\end{equation*}
Note that the quotient of $D-D_{z_0}$ by $G_{z_0}$ is isomorphic to $(1,\infty)$ via
\begin{equation*}
    z=(z_1:z_2:\cdots:z_n:1)\mapsto t(z):=1+R_{z_0}(z).
\end{equation*}

We use the Green-function normalization dictated by \eqref{normalized RSZ intersection}.
Thus we look for a family of smooth functions $Q_s$ on $(1,\infty)$ such that
\[
 Q_s(t)+\frac12\log(t-1)
\]
extends smoothly across $t=1$ and $Q_s(t)\to0$ as $t\to\infty$.  The computation of the
K\"ahler form and the $G_{z_0}$-invariance is the same as in \cite[(4.3)--(4.7)]{Qiu};
only the normalization of the Green function is halved.  Hence the differential equation is

\begin{equation}\label{differential equation}
    (t-t^2)\frac{\de^2 Q}{\de t^2}+(n-(n+1)t)\frac{\de Q}{\de t}+s(s+n)Q=0,\ t>1.
\end{equation}

The required solution is one half of Qiu's $Q_s$:

\begin{equation}\label{Q function}
    Q_s(t)=\frac{\Gamma(s+n)\Gamma(s+1)}{2\Gamma(2s+n+1)t^{s+n}}F(s+n,s+1,2s+n+1;\frac{1}{t}),\ t>1.
\end{equation}
where $F$ is the hypergeometric function, i.e.,
\begin{equation*}
    F(a,b,c;t)=\sum_{n=0}^\infty\frac{(a)_n(b)_n}{(c)_n}\cdot\frac{t^n}{n!},
\end{equation*}
where $(a)_n=a(a+1)\cdots(a+n-1)$.

There is no error in Qiu's formula: \cite[(4.7)]{Qiu} uses the squared-norm
Gillet--Soul\'e convention and therefore has no factor $1/2$.  Our factor $1/2$ is a
deliberate normalization change.  The curve case gives a useful independent check.
If $n=1$ and $d$ is Yuan's hyperbolic cosine, then \eqref{Qiu Yuan archimedean variables}
and the hypergeometric expression in the proof of \cite[Lemma~4.2]{Yuan1} give the exact identity
\begin{equation}\label{Qiu Yuan Green comparison}
 Q_s^{\mathrm{ours}}\!\left(\frac{d+1}{2}\right)=Q_s^{\mathrm{Yuan}}(d).
\end{equation}
Indeed, substituting $t=(d+1)/2$ in \eqref{Q function} produces the factor
$2^s\Gamma(s+1)^2/\Gamma(2s+2)$ and the argument $2/(d+1)$ appearing in Yuan's
Legendre function.  Thus the present normalization is precisely the one used in the
quaternionic curve calculation.

Next, we have Green functions for $D_x$ on $D$:
\begin{equation*}
    m_{x,s}(z)=Q_s(1+R_x(z)),\quad z\in D-D_x.
\end{equation*}
In the further discussion, we always choose a fixed $z$ but let the $x$ varies in $V$,
and we also change the archimedean place $v$ to have different complex uniformization of the Shimura variety. Thus, we also use the notation
\begin{equation}\label{m series}
    m_{v,s}(x):=m_{x,s}(z)
\end{equation}
when considering the distance function on $X_{U,v}(\CC)$. The benefit of this notation is that, $m_{v,s}$ is in fact the counterpart of the function $k_{v,s}$ defined in \cite[Thm 3.3]{Guo1}.

Also note that if $z=z_0$ is the special point as above, it is straight forward to see that
\begin{equation}\label{m series with coordinate}
    m_{v,s}(x)=Q_s(1-q(x_2)),
\end{equation}
where $x=x_1\oplus x_2$ under the decomposition $V=W^\perp\oplus W$, and $q$ is the negative Hermitian form of $W(v)$ here.

Moreover, we define Green functions for $\Gamma_x\backslash D_x$ on $\Gamma\backslash D$ as follows. Define
\begin{equation*}
    g_{x,s}(z)=\sum_{\gamma\in \Gamma}\gamma^* m_{x,s}(z).
\end{equation*}
In fact, we have the following lemma.
\begin{lemma}\label{Integral of Green function}
    There is a meromorphic continuation of $g_s$ to $s\in\CC$ with a simple pole at $s=0$ and residue
    \begin{equation*}
        -\frac{\deg_{\overline{L}_{D_\Gamma}}(\Gamma_x\backslash D_x)}{2\deg(\overline{L}_{D_\Gamma})},
    \end{equation*}
    and the function $\wt{\lim}_{s\rightarrow0} g_s$ is a weakly admissible Green function for $\Gamma_x\backslash D_x$. Furthermore, we have
    \begin{equation*}
        \int_{\Gamma\backslash D}(\wt{\lim}_{s\rightarrow0}g_s)c_1(\overline{L}_D)^n=-\frac{\deg_{\overline{L}_{D_\Gamma}}(\Gamma_x\backslash D_x)}{2n}.
    \end{equation*}
    Note that we set $\deg(\overline{L}_{D_\Gamma})=\int_{\Gamma\backslash D}c_1(\overline{L}_{D_\Gamma})^n$, and $\deg_{\overline{L}_{D_\Gamma}}(\Gamma_x\backslash D_x)=\int_{\Gamma_x\backslash D_x}c_1(\overline{L}_{D_\Gamma})^{n-1}$.
\end{lemma}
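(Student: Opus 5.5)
The plan is to follow the Gross--Zagier/Oda--Tsuzuki approach via the spectral theory of the $G_{z_0}$-invariant Laplacian, as in \cite[Section 4.2]{Qiu} and \cite[Section 8.1]{YZZ2}, adapted to the halved normalization of $Q_s$.

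\textbf{Convergence and the Green property.} First I will show that $g_{x,s}$ converges absolutely for $\Re s>0$ and is a Green function away from $s=0$. Since $Q_s(t)\sim c\,t^{-s-n}$ as $t\to\infty$, the sum over $\Gamma_x\backslash\Gamma$ converges by comparison with a standard Poincar\'e-type series. Near $\Gamma_x\backslash D_x$, only finitely many terms are singular, and each term has the logarithmic singularity $-\frac12\log(t-1)=-\frac12\log R_x$. This is exactly the singularity required by the archimedean convention of \eqref{normalized RSZ intersection}. Moreover, $g_{x,s}$ satisfies the eigen-equation $\Delta g_{x,s}=\frac{s(s+n)}{2}g_{x,s}$ off the divisor, in the sense of the differential equation \eqref{differential equation}. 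Consequently $dd^c g_{x,s}+\delta_{D_x}$ is smooth for each $s$.

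\textbf{Meromorphic continuation and the pole at $s=0$.} I will expand $g_{x,s}$ spectrally in $L^2(\Gamma\backslash D)$, using the resolvent of the Laplacian. The only contribution that becomes singular at $s=0$ is the projection onto the constant eigenfunction, where the eigenvalue $\frac{s(s+n)}{2}$ vanishes; the rest of the spectrum is bounded away from $0$, and compactness of $\Gamma\backslash D$ removes any continuous spectrum. The constant coefficient is computed by integrating $g_{x,s}$ against $c_1(\overline L_D)^n$. Unfolding the sum over $\Gamma_x\backslash\Gamma$ gives $\mathrm{vol}(\Gamma_x\backslash D_x)$ times a one-variable integral $\int_1^\infty Q_s(t)\,\mu(t)\,dt$, where $\mu$ is the transverse measure in the variable $t$. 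This integral can be evaluated either from the ODE, by integrating the equation against $\mu$ and using the boundary behaviour at $t=1$ and $t=\infty$, or directly from the hypergeometric formula \eqref{Q function}. The boundary term at $t=1$ comes from the $-\frac12\log(t-1)$ singularity, so the integral should come out as $-\frac{1}{s(s+n)}$ times a constant. Dividing by $\deg(\overline L_{D_\Gamma})$ then yields the stated residue $-\deg(\Gamma_x\backslash D_x)/(2\deg)$. The factor $\frac12$ is exactly the halving of the normalization relative to Qiu.

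\textbf{The constant-term integral.} The integral identity in the lemma follows from the same unfolding. The quasi-limit $\wt\lim_{s\to0}$ removes the polar part, and the remaining constant equals the regular value at $s=0$ of $\frac{1}{s(s+n)}\cdot\mathrm{const}$ after the pole $\frac{1}{ns}$ is subtracted. This produces the factor $\frac{1}{2n}$ in $-\deg(\Gamma_x\backslash D_x)/(2n)$. The quasi-limit is again a Green function for $\Gamma_x\backslash D_x$, with harmonic curvature form $dd^c g+\delta=\frac{\deg(D_x)}{\deg}\,c_1(L)$ up to the constant eigenfunction. That identity is precisely the weak admissibility condition (the condition (1) analogue at infinity), so the quasi-limit is weakly admissible.

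\textbf{Main obstacle.} I expect the hardest step to be the precise computation of $\int_1^\infty Q_s\,\mu\,dt$, including the measure induced by $c_1(\overline L_D)^n$ in the coordinate $t$ (it should be proportional to $(t-1)^{n-1}dt$), while keeping the $\frac12$ normalization and the conventions for the $2\pi$ factors consistent. I would first try the ODE/boundary-term method and then check the result against the hypergeometric formula for $n=1$ using \eqref{Qiu Yuan Green comparison}.
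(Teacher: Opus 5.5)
\textbf{Comparison with the paper.} Your route differs from the paper's. The paper computes nothing: it quotes Qiu's residue and regularized integral for his squared-norm kernel (resting on Oda--Tsuzuki) and halves both, because \eqref{Q function} is half of Qiu's kernel. A direct unfolding computation would be a useful independent check. As sketched, however, it fails at exactly the step you flag, and the failure matters.

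\textbf{The key computation.} Write $\deg_x=\deg_{\overline L_{D_\Gamma}}(\Gamma_x\backslash D_x)$. In self-adjoint form, \eqref{differential equation} reads
\[
 \frac{d}{dt}\bigl(t^n(t-1)Q_s'(t)\bigr)=s(s+n)\,t^{n-1}Q_s(t).
\]
So the transverse weight is $t^{n-1}\,dt$, not $(t-1)^{n-1}\,dt$. A direct computation in the ball model agrees: $\int_{\Gamma_x\backslash D}f(t)\,c_1(\overline L_D)^n=n\deg_x\int_1^\infty f(t)\,t^{n-1}\,dt$. Now integrate the ODE from $1$ to $\infty$. The boundary term at $\infty$ vanishes for $\Re s>0$. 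At $t=1$ the singularity $-\frac12\log(t-1)$ gives $t^n(t-1)Q_s'\to-\frac12$. Hence $\int_1^\infty Q_s\,t^{n-1}dt=\frac{1}{2s(s+n)}$, which is positive, as it must be since $Q_s>0$. Therefore
\[
 \int_{\Gamma\backslash D}g_s\,c_1(\overline L_D)^n=\frac{\deg_x}{2}\Bigl(\frac1s-\frac1{s+n}\Bigr).
\]
Its constant term at $s=0$ is $-\deg_x/(2n)$, which is the integral formula. Its residue, however, is $+\deg_x/2$.

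\textbf{The sign gap.} Your claim that this computation yields both displayed values cannot hold. For any constant $K$, the function $K/(s(s+n))$ has residue $K/n$ and constant term $-K/n^2$ at $s=0$, and these have opposite signs. Your ``$-\frac{1}{s(s+n)}$ times a constant'', chosen to match the negative residue, would force the constant term to be $+\deg_x/(2n)$. Positivity decides which sign is right. For real $s>0$ the series converges and $g_s>0$ off the divisor, so the constant residue $\lim_{s\to0^+}s\,g_s$ is nonnegative. Carried out correctly, your method proves the integral identity and gives $\mathrm{Res}_{s=0}g_s=+\deg_x/(2\deg(\overline L_{D_\Gamma}))$. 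So the minus sign in the displayed residue is incompatible with the positive kernel \eqref{Q function}. The plus sign is also what Proposition~\ref{Comparison of archimedean K and M} needs: it requires $2\,\mathrm{Res}_{s=0}\mathcal M^{(v)}_\Phi(s,g,\tau)=-E_*(0,g,r(\tau)\Phi)$, which after \eqref{Geometric Siegel Weil} means coefficient $+\frac12$.

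\textbf{Two smaller points.} First, for $s\neq0$ the current $dd^cg_s+\delta_{D_x}$ is not smooth. The eigen-equation makes its wedge with $c_1(L)^{n-1}$ a multiple of $g_s\,c_1(L)^n$, which has a logarithmic singularity. Only at the quasi-limit, where the pole is a constant and the eigenvalue vanishes, do you get $(dd^cg_0+\delta_{D_x})\wedge c_1(L)^{n-1}=\frac{\deg_x}{\deg(\overline L_{D_\Gamma})}\,c_1(L)^n$. Second, weak admissibility needs exactly that top-degree identity. It does not need the $(1,1)$-form identity you wrote, which the eigen-equation does not give when $n>1$.
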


\begin{proof}

Qiu proves for his squared-norm Green function that the residue is
\[
 -\frac{\deg(\Gamma_x\backslash D_x)}{\deg(\Gamma\backslash D)},
\]
and that its regularized integral is
\[
 -\frac{\deg_{\overline L_{D_\Gamma}}(\Gamma_x\backslash D_x)}{n}.
\]
see \cite[Theorem~4.2.2 and (4.9)]{Qiu}, ultimately based on
\cite[Proposition~3.1.2 and Theorem~7.8.1]{OT}.  Our kernel
\eqref{Q function} is exactly one half of Qiu's kernel, so both the residue and the
regularized integral are divided by $2$.  This gives the two formulas in the lemma.

\end{proof}

\begin{remark}\label{multiple of 2 archimedean remark}

The factor $1/2$ here is a normalization difference, not a correction to \cite{Qiu}.
It can be checked without referring to the higher-dimensional calculation.  When $n=1$,
\eqref{Qiu Yuan Green comparison} identifies our kernel with Yuan's Green kernel, while
\cite[Lemma~4.2]{Yuan1} gives
\[
 \int g_{\mathrm{Yuan}}(P,\cdot)c_1(\LL_B)=-1.
\]
On the common generic curve one has $L=\frac12 L_B$; hence
\[
 \int g_{\mathrm{Yuan}}(P,\cdot)c_1(L)=-\frac12,
\]
which is exactly the case $n=1$ of Lemma~\ref{Integral of Green function}.  Conversely,
Qiu's doubled kernel has integral $-1$ against the tautological line in his normalization.

\begin{sloppypar}
There is also no discrepancy in the global arithmetic formulas.  Our normalized
archimedean pairing sums over all complex embeddings of the CM field and contains no extra
factor $1/2$.  Since there are $2[F:\QQ]$ such embeddings, the half-Green normalization
produces the same total contribution as the squared-norm convention after conjugate embeddings
are grouped.  In particular, the error term below remains
\[
 -\frac{[F:\QQ]}{n}Z^*.
\]
\end{sloppypar}

\end{remark}

\subsubsection*{Green function for generating series}
It remains to define the formal sum of the above Green functions in terms of the generating series. Let $v$ be an archimedean place of $F$ in the following discussion.

We define the Green functions $\mathcal{M}_{Z_t(\Phi)}(z,h)$ for each weighed special divisor $Z_t=Z_t(\Phi)_U$ with $t\in F^+$, $z\in D$ and $h\in G(\wh{\QQ})$. Consider the formal sum
\begin{equation*}
    \sum_{y\in V_t}\Phi(h^{-1}y)m_{y,s}(z),
\end{equation*}
if $(z,h)$ is not over $Z_t$, this formal sum is absolutely convergent. Thus, such formal sum
descends to $X_U(\CC)-Z_t(\Phi)_\CC$, which we denote by $\mathcal{M}_{Z_t,s}(z,h)$.

Finally, we define a series $\mathcal{M}^{(v)}_\Phi(g,\tau)$, which is the counterpart of the series $\overline{\mathcal{K}}^{(v)}_\Phi(g,\tau)$ defined in \cite[Thm 3.3]{Guo1}. Let
\begin{equation}\label{M series}
    \mathcal{M}^{(v)}_\Phi(g,\tau)=\wt{\lim}_{s\rightarrow0}\sum_{y\in V-W^\perp}m_{v,s}(y)r(g,\tau)\Phi(y).
\end{equation}
Here $V=V(v)$ is the nearby coherent Hermitian space of $\BV$ with respect to $v$, $\tau\in E^1\backslash E^1(\BA)$ is viewed as an element in $G(\wh{\QQ})$.

It is not hard to see that
\begin{equation*}
    \mathcal{M}^{(v)}_\Phi(g,\tau)=\sum_{t\in F^+}\mathcal{M}_{Z_t,s}(z_0,\tau),
\end{equation*}
where $z_0=(0:\cdots:0:1)$ as above. The sum in \eqref{M series} deliberately excludes $W^\perp$, which is exactly the improper-intersection range of Lemma~\ref{Vanish of improper intersection}.  Those terms are not asserted to vanish individually; they are retained separately as degenerate pseudo-theta terms and disappear only after the key lemma is applied to the complete automorphic comparison. Thus, we further conclude that
\begin{equation*}
     \sum_{t\in F^+}g_{Z_t,v}(P_v(\CC))q^t=\frac{1}{2L(1,\eta)}\int_{E^1\backslash E^1(\BA)}\mathcal{M}^{(v)}_\Phi(g,\tau) d\tau.
\end{equation*}
Note that the right hand side is well-defined, since $E^1\backslash E^1(\BA)/(U\cap E^1(\BA))$ is a finite set, while $\vol(E^1\backslash E^1(\BA))=2L(1,\eta)$. Here
$q^t=W^\mm_t(g_\infty)$ since our Schwartz function is always standard at archimedean places.

\subsubsection*{Explicit error terms}
Now let us calculate the error terms arising from the weakly admissible property, as we explained in Remark \ref{Green function remark}. For convenience, for each archimedean place $v$, denote by $\mathcal{G}_{Z_t(\Phi)}(z,h)$ the admissible Green function for each weighted special divisor $Z_t$ at $v$. Using the result in Lemma \ref{Integral of Green function}, it is not hard to conclude that
\begin{equation*}
    \mathcal{M}_{Z_t(\Phi)}(z,h)-\mathcal{G}_{Z_t(\Phi)}(z,h)=-\frac{1}{2n}\cdot\frac{c_1(L)^{n-1}\cdot Z_t(\Phi)}{c_1(L)^n}.
\end{equation*}
We remind the reader that this term is the explicit difference between our weakly admissible Green function and the admissible Green function.

As a conclusion, this extra term in $\wh{Z}(g,\Phi)\cdot\hat{\xi}$ can be written as
\begin{equation*}
    -\frac{[F:\QQ]}{n}Z_*(g,\Phi)\cdot c_1(L)^{n-1}.
\end{equation*}
Here $Z_*(g,\Phi)$ denotes the non-constant terms of the generating series. Note that the coefficient $\frac{1}{2}$ is disappeared in this expression, since the arithmetic intersection in our setting is always defined over $\mathcal{O}_E$, and there are $2[F:\QQ]$ archimedean places of $E$.

Apply the "geometric Siegel--Weil formula" \ref{Geometric Siegel Weil}, we conclude the following proposition.
\begin{proposition}\label{extra term}
    This extra term is
    \begin{equation*}
       \frac{[F:\QQ]}{n} E_*(0,g,\Phi).
    \end{equation*}
\end{proposition}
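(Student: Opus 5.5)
The plan is to start from the expression for the extra term just derived and substitute the geometric Siegel--Weil formula \eqref{Geometric Siegel Weil}. The extra term in $\wh{\mathcal Z}(g,\Phi)\cdot\widehat\xi$ was shown to be
\[
 -\frac{[F:\QQ]}{n}\,Z_*(g,\Phi)\cdot c_1(L)^{n-1},
\]
where the ratio $c_1(L)^{n-1}\cdot Z_t/c_1(L)^n$ has already absorbed the normalization by $\deg_L(X)$. One subtlety matters here: pairing the constant shift $\mathcal M_{Z_t}-\mathcal G_{Z_t}$ against $\widehat\xi=\LL^n/\deg_L(X)$ yields exactly that constant on each component, because $\widehat\xi$ has degree one on each component. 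So the first step is to re-verify, using Lemma~\ref{Integral of Green function} and Remark~\ref{Green function remark}, that the per-embedding constant is $-\frac{1}{2n}\deg_L(Z_t)/\deg_L(X)$.

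The second step sums this constant over all $2[F:\QQ]$ complex embeddings of $E$, as in \eqref{normalized RSZ intersection}. This produces $-\frac{[F:\QQ]}{n}\deg_L(Z_t)/\deg_L(X)$ for each $t$. Forming the generating series with $q^t=W^\mm_t(g_\infty)$ then gives $-\frac{[F:\QQ]}{n}\deg_L(Z_*(g,\Phi))/\deg_L(X)$. This step uses that $\Phi$ is standard at infinity, so the archimedean Weil-representation factors match the coefficients in $Z_*(g,\Phi)$.

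The third step applies \eqref{Geometric Siegel Weil}, restricted to non-constant Fourier coefficients: $\deg_L(Z_*(g,\Phi))=-\deg_L(X_U)E_*(0,g,\Phi)$. This is legitimate because both sides are modular (Theorem~\ref{Modularity}) and the identity holds coefficientwise for $t\in F_+$. Dividing by $\deg_L(X)$ turns the two minus signs into $\frac{[F:\QQ]}{n}E_*(0,g,\Phi)$.

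I expect the main obstacle to be sign and normalization bookkeeping rather than anything conceptual. Three points need care. First, the factor $\frac12$ in the half-Green kernel must cancel against the doubling from conjugate embeddings (Remark~\ref{multiple of 2 archimedean remark}). Second, \eqref{Geometric Siegel Weil} is written with $E$ rather than $E_*$, so I would check that its constant term corresponds to $[L^\vee]$ and separates cleanly. Third, the degree $\deg_L(X_U)$ must be taken on the same component-wise normalization as the Green-function integral. For the second point, I would first compare constant terms: the constant term of $Z(g,\Phi)$ is $[L^\vee]r(g)\Phi(0)$, whose $L$-degree is $-\deg_L(X)r(g)\Phi(0)$, and this should match the constant term of $E(0,g,\Phi)$. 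That match confirms the sign convention before passing to $E_*$.
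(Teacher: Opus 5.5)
Your proposal is correct and follows the paper's argument: the paper also takes the per-embedding admissibility constant $-\frac{1}{2n}\deg_L(Z_t)/\deg_L(X)$ from Lemma~\ref{Integral of Green function}. It sums this over the $2[F:\QQ]$ complex embeddings of $E$ to get $-\frac{[F:\QQ]}{n}\deg_L(Z_*(g,\Phi))/\deg_L(X)$, and then applies the geometric Siegel--Weil formula \eqref{Geometric Siegel Weil}, so that the two minus signs cancel. Your constant-term check of the sign and your explicit restriction of \eqref{Geometric Siegel Weil} to $E_*$ are worthwhile additions to the paper's terser bookkeeping, but they do not change the route.
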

Note that the coefficient $\deg_L(X_U)$ disappears since $\hat{\xi}$ is normalized by $\frac{1}{\deg_L(X_U)}$, and the signature $-1$ also disappears since $\wh{\mathcal{Z}}(g,\Phi)\cdot\hat{\xi}$ has signature $-1$ in the height series.

It remains to consider the error term in $\sum_{\sigma}g_{\mathcal{D},\sigma}(P_\sigma(\CC))$. Since $\deg P=1$, for the same reason as above, this difference is also
\begin{equation*}
    \frac{[F:\QQ]}{n} E_*(0,g,\Phi).
\end{equation*}
In fact, we will find that these two error terms cancel each other out.

\subsubsection*{Comparison at archimedean place}
Now, we have already defined the $\mathcal{M}$-series in \ref{M series} for archimedean places. Note that we also have the series $\overline{\mathcal{K}}$ in holomorphic projection defined in \cite[Thm 3.3]{Guo1}. We then have the following proposition which compute their difference. Be aware of the factor 2 in the coefficients, since the arithmetic intersection is carried out over $\mathcal{O}_E$.
\begin{proposition}\label{Comparison of archimedean K and M}
    For any $\tau\in E^1\backslash E^1(\BA)$, we have
    \begin{equation*}
        2\overline{\mathcal{K}}_\Phi^{(v)}(g,\tau)-2\mathcal{M}_\Phi^{(v)}(g,\tau)=(-\sum_{i=1}^{n}\frac{1}{i}+\gamma+\log(4\pi))E_*(0,g,r(\tau)\Phi).
    \end{equation*}
\end{proposition}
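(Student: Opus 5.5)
The plan is to reduce the identity to a termwise comparison of the two archimedean kernels, followed by bookkeeping of the regularization constants. By \eqref{M series} and \eqref{m series with coordinate}, $\mathcal M^{(v)}_\Phi(g,\tau)$ is the regularized value at $s=0$ of
\[
\sum_{y\in V-W^\perp}Q_s\big(1-q(y_2)\big)\,r(g,\tau)\Phi(y),
\]
where $y=y_1+y_2$ is the decomposition along $V=W^\perp\oplus W$. I will first unwind the definition of $\overline{\mathcal K}^{(v)}_\Phi(g,\tau)$ from \cite[Thm 3.3]{Guo1} into the same shape. That is, I will write it as the regularized value of
\[
\sum_{y\in V-W^\perp}k_{v,s}(y)\,r(g,\tau)\Phi(y),
\]
together with the correction term that the quasi-holomorphic projection attaches to the Eisenstein part. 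Here $k_{v,s}(y)$ depends only on $q(y)$ and on $u=-q(y_2)$ through the archimedean Whittaker integral. Because $\Phi_\infty$ is standard, both sides are multiples of $W^{\mm}_t(g_\infty)$ coefficientwise. Replacing $\Phi$ by $r(\tau)\Phi$ is harmless, so it suffices to compare the $t$-th Fourier coefficients for fixed $t\in F^+$ at $g=1$.

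The first key step is the identity $k_{v,s}(y)=Q_s(1-q(y_2))$ up to terms that vanish at $s=0$, with the factor $2$ coming from the fact that we sum over the $2[F:\QQ]$ embeddings of $E$. To prove it, I will write $k_{v,s}$ as its integral over the holomorphic-projection variable, of the form
\[
\int_1^\infty \frac{\de\lambda}{\lambda\,(1+u\lambda)^{s+n}}\cdots .
\]
I will then check that, as a function of $t=1+u$, this integral satisfies the differential equation \eqref{differential equation}, decays like $t^{-s-n}$, and has logarithmic singularity $-\tfrac12\log(t-1)$. Uniqueness of such a solution then forces agreement with \eqref{Q function}, or at least pins down a constant multiple. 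As an independent check of the normalization, the case $n=1$ can be compared with Yuan's kernel through \eqref{Qiu Yuan Green comparison}.

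The second step collects what is left over. The two regularizations $\wt{\lim}_{s\to0}$ differ because $\overline{\mathcal K}$ carries the constant-term part of the holomorphic projection of the derivative of the mixed Eisenstein--theta series. That part is proportional to the non-constant Eisenstein series $E_*(0,g,r(\tau)\Phi)$, and its coefficient comes from differentiating the archimedean Whittaker function in $s$. This derivative produces
\[
\frac{\Gamma'}{\Gamma}(n+1)=-\gamma+\sum_{i=1}^n\frac1i,
\]
together with a $\log(4\pi)$ from the Gaussian normalization, which together give the stated constant $-\sum_{i=1}^n\frac1i+\gamma+\log(4\pi)$.

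The main obstacle is the exact bookkeeping of constants. This means tracking the factor $2$ between embeddings of $E$ and of $F$, the halved Green normalization, and the precise subtraction built into $\overline{\mathcal K}$ in \cite{Guo1}. I would carry this out first for $n=1$, comparing with the analogous archimedean comparison in \cite{YZZ2,Yuan1}, and then follow how $n$ enters only through the shift $s+n$ in $Q_s$.
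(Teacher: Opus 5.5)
There is a genuine gap. The mechanism you propose for producing the right-hand side is not the one at work, and the ingredients that actually produce it are missing from your plan.

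\textbf{How the paper gets the constant.} In the paper's argument, $\overline{\mathcal K}^{(v)}_\Phi(g,\tau)$ is just the quasi-limit $\wt{\lim}_{s\to0}$ of $\sum_{y\in V-W^\perp}k_{v,s}(y)\,r(g,\tau)\Phi(y)$. No Eisenstein correction is built into it; the other terms produced by the quasi-holomorphic projection (the $c_1$-, $c_{\Phi_v}$- and $\log\delta_f$-terms) are listed separately in Section~\ref{Proof of the main theorem}. What the paper shows is that the two kernels differ by an $s$-dependent Gamma ratio:
\[
k_{v,s}(y)=c(s)\,Q_s\big(1-q(y_2)\big)+O_s\big(|q(y_2)|^{-s-n-1}\big),\qquad c(s)=\frac{\Gamma(2s+n+1)}{(4\pi)^s\,\Gamma(s+n+1)},\quad c(0)=1,
\]
with the remainder vanishing at $s=0$. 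Your claim that $k_{v,s}=Q_s$ ``up to terms vanishing at $s=0$'' is true termwise, but it discards exactly what matters. By Lemma~\ref{Integral of Green function}, $\mathcal M^{(v)}_\Phi(s,g,\tau)=\sum_{y\in V-W^\perp}Q_s(1-q(y_2))\,r(g,\tau)\Phi(y)$ has a simple pole at $s=0$. Hence
\[
2\overline{\mathcal K}^{(v)}_\Phi(g,\tau)-2\mathcal M^{(v)}_\Phi(g,\tau)=2\,\wt{\lim}_{s\to0}\big(c(s)-1\big)\mathcal M^{(v)}_\Phi(s,g,\tau)=2c'(0)\,\Res_{s=0}\mathcal M^{(v)}_\Phi(s,g,\tau),
\]
where $c'(0)=\frac{\Gamma'}{\Gamma}(n+1)-\log(4\pi)=\sum_{i=1}^n\frac1i-\gamma-\log(4\pi)$.

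So the constant is the $s$-derivative of the ratio relating the two kernels, and the Eisenstein series enters only through the residue. Lemma~\ref{Integral of Green function} expresses the residue through $\deg_L Z_*(g,r(\tau)\Phi)/\deg_L(X)$, with the factor $\frac12$ of the halved kernel absorbing the $2$. The geometric Siegel--Weil formula \eqref{Geometric Siegel Weil} then turns this into $E_*(0,g,r(\tau)\Phi)$.

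Your outline contains none of these three steps: the pole of $\mathcal M(s)$, the residue lemma, or \eqref{Geometric Siegel Weil}. Carried out as written, your first step would give $\overline{\mathcal K}=\mathcal M$. Your second step would then look for a correction term that the definition does not contain.

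\textbf{The ODE step also fails.} For $s\neq0$, the holomorphic-projection integral $\int_1^\infty\frac{\de t}{t(1-qt)^{s+n}}$ is not an exact solution of \eqref{differential equation}. Any decaying solution is a multiple of $Q_s$. For the integral, the ratio of its $\log(t-1)$-coefficient to its $t^{-s-n}$-coefficient is $-(s+n)$. For $Q_s$ the same ratio is $-\Gamma(2s+n+1)/\big(\Gamma(s+n)\Gamma(s+1)\big)$. These agree only if $\Gamma(2s+n+1)=\Gamma(s+n+1)\Gamma(s+1)$, which holds at $s=0$ but already fails to first order in $s$. This is why the paper has only an asymptotic identity with an $O_s(|q|^{-s-n-1})$ remainder.

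Uniqueness therefore pins down the comparison only at $s=0$. Because of the pole, you need it to first order in $s$, with a remainder whose sum over $y$ is holomorphic and vanishes at $s=0$. The paper obtains this from the explicit hypergeometric evaluation following \cite{GZ} and \cite[(6.15),(6.17)]{Qiu}.
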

\begin{proof}
    We mainly refer to \cite[Proposition 4.5]{Yuan1}, which computes the difference in the case of $n=1$. Note that the coefficient in the "geometric Siegel--Weil formula" is 1 not $\frac{1}{2}$, hence the coefficients in our computation here are slightly different.

    Following the calculation in \cite{GZ} and also the argument in \cite[(6.15),(6.17)]{Qiu}, we have
    \begin{equation*}
        \int_1^\infty \frac{1}{t(1-q t)^{s+n}}dt=\frac{2\Gamma(2s+n+1)}{\Gamma(s+n)\Gamma(s+1)}Q_s(1-q)+O_s(|q|^{-s-n-1}).
    \end{equation*}
    Moreover, the error term $O_s(|q|^{-s-n-1})$ vanishes at $s=0$. Thus, we conclude that
    \begin{equation*}
        k_{v,s}(y)=\frac{\Gamma(2s+n+1)}{(4\pi)^s\Gamma(s+n+1)}Q_s(1-q(y_2))+O_s(|q(y_2)|^{-s-n-1}).
    \end{equation*}
    Now, following \ref{m series with coordinate}, if we denote by $\mathcal{M}_\Phi^{(v)}(s,g,\tau)=\sum_{y\in V-W^\perp}m_{v,s}(y)r(g,\tau)\Phi(y)$, then by definition, we conclude that
    \begin{equation*}
        2\overline{\mathcal{K}}_\Phi^{(v)}(g,\tau)-2\mathcal{M}_\Phi^{(v)}(g,\tau)=2\wt{\lim}_{s\rightarrow0}\Big(\frac{\Gamma(2s+n+1)}{(4\pi)^s\Gamma(s+n+1)}-1\Big)\mathcal{M}_\Phi^{(v)}(s,g,\tau).
    \end{equation*}
    By checking the derivative of this coefficient at zero, it turns out that
    \begin{equation*}
         2\overline{\mathcal{K}}_\Phi^{(v)}(g,\tau)-2\mathcal{M}_\Phi^{(v)}(g,\tau)=2(\sum_{i=1}^n\frac{1}{i}-\gamma-\log(4\pi))\Res_{s=0}\mathcal{M}_\Phi^{(v)}(s,g,\tau).
    \end{equation*}
    Now, applying the first claim in Lemma \ref{Integral of Green function} and the "geometric Siegel--Weil formula" \ref{Geometric Siegel Weil}, we conclude this proposition.
\end{proof}

\subsection{The i-part at non-archimedean places}\label{The i-part at non-archimedean places}

In this subsection, the finite contribution is always interpreted on the RSZ integral model and, after non-archimedean uniformization, on a \emph{relative} Rapoport--Zink space.  The absolute $p$-divisible groups occurring in the RSZ moduli problem satisfy an Eisenstein, or $(\mathfrak A_v,\mathfrak B_v)$-strict, condition.  By the comparison theorem of \cite[Theorem 5.12 and Corollary 5.14]{LMZ}, their relevant factor is equivalent to a strict $\mathcal O_{F_v}$-module with $\mathcal O_{E_v}$-action; duality on the latter category is Faltings duality.  Thus all deformation spaces, polarizations, Hodge filtrations, and dual morphisms below are understood in the relative sense.  This also removes any assumption that $F_v/\QQ_p$ be unramified.  We use relative Dieudonn\'e theory, or equivalently relative display theory, throughout; see \cite[Section 9.1]{MZ} and \cite[Sections 4--6]{LMZ}.

The local formulas below are independent of the ambient dimension $n$, but the reason is local rather than the false assertion that every higher-dimensional special divisor restricts term-by-term to the fixed Shimura curve.  After writing $y=y_1+y_2$ according to $V_v=W_v^\perp\oplus W_v$, the deformation problem controlling the intersection with the CM cycle is carried by the relative height-two factor generated by $y_2$.  This is the same one-dimensional canonical or quasi-canonical lifting problem that occurs on Shimura curves.  Globally, restriction to a fixed curve instead produces the theta convolution described in Lemma~\ref{restriction to unitary Shimura curve}.

We only consider proper intersections.  By Lemma~\ref{Vanish of improper intersection}, the improper terms form degenerate pseudo-theta series and do not contribute to the final constant-term identity.  For convenience, we also assume
\begin{equation*}
    g\in 1_{\Sigma_\ram}\UU(\BA^{\Sigma_\ram});
\end{equation*}
the same argument works for $g\in P(\BA_{\Sigma_\ram})\UU(\BA^{\Sigma_\ram})$.  Since the analytic decomposition is indexed by places of $F$, we index the local intersections by a finite place $v$ of $F$.  If $v$ is non-split in $E$, we write $\nu$ for the unique place of $E$ above $v$; if $v$ is split, we write $\nu_1,\nu_2$ for the two places above it.  A place of the common RSZ reflex field above $\nu$ is suppressed from the notation, because the normalization in \eqref{normalized RSZ intersection} makes the result independent of this auxiliary base extension.

\subsubsection*{Basic relative setup}
Let $v$ be non-split in $E/F$, and let $\breve E_\nu$ be the completion of a maximal unramified extension of $E_\nu$.  Let $\BE$ be the one-dimensional supersingular strict formal $\mathcal O_{F_v}$-module of relative height two, endowed with an $\mathcal O_{E_v}$-action of signature $(1,0)$.  Equivalently, after choosing an embedding
\begin{equation*}
    \alpha:E_v\hookrightarrow B,
\end{equation*}
where $B/F_v$ is the division quaternion algebra, the image of $\mathcal O_{E_v}$ lies in the maximal order $\End_{\mathcal O_{F_v}}(\BE)$.  Let $\overline{\BE}$ denote the same strict $\mathcal O_{F_v}$-module with conjugate $\mathcal O_{E_v}$-action.

Write $X^{\vee_F}$ for the Faltings dual of a strict $\mathcal O_{F_v}$-module $X$.  Fix an $\mathcal O_{F_v}$-relative principal polarization
\begin{equation*}
    \lambda_{\BE}:\BE\longrightarrow\BE^{\vee_F}
\end{equation*}
whose Rosati involution induces the non-trivial automorphism of $E_v/F_v$.  Let $\mathcal E$ and $\overline{\mathcal E}$ be the canonical lifts of $\BE$ and $\overline{\BE}$ as strict $\mathcal O_{F_v}$-modules with $\mathcal O_{E_v}$-action.  Faltings duality gives the lifted relative polarization $\lambda_{\mathcal E}$.  The canonical-lifting calculation of \cite[Proposition 3.3]{Gro1}, expressed in this relative category, gives for every $m\geq1$
\begin{equation*}
 \End_{\mathcal O_{F_v}}
 \big(\mathcal E\!\!\pmod{\varpi_{E_v}^{m+1}}\big)
 =\mathcal O_{E_v}+\varpi_{E_v}^{m}
 \End_{\mathcal O_{F_v}}
 \big(\mathcal E\!\!\pmod{\varpi_{E_v}}\big).
\end{equation*}
All occurrences of $(-)^\vee$ in the relative moduli problem below mean $(-)^{\vee_F}$; under the absolute--relative comparison of \cite{LMZ}, this corresponds to Serre duality on the absolute RSZ side.

\subsubsection*{Inert case}
Assume that $v$ is inert in $E/F$, and let $V(v)$ be the nearby coherent Hermitian space at $v$.  No assumption is made on the ramification of $F_v/\QQ_p$.  Let $\mathcal N_n$ be the relative unitary Rapoport--Zink space of signature $(n,1)$ attached to the chosen self-dual level.  Its framing object is
\begin{equation*}
    \BX_n=\overline{\BE}\times\BE^n
\end{equation*}
with product relative polarization $\lambda_n$.  The functor parametrizes strict $\mathcal O_{F_v}$-modules with $\mathcal O_{E_v}$-action, relative polarization, and framing quasi-isogeny.  It is formally smooth over $\Spf\mathcal O_{\breve E_\nu}$ of relative dimension $n$.  The special-homomorphism space
\begin{equation*}
    \Hom_{\mathcal O_{E_v}}(\BE,\BX_n)_\QQ
\end{equation*}
is identified with $V(v)(E_v)$ and carries the Hermitian form
\begin{equation*}
 \langle x,y\rangle
 =\lambda_{\BE}^{-1}\circ y^{\vee_F}\circ\lambda_n\circ x
 \in\End_{\mathcal O_{E_v}}(\BE)_\QQ\cong E_v.
\end{equation*}
The group $\U(V(v)(E_v))$ acts on $\mathcal N_n$ through self-quasi-isogenies of the framing object preserving the relative polarization.  For $0\neq y\in V(v)(E_v)$, the Kudla--Rapoport divisor $\mathcal Z(y)$ is the locus on which the special quasi-homomorphism $y:\BE\dashrightarrow\BX_n$ lifts to an actual homomorphism from the canonical lift $\mathcal E$ to the universal strict module; compare \cite[Section 9.2]{MZ}.

Let $\widehat{\mathcal M}^{\mathrm b}_{\wt\nu,0}$ be the completion of a fixed toric component of the RSZ model along the basic locus.  The basic uniformization theorem, followed by the absolute--relative comparison of \cite[Corollary 5.14 and Theorem 6.8]{LMZ}, gives, after suppressing finite \'etale factors at the other places above $p$,
\begin{equation}\label{Uniformization of X}
 \widehat{\mathcal M}^{\mathrm b}_{\wt\nu,0}
 \cong
 \U(V(v))(F)\backslash
 \big(\mathcal N_n\times\U(\BV_f^v)/U^v\big).
\end{equation}
Under this uniformization, the relative height-two factor gives a closed immersion $\mathcal N_0\hookrightarrow\mathcal N_n$, and
\begin{equation}\label{Local CM cycle}
 \wt{\mathcal P}_{\wt\nu}
 =\frac{1}{d_{\BW,U}}
 \U(W)(F)\backslash
 \big(\mathcal N_0\times\U(\BW_f^v)/U_\BW^v\big),
\end{equation}
up to the toric and reflex-field factors already removed by \eqref{normalized RSZ intersection}.  The moduli-theoretic RSZ divisor, rather than a Zariski closure on a non-PEL model, satisfies
\begin{equation}\label{Local inert Zt divisor}
 \wt{\mathcal Z}_t(\Phi)\big|_{\widehat{\mathcal M}^{\mathrm b}_{\wt\nu,0}}
 =\sum_{y\in\U(V(v))(F)\backslash V(v)_t}
   \sum_{h\in\U_y(\BA_f^v)\backslash\U(\BV_f^v)/U^v}
   \Phi^v(h^{-1}y)[\mathcal Z(y),h].
\end{equation}

Consequently, the $\nu$-part of the normalized local intersection is
\begin{equation*}
 i_\nu(Z_t(\Phi))
 =\frac{1}{2L(1,\eta)}
 \int_{E^1\backslash E^1(\BA)}
 \sum_{y\in V(v)_t-W^\perp}
 m_{\Phi_v}(\tau^{-1}y)r(\tau)\Phi^v(y)\,d\tau.
\end{equation*}
Here the weighted multiplicity is the derived Euler--Poincar\'e characteristic
\begin{equation}\label{Nonarchimedean m series}
 \begin{aligned}
 m_{\Phi_v}(\tau^{-1}y)
 &=m_{r(\tau)\Phi_v}(y)\\
 &=2\,\chi\!\left(
   \mathcal N_n,
   \mathcal O_{\mathcal Z(\tau^{-1}y)}
   \overset{\mathbf L}{\otimes}_{\mathcal O_{\mathcal N_n}}
   \mathcal O_{\mathcal N_0}
 \right).
 \end{aligned}
\end{equation}
The factor $2$ comes from $\log N_\nu=2\log N_v$ at an inert place.  Since $E^1_v=\U(W_v)$ stabilizes $\mathcal N_0$, the multiplicity is invariant under $y\mapsto\tau y$.  For general $g$, put
\begin{equation*}
    m_{r(\tau)\Phi_v}(g,y):=m_{r(g,\tau)\Phi_v}(y).
\end{equation*}
The full local series, whose $t$-th Fourier coefficient is the preceding intersection, is
\begin{equation}\label{nonarchimedean M series}
 \mathcal M_\Phi^{(v)}(g,\tau)
 =\sum_{y\in V(v)-W^\perp}
 m_{r(\tau)\Phi_v}(g,y)\,r(g,\tau)\Phi^v(y).
\end{equation}

The relative height-two deformation calculation gives, for $y=y_1+y_2$ with $y_1\in W_v^\perp$ and $y_2\in W_v$,
\begin{equation*}
 m_{\Phi_v}(y)
 =(v(q(y_2))+1)
 1_{\Lambda_v^\perp}(y_1)
 1_{\mathcal O_{F_v}}(q(y_2)).
\end{equation*}
Thus the agreement with the Shimura-curve multiplicity in \cite[Proposition 8.7]{YZZ2} is explained by the common relative height-two deformation factor, not by moving an arbitrary vector into a fixed two-dimensional Hermitian subspace.

\begin{proposition}\label{Unramified case}
Suppose that $\Phi$ satisfies the assumptions in \cite[Section 4.1]{Guo1}, that $v$ is inert in $E/F$, that $F_v/\QQ_p$ is unramified, and that $\epsilon(\BV_v)=1$. Then
\begin{equation*}
    2k_{\Phi_v}(1,y)=m_{\Phi_v}(y)\log N_v,
\end{equation*}
and hence
\begin{equation*}
    2\mathcal K_\Phi^{(v)}(g,\tau)
    =\mathcal M_\Phi^{(v)}(g,\tau)\log N_v.
\end{equation*}
\end{proposition}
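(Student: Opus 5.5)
The identity is local at $v$ and pointwise in $y$, so the plan is to compute both sides explicitly and compare them. The second identity then follows by summing over $y\in V(v)-W^\perp$ against $r(g,\tau)\Phi^v(y)$. By the reduction at the start of this section, and because $U_v$ is maximal, it suffices to take $g=1$ and $\tau$ trivial at $v$. The case of general $(g,\tau)$ is recovered by replacing $\Phi_v$ with $r(g,\tau)\Phi_v$. Both $k_{\Phi_v}(g,y)$ and $m_{r(\tau)\Phi_v}(g,y)$ are defined through the same Weil-representation action, so this replacement is compatible on the two sides.

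First, the analytic side. Under the hypotheses of \cite[Section 4.1]{Guo1}, $\Phi_v=1_{\Lambda_v}$ is the characteristic function of the self-dual lattice. Here $v$ is inert, $F_v$ is unramified over $\QQ_p$, and $\epsilon(\BV_v)=1$. I will use the explicit formula for $k_{\Phi_v}(1,y)$ from \cite[Section 4]{Guo1}, which comes from differentiating the local Whittaker function $W_{q(y_2)}(s,1,\Phi_v)$ for the one-dimensional space $W_v$ and multiplying by the factor $\Phi_v(y_1)$ from $W_v^\perp$. The relevant local Whittaker function is the standard inert unramified one,
\[
W_u(s)\propto\sum_{i=0}^{v(u)}(-N_v^{-s})^i\quad\text{on }\mathcal O_{F_v}.
\]
Its derivative at $s=0$, after normalizing by $W_u(0)$, gives $\tfrac12(v(u)+1)\log N_v\cdot 1_{\mathcal O_{F_v}}(u)$ on the relevant parity class. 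Since $y_2$ lies in the nearby space $V(v)$ with the opposite Hasse invariant, $q(y_2)$ has odd valuation. Hence
\[
2k_{\Phi_v}(1,y)=(v(q(y_2))+1)\,1_{\Lambda_v^\perp}(y_1)\,1_{\mathcal O_{F_v}}(q(y_2))\log N_v.
\]
I expect this to agree with the $n=1$ computation of \cite[Proposition 8.7]{YZZ2} and \cite{Yuan1} up to the factor $2$ discussed in Remark \ref{Kodaira-Spencer remark}.

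Second, the geometric side. I will invoke the displayed relative height-two deformation formula,
\[
m_{\Phi_v}(y)=(v(q(y_2))+1)\,1_{\Lambda_v^\perp}(y_1)\,1_{\mathcal O_{F_v}}(q(y_2)).
\]
To justify it, decompose $\BX_n=\overline{\BE}\times\BE^n$ so that $\mathcal N_0$ is the locus where the $\BE^n$-factor is the canonical lift. The derived intersection $\mathcal Z(y)\cap^{\mathbf L}\mathcal N_0$ is then controlled by two things. The component $y_1$ must be integral, which gives $1_{\Lambda_v^\perp}(y_1)$, because $\mathcal N_n$ is formally smooth and the $W^\perp$-component lifts exactly when it is integral. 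The component $y_2$ is an endomorphism-type quasi-homomorphism of the canonical lift. By Gross's formula for endomorphisms of $\mathcal E$ modulo $\varpi_{E_v}^{m+1}$, the maximal $m$ to which it lifts is $\tfrac12(v(q(y_2))+1)$ in $E_\nu$-units. The prefactor $2$ in \eqref{Nonarchimedean m series}, which converts $\log N_\nu$ into $\log N_v$, turns this into $v(q(y_2))+1$. Because the intersection is proper, $\mathcal N_0$ has dimension zero over $\Spf\mathcal O_{\breve E_\nu}$ and $\mathcal Z(y)$ is Cartier, so there is no higher Tor and the Euler characteristic is this length.

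Comparing the two formulas gives the first identity, and summing gives the second. The main obstacle is the exact bookkeeping of factors of $2$. These come from three sources: $\log N_\nu=2\log N_v$, the convention that $k$ in \cite{Guo1} carries a factor $\tfrac12$ relative to the Whittaker derivative, and the odd-valuation parity of $q(y_2)$ in the nearby space. I would settle these by checking the case $v(q(y_2))=1$, $y_1=0$ directly against the curve computation.
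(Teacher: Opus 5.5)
For the identity at $g=1$ your argument is correct, and it is the route the paper takes implicitly. The paper only displays the relative height-two formula for $m_{\Phi_v}$ and refers to \cite{Guo1} for $k_{\Phi_v}$. Your three ingredients are right:
\begin{enumerate}
\item The inert unramified Whittaker derivative gives $\tfrac12(v(q(y_2))+1)\log N_v$ on the odd-valuation range forced by $\epsilon(\BV_v)=1$.
\item Gross's formula \cite[Proposition 3.3]{Gro1}, applied to $y_2\in\Hom(\BE,\overline{\BE})$, gives intersection length $\tfrac12(v(q(y_2))+1)$, which is doubled by $\log N_\nu=2\log N_v$.
\item There is no higher Tor, because $\mathcal N_0\cong\Spf\mathcal O_{\breve E_\nu}$ is regular of dimension one and is not contained in $\mathcal Z(y)$ once $y_2\neq0$.
\end{enumerate}
There are two small slips. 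First, you cannot normalize by $W_u(0)$, since it vanishes for odd $v(u)$; normalize by the constant in front of the sum instead. Second, Remark~\ref{Kodaira-Spencer remark} plays no role here. Your two displays already balance exactly, so there is no further $\tfrac12$ to insert into $k$.

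The step that does not work as written is the passage to general $g$, which the second identity needs. Write $\Phi_v=\Phi_{1,v}\otimes\Phi_{2,v}$ along $W_v^\perp\oplus W_v$.
\begin{itemize}
\item On the geometric side $g$ enters by substitution: $m_{r(\tau)\Phi_v}(g,y)=m_{r(g,\tau)\Phi_v}(y)$.
\item On the analytic side $g$ is the argument of the $s$-derivative of the Whittaker function of the section $h\mapsto\delta(h)^s r(h)\Phi_{2,v}(0)$. Note the asymmetry in $\bar k_{r(\tau)\Phi_v}(g,y)=2k_{\Phi_v}(g,y)-m_{r(g)\Phi_v}(y)\log N_v$.
\item The derivative $\frac{d}{ds}W(s,g,\phi)$ is in general not $\frac{d}{ds}W(s,1,r(g)\phi)$, because $\delta(hg)^s\neq\delta(h)^s$.
\item Even granting the substitution, your explicit formulas cover only $\Phi_v=1_{\Lambda_v}$, not $r(g)\Phi_v$.
\end{itemize}

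The repair is the Iwasawa argument. Write $g=n(b)m(a)k$ with $k\in\UU(\mathcal O_{F_v})$.
\begin{itemize}
\item Both sides are right $k$-invariant, since $r(k)\Phi_v=\Phi_v$ and $\delta(gk)=\delta(g)$.
\item Both sides acquire the same factor $\psi(bq(y))$ under $n(b)$.
\item Under $m(a)$ both sides become the same multiple of their value at $ya$. On the analytic side, differentiating the $s$-dependent modulus factor in $W(s,m(a)g,\cdot)$ produces an extra multiple of $\log|a\bar a|_v\cdot W_{q(y_2)a\bar a}(0,g,\Phi_{2,v})$. This term vanishes by the local Siegel--Weil formula, because $q(y_2)a\bar a$ is not represented by $\BW_v$.
\end{itemize}
Since your $g=1$ formulas hold for every $y$, this gives $2k_{r(\tau)\Phi_v}(g,y)=m_{r(g,\tau)\Phi_v}(y)\log N_v$ for all $g$. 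Summing over $y\in V(v)-W^\perp$ then yields the second identity.
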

The factor $2$ is the normalization dictated by our modulus character, as in \cite{Guo1}; it is unrelated to the absolute dimension of the $p$-divisible group.

If $F_v/\QQ_p$ is ramified while $E_v/F_v$ remains unramified, the absolute RSZ deformation problem is no longer obtained by naively repeating the unramified-over-$\QQ_p$ theory.  Instead, \cite[Theorem 5.12 and Corollary 5.14]{LMZ} identifies it with the same relative Rapoport--Zink problem just used.  The relative canonical-lifting calculation and the local analytic formula therefore give the following statement.

\begin{proposition}\label{Ramified case}
Suppose that $\Phi$ satisfies the assumptions in \cite[Section 4.1]{Guo1}, that $v$ is inert in $E/F$, and that $F_v/\QQ_p$ is ramified. Then
\begin{equation*}
 2k_{\Phi_v}(1,y)-m_{\Phi_v}(y)\log N_v
 =c_{\Phi_v}(1,y)-\log|d_v|\,\Phi_v(y),
\end{equation*}
where $d_v$ is the different of $F_v/\QQ_p$ and $c_{\Phi_v}$ is defined in \cite[(3.1.5)]{Guo1}.
\end{proposition}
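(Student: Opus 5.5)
The identity is local at $v$, so I will prove it by comparing the two sides as explicit functions of $y=y_1+y_2$, with $y_1\in W_v^\perp$ and $y_2\in W_v$.

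\emph{Geometric side.} The geometric side $m_{\Phi_v}(y)$ is already computed above by the relative height-two deformation factor:
\[
m_{\Phi_v}(y)=(v(q(y_2))+1)\,1_{\Lambda_v^\perp}(y_1)\,1_{\mathcal O_{F_v}}(q(y_2)).
\]
The essential input is that this formula does not depend on whether $F_v/\QQ_p$ is ramified. The reason is that \cite[Theorem 5.12 and Corollary 5.14]{LMZ} reduce the absolute RSZ deformation problem to the relative Rapoport--Zink space $\mathcal N_n$ over $\mathcal O_{\breve E_\nu}$. The relative Gross canonical-lifting formula
\[
\End(\mathcal E \bmod \varpi^{m+1})=\mathcal O_{E_v}+\varpi^m\End(\mathcal E\bmod\varpi)
\]
then computes the derived length exactly as in the unramified case. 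Here $\log N_v$ is measured with the residue field of $F_v$, so no $\QQ_p$-absolute quantity enters on the geometric side. I will state this explicitly as the first step: the Euler characteristic in \eqref{Nonarchimedean m series} equals the displayed formula for every inert $v$.

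\emph{Analytic side.} The analytic side $k_{\Phi_v}(1,y)$ comes from \cite[Thm 3.3]{Guo1}. It is the local term $W'_{t,v}(0,1,\Phi_v)/W_{t,v}(0,1,\Phi_v)$ multiplied by the appropriate theta-type factor. I will take its explicit value from the computation in \cite[Section 4]{Guo1}, which holds for an inert place with self-dual $\Lambda_v$ and standard $\Phi_v$. The only place the ramification of $F_v/\QQ_p$ enters there is the self-dual Haar measure on $F_v$ and the additive character $\psi_v$. Since $\psi_v$ has conductor $\mathfrak d_v^{-1}$ rather than $\mathcal O_{F_v}$, the local Whittaker function acquires a factor $|d_v|^{s+\cdots}$. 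Differentiating at $s=0$ produces two terms:
\begin{enumerate}[(1)]
\item an extra multiple $-\log|d_v|\,\Phi_v(y)$, which is independent of the shape of $y$;
\item the discrepancy between the measures, which is recorded by $c_{\Phi_v}(1,y)$ defined in \cite[(3.1.5)]{Guo1}.
\end{enumerate}

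\emph{Comparison.} With both sides explicit, I will subtract $m_{\Phi_v}(y)\log N_v$, taken from Proposition \ref{Unramified case}, from $2k_{\Phi_v}(1,y)$. This reduces the statement to the following claim: apart from the conductor shift, $k$ has the same form as in the unramified case. To prove the claim, I will rescale $\psi_v$ by a generator of $\mathfrak d_v$, which transports the problem to the unramified conductor. I will then track the Weil-representation factor produced by this rescaling. For $g=1$, that factor only multiplies $\Phi_v$ by $|d_v|^{\ast}$, and its derivative at $s=0$ gives exactly $-\log|d_v|\Phi_v(y)$ plus the constant $c_{\Phi_v}(1,y)$.

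I expect the main obstacle to be this bookkeeping of normalizations. Specifically, I must confirm that the factor $2$ together with the self-dual measure leaves precisely $c_{\Phi_v}(1,y)-\log|d_v|\Phi_v(y)$ and nothing else. In particular, no $y$-dependent term may survive from the conductor shift of $q(y_2)$ inside $1_{\mathcal O_{F_v}}$. My first attempt will be to check the claimed identity on $y$ with $v(q(y_2))=0$, and then to use the linearity of both sides in $v(q(y_2))+1$.
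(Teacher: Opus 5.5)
Your geometric half is the paper's own argument. The comparison of \cite{LMZ} replaces the absolute RSZ problem by the relative height-two deformation problem over $\mathcal O_{\breve E_\nu}$, so $m_{\Phi_v}$ is unchanged. Every effect of the ramification of $F_v/\QQ_p$ therefore sits on the analytic side, which the paper imports from \cite[Proposition 6.1.7]{Qiu}.

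The gap is in the analytic step you supply yourself. In the derivative series, $k_{\Phi_v}(1,y)$ is, up to a normalizing constant, the $W^\perp$-component of $\Phi_v$ at $y_1$ times $W'_{q(y_2),v}(0,1,\cdot)$. This is the derivative of the Whittaker function of the \emph{incoherent} line $\BW_v$. Since $y_2$ lies in the nearby line $W(v)_v\not\cong\BW_v$, the value $q(y_2)$ is not represented by $\BW_v$, so $W_{q(y_2),v}(0,1,\cdot)=0$. This vanishing is exactly why only the place $v$ survives in the $v$-part of $I'(0,g,\Phi)$. Two consequences follow:
\begin{enumerate}[(1)]
\item $k$ is not a ratio $W'/W$.
\item A factor multiplying the whole local Whittaker function, which is what ``that factor only multiplies $\Phi_v$ by $|d_v|^{\ast}$'' amounts to, cannot create an additive term. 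If it has the form $|d_v|^{as}$, it contributes $a\log|d_v|\cdot W_{q(y_2),v}(0,1,\cdot)=0$ to the derivative. If it is independent of $s$, it only rescales $k$.
\end{enumerate}
Hence the step ``its derivative at $s=0$ gives exactly $-\log|d_v|\Phi_v(y)$ plus $c_{\Phi_v}(1,y)$'' fails as stated.

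Where the correction actually comes from depends on how \cite{Guo1} normalizes the maximal compact subgroup, $\delta(g)$, the self-dual measures and $c_{\Phi_v}$. For instance, with the untwisted maximal compact, the substitution $b\mapsto d_v^{-1}b$ does not transport the weight $\delta(wn(b))^s$ uniformly. The weight is genuinely different on the annulus $1<|b|_v\le|d_v|_v^{-1}$, and it acquires $|d_v|^{s}$ only beyond that annulus. These partial integrals do not vanish at $s=0$, so they do contribute additively to $W'$. This is precisely the ``conductor shift'' in the $t$-dependence that you hoped to show is harmless.

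In any case you need the explicit closed form of $W'_{t,v}(0,1,\cdot)$, from \cite[Lemma 4.8]{Guo1} or from the computation in \cite[Proposition 6.1.7]{Qiu} that the paper cites. You must compare it with $m_{\Phi_v}$ on supports as well as values, and against the definition \cite[(3.1.5)]{Guo1} of $c_{\Phi_v}$.

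Finally, your proposed base case $v(q(y_2))=0$ is empty. Self-duality of $\Lambda_v$ together with $d_{\BW}=d_{\BV}$ forces $\det\BW_v$ to have even valuation. Hence every nonzero $q(y_2)$ with $y_2\in W(v)_v$ has odd valuation. Compare closed formulas directly rather than extrapolating from a single value.
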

\begin{proof}
This is the relative formulation of the calculation appearing in \cite[Proposition 6.1.7]{Qiu}.  The comparison theorem of \cite{LMZ} supplies the passage from the absolute RSZ factor to the strict relative factor, including compatibility with Faltings duality; the resulting one-dimensional deformation calculation is unchanged.
\end{proof}

\subsubsection*{Ramified case}
Assume now that $v$ is ramified in $E/F$, and again put $V(v)$ for the nearby coherent Hermitian space.  Let $\mathcal N_n$ be the relative unitary Rapoport--Zink space attached to the chosen $\varpi_{E_v}$-modular or almost $\varpi_{E_v}$-modular parahoric.  In the $\varpi_{E_v}$-modular case this is the exotic-smooth relative space of \cite[Sections 6--7]{RSZ1}.  The Kudla--Rapoport divisor, the CM cycle, and the Hodge line used in the present local pairing are defined on the relative Rapoport--Zink/RSZ moduli problem itself.  In the almost $\varpi_{E_v}$-modular case we use the Eisenstein-ideal Hodge line of \cite{LMZ}; no universal hyperplane is assumed or needed on the original model.

Assume for simplicity that $\varpi_{E_v}^2=\varpi_{F_v}$.  The framing object $\BX_1$ for $\mathcal N_1$ is the Serre tensor $\mathcal O_{E_v}\otimes_{\mathcal O_{F_v}}\overline{\BE}$ in the category of strict $\mathcal O_{F_v}$-modules.  If $n$ is odd, set
\begin{equation*}
    \BX_n=\BX_1\times(\BE^2)^{\frac{n-1}{2}},
\end{equation*}
with product relative polarization $\lambda_n$, where the polarization on $\BE^2$ is
\begin{equation*}
    \lambda=
    \matrixx{0}{\lambda_{\BE}\alpha(\varpi_{E_v})}
            {-\lambda_{\BE}\alpha(\varpi_{E_v})}{0},
\end{equation*}
and all duals are Faltings duals.  If $n$ is even, set
\begin{equation*}
    \BX_n=\BX_1\times(\BE^2)^{\frac{n-2}{2}}\times\BE,
\end{equation*}
where the relative polarization on the last factor is scaled so that the induced Hermitian line has determinant $q(e_v)$.  These constructions give a morphism, on the relevant relative Rapoport--Zink component,
\begin{equation*}
    \mathcal N_1\longrightarrow\mathcal N_n.
\end{equation*}
Moreover,
\begin{equation}\label{two pieces decomposition ramified}
    \mathcal N_1=\mathcal N_1^+\coprod\mathcal N_1^-
\end{equation}
is the decomposition into open-and-closed components from \cite[Proposition 6.4]{RSZ1}.

The completion of the RSZ model along the corresponding basic locus is uniformized by this relative space through the comparison theorem of \cite{LMZ}.  In particular, after fixing the toric component and suppressing the harmless finite \'etale factors, it has the same form as \eqref{Uniformization of X}, with the present $\mathcal N_n$.  Kudla--Rapoport divisors are again defined by the lifting of a special quasi-homomorphism to an actual homomorphism.  The local CM cycle is represented by
\begin{equation*}
 \wt{\mathcal P}_{\wt\nu}
 =\frac{1}{2d_{\BW,U}}
 \U(W)(F)\backslash
 \big(\mathcal N_0^{c,\pm}\times\U(\BW_f^v)/U_\BW^v\big),
\end{equation*}
where $\mathcal N_0^{c,\pm}$ is the union of the images determined by the embeddings $c\alpha c^{-1}:E_v\hookrightarrow B$.

\begin{remark}\label{ramified P_n remark}
The auxiliary formal scheme denoted $\mathcal P_n$ in \cite[Section 9]{RSZ1} should not be confused with our CM cycle.  It is used to construct the morphisms between ramified Rapoport--Zink spaces and may fail to be regular after the ramified absolute base change; see \cite[Remark 6.8]{RSZ2}.  This is precisely why the local contribution in this paper is defined by a derived Euler--Poincar\'e characteristic rather than by assuming regularity of an absolute integral model.  The auxiliary space is used only to construct the required morphism between the ramified Rapoport--Zink spaces; it is not a substitute for the LMZ definition of the Hodge line.
\end{remark}

The weighted multiplicity is
\begin{equation*}
 m_{r(\tau)\Phi_v}(y)
 =\chi\!\left(
   \mathcal N_n,
   \mathcal O_{\mathcal Z(\tau^{-1}y)}
   \overset{\mathbf L}{\otimes}_{\mathcal O_{\mathcal N_n}}
   \mathcal O_{\mathcal N_0^{c,\pm}}
 \right),
\end{equation*}
This derived tensor product is taken on the relative Rapoport--Zink space.  It is the ramified analogue of \eqref{Nonarchimedean m series}; the series $\mathcal M_\Phi^{(v)}(g,\tau)$ is still defined by \eqref{nonarchimedean M series}.  The normalized local coefficient is
\begin{equation*}
 i_v(Z_t(\Phi))
 =\frac{1}{2L(1,\eta)}
 \int_{E^1\backslash E^1(\BA)}
 \sum_{y\in V(v)_t-W^\perp}
 m_{\Phi_v}(\tau^{-1}y)r(\tau)\Phi^v(y)\,d\tau.
\end{equation*}
The same relative height-two calculation gives
\begin{equation*}
 m_{\Phi_v}(y)
 =(v(q(y_2))+1)
 1_{\Lambda_v^\perp}(y_1)
 1_{\mathcal O_{F_v}}(q(y_2)).
\end{equation*}

\begin{proposition}\label{Superspecial case}
Suppose that $\Phi$ satisfies the assumptions in \cite[Section 4.1]{Guo1} and that $v$ is ramified in $E/F$. Then
\begin{equation*}
 2k_{\Phi_v}(1,y)-m_{\Phi_v}(y)\log N_v
 =c_{\Phi_v}(1,y)-\log|d_v|\,\Phi_v(y).
\end{equation*}
\end{proposition}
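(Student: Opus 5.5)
The plan is to split the identity into a geometric statement and an analytic one, and to compare them only after both sides are reduced to functions of the decomposition $y=y_1+y_2$, $y_1\in W_v^\perp$, $y_2\in W_v$. On the geometric side the multiplicity $m_{\Phi_v}(y)$ has already been reduced above to the relative height-two formula
\[
 m_{\Phi_v}(y)=(v(q(y_2))+1)\,1_{\Lambda_v^\perp}(y_1)\,1_{\mathcal O_{F_v}}(q(y_2)).
\]
I would first justify this in the ramified setting. Using the uniformization of the completed RSZ model by the relative space $\mathcal N_n$ via \cite{LMZ}, together with the decomposition \eqref{two pieces decomposition ramified}, the derived intersection $\mathcal O_{\mathcal Z(y)}\otimes^{\mathbf L}\mathcal O_{\mathcal N_0^{c,\pm}}$ splits into two steps. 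The $y_1$-component imposes only the integrality condition $y_1\in\Lambda_v^\perp$, because on the product framing object $\BX_1\times(\BE^2)^{\bullet}$ the $W^\perp$-factor deforms rigidly along the CM locus. The $y_2$-component is the quasi-canonical lifting problem for $\mathcal O_{E_v}$ in the division algebra $B$. Gross's formula for $\End(\mathcal E \bmod \varpi_{E_v}^{m+1})$ gives length $v(q(y_2))+1$, once the two components $\mathcal N_0^{c,\pm}$ are counted and the factor $\frac{1}{2}$ in $\wt{\mathcal P}_{\wt\nu}$ is absorbed. Here $\log N_\nu=\log N_v$, so no extra factor $2$ appears, unlike the inert case.

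On the analytic side I would compute $k_{\Phi_v}(1,y)$ directly from its definition in \cite[Thm 3.3]{Guo1}. This is the derivative of the local Whittaker function of the incoherent Eisenstein series at the ramified place, evaluated against $\Phi_v$ and the theta function of $W^\perp$. Under the assumptions of \cite[Section 4.1]{Guo1}, $\Phi_v$ factors as $1_{\Lambda_v^\perp}\otimes\phi_{2}$ along $W_v^\perp\oplus W_v$. The $y_1$-dependence therefore factors out as $1_{\Lambda_v^\perp}(y_1)$, and what remains is the rank-one ramified computation. That computation is exactly the one in \cite[Proposition 6.1.7]{Qiu} and \cite[Proposition 8.7]{YZZ2}, and it gives
\[
 2k_{\Phi_v}(1,y)=\bigl(v(q(y_2))+1\bigr)\log N_v\,\Phi_v(y)+c_{\Phi_v}(1,y)-\log|d_v|\,\Phi_v(y).
\]
The term $c_{\Phi_v}$ of \cite[(3.1.5)]{Guo1} comes from the derivative of the local normalizing factor (the $\gamma$-factor and the volume of the modular lattice), and the different term comes from the self-dual measure on $F_v$. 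Subtracting the geometric formula then yields the proposition.

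The main obstacle is the geometric step at the almost $\varpi_{E_v}$-modular level for even $n$. There $\mathcal N_n$ is not regular after absolute base change (Remark~\ref{ramified P_n remark}), so Tor-independence of $\mathcal Z(y)$ and the CM cycle cannot be assumed. To handle this, I would check that $\mathcal N_0^{c,\pm}$ is a regular $\Spf\mathcal O_{\breve E_\nu}$-point and that $\mathcal Z(y)$ is a relative Cartier divisor. The derived tensor product then reduces to the length of $\mathcal O_{\mathcal N_0}/(f_y)$, where $f_y$ is the local equation, so there are no higher Tor terms. This computation happens entirely on the height-two factor, where Gross's canonical lifting applies. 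The remaining risk is bookkeeping: matching the normalization of the last $\BE$-factor, scaled to determinant $q(e_v)$, with the lattice $\Lambda_v^\perp$ appearing in $\Phi_v$.
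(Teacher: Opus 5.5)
Your proposal takes the same route as the paper: the paper obtains the identity as the relative (LMZ, Faltings-dual) form of Qiu's local comparison, subtracting the height-two canonical-lifting multiplicity $(v(q(y_2))+1)1_{\Lambda_v^\perp}(y_1)1_{\mathcal O_{F_v}}(q(y_2))$ from the local Whittaker-derivative formula, and your Cartier-divisor argument for the absence of higher Tor is a sound way to make the derived multiplicity concrete. Two corrections: the ramified-in-$E/F$ computation to invoke is \cite[Proposition 6.1.9]{Qiu}, not 6.1.7 (which the paper uses for the inert case with $F_v/\QQ_p$ ramified), and by Assumption~\ref{Assumption}(2) any $v$ ramified in $E/F$ is unramified over $\QQ$, so $\log|d_v|=0$ and the different term vanishes here, as the paper notes.
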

\begin{proof}
This is the relative form of \cite[Proposition 6.1.9]{Qiu}.  Under Assumption~\ref{Assumption}, $|d_v|=1$ at the ramified places of $E/F$ occurring here.
\end{proof}

\subsubsection*{Split case}
Let $v$ split as $\nu_1\nu_2$ in $E$.  The $p$-divisible group and its Hodge filtration decompose into the two relative EL factors associated with $\nu_1$ and $\nu_2$, and the RSZ completion is described by the corresponding product of relative deformation spaces.  We define the two local contributions by the same derived tensor-product prescription as in \eqref{derived finite intersection}, and put
\begin{equation*}
    m_{\Phi_v}(y)
    =\frac12m_{\Phi_v,\nu_1}(y)
     +\frac12m_{\Phi_v,\nu_2}(y).
\end{equation*}
The factor $1/2$ compensates for summing over the two places above $v$.  There is no need to choose an ordinary-locus closure on a non-PEL integral model.

The proper part of the normalized local coefficient is
\begin{equation*}
\begin{aligned}
 i_v(Z_t(\Phi))
 &=i_{\nu_1}(Z_t(\Phi))+i_{\nu_2}(Z_t(\Phi))\\
 &=\frac{1}{2L(1,\eta)}
   \int_{E^1\backslash E^1(\BA)}
   \sum_{y\in W_t^\perp}
   m_{\Phi_v}(\tau^{-1}y)r(\tau)\Phi^v(y)\,d\tau.
\end{aligned}
\end{equation*}
As in \cite[Section 8.4]{YZZ2} and \cite[Lemma 8.4]{YZ1}, the summation is supported on the proper subspace $W^\perp$, because at a split place the local Hermitian datum decomposes into its two EL factors and there is no non-trivial nearby Hermitian space obtained by changing a Hasse invariant.

\begin{proposition}\label{Ordinary case}
Suppose that $\Phi$ satisfies the assumptions in \cite[Section 4.1]{Guo1}.  The proper split-place contribution is of the form
\begin{equation*}
 \sum_{y\in W^\perp\setminus\{0\}}
 r(g,\tau)\Phi^v(y)\,r(\tau)m_{r(g)\Phi_v}(y),
\end{equation*}
and is a degenerate pseudo-theta series.
\end{proposition}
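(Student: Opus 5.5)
The plan is to follow the split-place argument of \cite[Section 8.4]{YZZ2} and \cite[Lemma 8.4]{YZ1}, transported to the relative RSZ setting. There are two tasks. First, show that at a split place $v=\nu_1\nu_2$ the only vectors $y$ with a nonzero derived local multiplicity lie in $W^\perp$. Second, recognize the resulting series as a degenerate pseudo-theta series in the sense of \cite[Section 2]{Guo1}.

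For the support statement I will use the product decomposition of the completed RSZ model at $v$ into the two relative EL deformation spaces attached to $\nu_1$ and $\nu_2$. The CM cycle $\wt{\mathcal P}_{\wt\nu}$ reduces to an ordinary point, because a CM abelian scheme with $E$-action of the given type has ordinary reduction at a split place. The Serre--Tate coordinates then make the relevant $p$-divisible group a product of its étale and multiplicative EL parts. The strategy is as follows.
\begin{enumerate}
\item Any special homomorphism lifting over the completion of $\wt{\mathcal P}$ must respect the canonical-lift splitting.
\item Hence it defines a global special quasi-homomorphism lying in $\Hom_E^0(A_0,A)$ for the reduction.
\item Since at a split place the reduction of the CM point carries no additional quaternionic endomorphisms, this space is the coherent space $V$ itself, with no change of Hasse invariant.
\item Consequently every $y$ meeting $\mathcal P$ at $\nu_i$ lies in $V(E)$ and meets $P$ in the reduction.
\end{enumerate}
The orthogonality argument of Lemma~\ref{Vanish of improper intersection}, applied to the special fibre, then forces $y\perp W$, so $y\in W^\perp$.

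The summand is $r(g,\tau)\Phi^v(y)\cdot r(\tau)m_{r(g)\Phi_v}(y)$. Here the local factor is defined as the average $\frac12(m_{\Phi_v,\nu_1}+m_{\Phi_v,\nu_2})$ of the derived Euler characteristics on the relative deformation spaces, exactly as in \eqref{derived finite intersection}. Next I use the uniformization of the ordinary locus by the $U^v$-cosets, in analogy with \eqref{Local inert Zt divisor}. Integrating over $E^1\backslash E^1(\BA)$ then yields the displayed sum over $y\in W^\perp\setminus\{0\}$. The vector $y=0$ only enters the constant term and is excluded.

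To conclude, I check that $y\mapsto r(\tau)m_{r(g)\Phi_v}(y)$ is a locally constant function at $v$ satisfying the Weil-representation equivariance in $g$ required by \cite[Section 2.3]{Guo1}. This holds because the local intersection is invariant under the stabilizer $\U(W_v)$ and transforms compatibly under $r(g)$. Because the summation runs over the proper subspace $W^\perp$, the series is degenerate, that is, a pseudo-theta series for $W^\perp$.

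The main obstacle is the support step. I must justify that the derived intersection really vanishes for $y\notin W^\perp$, including possible vertical or excess contributions in the nonregular relative setting. I would first try to show that the closed formal subschemes $\mathcal Z(y)$ and $\wt{\mathcal P}$ have disjoint supports for $y\notin W^\perp$. Disjoint supports make the derived tensor product acyclic, which avoids computing any Tor terms.
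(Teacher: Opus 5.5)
Your outline is the paper's. There, the support of the split-place term in $W^\perp$ is taken from \cite[Section 8.4]{YZZ2} and \cite[Lemma 8.4]{YZ1}, with the stated reason that at a split place no nearby Hermitian space arises from a change of Hasse invariant. The series over the proper subspace $W^\perp$ is then recognized as a degenerate pseudo-theta series exactly as in Lemma~\ref{Vanish of improper intersection}. Your last step, and your idea of getting vanishing of the derived term from disjoint supports, are fine.

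Step~3 of your support argument, however, is wrong and contradicts your own conclusion. You identify the space of special quasi-homomorphisms at the reduction of the CM point with the coherent space $V$. If that were so, every suitably integral $y\in V$ would lift at the reduced CM point, so $\mathcal Z(y)$ would meet $\wt{\mathcal P}$, and nothing could force $y\in W^\perp$. Also, the argument of Lemma~\ref{Vanish of improper intersection} is a computation in the complex uniformization, so there is no ``special-fibre'' version of it to invoke.

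What ``no change of Hasse invariant'' actually gives is that no new homomorphisms appear on reduction. Write $A\sim A_0^n\times B$ $E$-linearly at the CM point, according to $V=W^\perp\oplus W$. Then:
\begin{enumerate}
\item $\End^0_E(\bar A_0)=E$, since $E$ has degree $2\dim A_0$ and is therefore self-centralizing.
\item $\Hom^0_E(\bar A_0,\bar B)=0$. The CM types of $A_0$ and $B$ differ exactly by exchanging $\iota$ and $\bar\iota$, which induce the two distinct places $\nu_1\neq\nu_2$ above the split place $v$. Hence the $\nu_1$-parts of $\bar A_0[p^\infty]$ and $\bar B[p^\infty]$ have different dimensions. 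But any nonzero $E$-linear homomorphism $\bar A_0\to\bar B$ would be an isogeny, which would preserve these dimensions.
\item Therefore $\Hom^0_E(\bar A_0,\bar A)=E^n\cong W^\perp$. This gives $\mathcal Z(y)\cap\wt{\mathcal P}=\emptyset$ at $v$ for $y\notin W^\perp$ directly, which is your disjoint-support claim.
\end{enumerate}
At a non-split place, by contrast, $\iota$ and $\bar\iota$ induce the same place, and $\Hom(\BE,\overline{\BE})\neq0$ enlarges $W^\perp$ to $V(v)$.

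This argument needs neither ordinarity nor Serre--Tate coordinates, which is just as well. Your claim that the CM point has ordinary reduction at a split place holds only for the relative strict $\mathcal O_{F_v}$-module. For $[F_v:\QQ_p]>1$, the absolute $\nu_1$-part of $A_0[p^\infty]$ is isoclinic of slope $|\Phi_{\nu_1}|/[F_v:\QQ_p]$, where $\Phi_{\nu_1}\subset\Phi$ is the set of embeddings inducing $\nu_1$, and so it need not be ordinary.
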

\begin{proof}
The support lies in the proper subspace $W^\perp$.  Hence the same argument as in Lemma~\ref{Vanish of improper intersection} identifies it as a degenerate pseudo-theta series. It is therefore retained only as a degenerate term until the complete automorphic combination is formed; the key lemma then shows that it does not enter the final non-degenerate comparison.
\end{proof}

\subsection{Proof of the main theorem}\label{Proof of the main theorem}
Now we prove the main theorem of this paper. Recall that in \cite[Theorem 3.3]{Guo1},
\begin{equation*}
        \begin{aligned}
            \Pr' I'(0,g,\Phi)= &-\sum_{v|\infty}\overline{I'}(0,g,\Phi)(v)-\sum_{v\nmid\infty\ \nonsplit}I'(0,g,\Phi)(v)\\
            &-c_1\sum_{y\in W^\perp}r(g)\Phi(y)-\sum_{v\nmid\infty}\sum_{y\in W^\perp}c_{\Phi_v}(g,y)r(g)\Phi^v(y) \\
            &+\sum_{y\in W^\perp}(2\log\delta_f(g_f)+\log\lvert q(y)\rvert_f)r(g)\Phi(y).
        \end{aligned}
\end{equation*}
Note that the second and third lines in the above expression are some degenerate pseudo-theta series. Thus, up to some degenerate pseudo-theta series, we have
\begin{equation*}
    \begin{aligned}
         &\Pr'I'(0,g,\Phi)+\wh{\mathcal{Z}}_{*}(g,\Phi)\cdot\mathcal{P}\\
         =&-\sum_{v|\infty}\avint_{E^1\backslash E^1(\BA)}\big(2\overline{\mathcal{K}}_\Phi^{(v)}(g,\tau)-2\mathcal{M}_\Phi^{(v)}(g,\tau)\big)d\tau\\
         &-\sum_{v\nmid\infty}\avint_{E^1\backslash E^1(\BA)}\big(2\mathcal{K}_\Phi^{(v)}(g,\tau)-\mathcal{M}_\Phi^{(v)}(g,\tau)\log N_v\big)d\tau\\
         &+\frac{[F:\QQ]}{n}E_*(0,g,\Phi).
    \end{aligned}
\end{equation*}
All these terms were defined before. Note that the last term is the error term computed at the end of Proposition \ref{extra term}. We also remind the reader that all summations are done according to the places of $F$, not $E$.

Every term  in the above expression is a pseudo-theta series or a pseudo-Eisenstein series, and each summation over $v$ is just a finite sum. We then have the following itemized result:
\begin{enumerate}
    \item If $v|\infty$, then from Proposition \ref{Comparison of archimedean K and M}, we have
    \begin{equation*}
    \begin{aligned}
        &-\sum_{v|\infty}\avint_{E^1\backslash E^1(\BA)}\big(2\overline{\mathcal{K}}_\Phi^{(v)}(g,\tau)-2\mathcal{M}_\Phi^{(v)}(g,\tau)\big)d\tau\\
        =&-[F:\QQ](-\sum_{i=1}^n\frac{1}{i}+\gamma+\log(4\pi))E_*(0,g,\Phi).
    \end{aligned}
    \end{equation*}
    \item If $v\nmid \infty$. In this case,  we have
    \begin{equation*}
    \begin{aligned}
        &\avint_{E^1\backslash E^1(\BA)}\big(2\mathcal{K}_\Phi^{(v)}(g,\tau)-\mathcal{M}_\Phi^{(v)}(g,\tau)\log N_v\big)d\tau\\
        =&\avint_{E^1\backslash E^1(\BA)}\sum_{y\in V(v)-W^\perp}r(g,\tau)\Phi^v(y)\bar{k}_{r(\tau)\Phi_v}(g,y) d\tau,
    \end{aligned}
    \end{equation*}
    where
    \begin{equation*}
        \bar{k}_{r(\tau)\Phi_v}(g,y)=2k_{\Phi_v}(g,y)-m_{r(g)\Phi_v}(y)\log N_v.
    \end{equation*}
    This is a non-degenerate pseudo-theta series.  At a split place the corresponding contribution is the degenerate series of Proposition~\ref{Ordinary case} and hence vanishes in the final comparison.  At a non-split place, Propositions~\ref{Unramified case}, \ref{Ramified case}, and~\ref{Superspecial case}, together with \cite[Lemma~4.8]{Guo1}, give an explicit formula for $\bar{k}_{r(\tau)\Phi_v}(1,y)$.  The only possible non-zero correction is supported at places where $F_v/\QQ_p$ is ramified, because $d_v$ is the different of $F_v/\QQ_p$ and $\log|d_v|=0$ otherwise.
\end{enumerate}

In summary, we have proved our main Theorem \ref{main theorem of arithmetic Siegel-Weil I}.

\

\noindent \small{School of mathematical sciences, Peking University, Beijing 100871, China}

\noindent \small{\it Email: ziqiguo0603@pku.edu.cn}

\end{document}